\documentclass[a4paper]{article}

\usepackage{geometry}
\usepackage{amsmath,amssymb,amsthm}
\usepackage{latexsym}
\usepackage{indentfirst}
\usepackage{graphicx}
\usepackage{mathrsfs}
\usepackage{chngcntr}
\usepackage{color}
\usepackage[colorlinks=true]{hyperref}

\numberwithin{equation}{section}

\newtheorem{theorem}{Theorem}[section]
\newtheorem{Cor}[theorem]{Corollary}
\newtheorem{lemma}[theorem]{Lemma}
\newtheorem{Pro}[theorem]{Proposition}

\newtheorem{Def}[theorem]{Definition}

\newcommand{\N}{\mathcal{N}}

\newcommand{\R}{\mathbb{R}}

\newcommand{\D}{\mathcal{D}}

\title{Existence and Concentration of Multiple Positive Solutions for a Logarithmic Fractional Schr\"odinger--Poisson System}

\author{Jiao Luo, Zhipeng Yang\thanks{Corresponding author: yangzhipeng326@163.com}}

\date{}

\AtEndDocument{%
  \par
  \bigskip
  \bigskip

  \noindent
  \textbf{Jiao Luo}\\[0.2em]
  \textsc{Department of Mathematics, Yunnan Normal University, Kunming, China}\\[0.3em]
  \textit{E-mail address}: \texttt{luojiao20230401@163.com}\\[1.5em]

  \noindent
  \textbf{Zhipeng Yang}\\[0.2em]
  \textsc{Department of Mathematics, Yunnan Normal University, Kunming, China}\\
  \textsc{Yunnan Key Laboratory of Modern Analytical Mathematics and Applications, Kunming, China}\\[0.3em]
  \textit{E-mail address}: \texttt{yangzhipeng326@163.com}%
}

\begin{document}

\maketitle

\begin{abstract}
We study a logarithmic fractional Schr\"odinger--Poisson system in \(\R^{3}\):
\begin{equation*}
\begin{cases}
\varepsilon^{2\alpha}(-\Delta)^{\alpha}u+V(x)u+\phi u=u\log u^{2}+|u|^{p-2}u, & \text{in }\R^{3},\\
\varepsilon^{2\alpha}(-\Delta)^{\alpha}\phi=u^{2}, & \text{in }\R^{3}.
\end{cases}
\end{equation*}
Here \(\alpha\in\bigl(\frac34,1\bigr)\), \(4<p<2_{\alpha}^{*}=\frac{6}{3-2\alpha}\), and \(V\) satisfies a global potential condition. Using a suitable Orlicz-type Banach space, we establish a \(C^{1}\) variational framework for the problem and combine the Nehari manifold method with Lusternik--Schnirelmann category theory. We then prove that, for every fixed \(\delta>0\) and all sufficiently small \(\varepsilon>0\), the system admits at least \(\operatorname{cat}_{M_{\delta}}(M)\) distinct positive solutions. Moreover, the maximum points of these solutions concentrate near the global minimum set of \(V\) as \(\varepsilon\to0\).
\end{abstract}

\vspace{3mm}
\noindent\textbf{Keywords:} Fractional Schr\"odinger--Poisson system; Lusternik--Schnirelmann category; logarithmic term.

\vspace{3mm}
\noindent\textbf{MSC2020 Mathematics Subject Classification:} 35J20; 35A01; 58E05.

\tableofcontents

\section{Introduction}

In this paper, we consider the logarithmic fractional Schr\"odinger--Poisson system
\begin{equation}\label{eq1.1}
\begin{cases}
\varepsilon^{2\alpha}(-\Delta)^{\alpha}u+V(x)u+\phi u=u\log u^{2}+|u|^{p-2}u, & \text{in }\R^{3},\\
\varepsilon^{2\alpha}(-\Delta)^{\alpha}\phi=u^{2}, & \text{in }\R^{3}.
\end{cases}
\end{equation}
Here \(\varepsilon>0\) is a small parameter, and \((-\Delta)^{\alpha}\) denotes the fractional Laplacian of order \(\alpha\in(0,1)\). Throughout the paper we assume
\[
\alpha\in\Bigl(\frac34,1\Bigr),
\qquad
4<p<2_{\alpha}^{*}:=\frac{6}{3-2\alpha},
\]
and
\[
(V)\qquad
V\in C^{1}(\R^{3},\R),
\qquad
V_{\infty}:=\lim_{|x|\to\infty}V(x)>V_{0}:=\inf_{x\in\R^{3}}V(x)>-1.
\]
Condition \((V)\) is a standard global potential assumption in semiclassical analysis and, in particular, guarantees that the minimum set of \(V\) is nonempty and compact.

System \eqref{eq1.1} combines three distinct features. The first is the fractional kinetic operator \((-\Delta)^\alpha\), which arises in models of anomalous diffusion and in fractional Schr\"odinger equations; see \cite{MR3002595,Chang_2013,MR2944369}. The second is the Poisson coupling, which gives rise to the additional nonlocal interaction term \(\phi u\); see, for example, \cite{Teng2016,MR3456743,MR3716931,MR4230968}. The third is the logarithmic nonlinearity, which appears in quantum mechanics, quantum optics, effective quantum gravity, transport theory, and Bose--Einstein condensation \cite{MR583902,MR719365,MR2740900}. In particular, the term
\[
u\log u^{2}
\]
leads to a variational structure markedly different from that associated with pure power nonlinearities. We also mention the sharp Euclidean \(L^{p}\)-Sobolev logarithmic inequality of Del Pino and Dolbeault \cite{MR1957678}, which highlights the role of logarithmic corrections in borderline interpolation phenomena.

Solving formally the second equation in \eqref{eq1.1}, one obtains
\[
\phi(x)=\varepsilon^{-2\alpha}(I_{2\alpha}*u^{2})(x),
\qquad
I_{2\alpha}(x)=\frac{A_{\alpha}}{|x|^{3-2\alpha}},
\]
so that \eqref{eq1.1} may be viewed as a fractional Schr\"odinger equation involving both a logarithmic term and a Hartree-type nonlocal interaction. After the standard semiclassical change of variables
\[
x\mapsto \varepsilon x,
\]
and still denoting the rescaled functions by \(u\) and \(\phi\), we arrive at
\begin{equation}\label{eq1.2}
\begin{cases}
(-\Delta)^\alpha u+V(\varepsilon x)u+\phi u=u\log u^{2}+|u|^{p-2}u, & \text{in }\R^{3},\\
(-\Delta)^\alpha\phi=u^{2}, & \text{in }\R^{3}.
\end{cases}
\end{equation}
The concentration behavior is therefore governed by the coefficient \(V(\varepsilon x)\) as \(\varepsilon\to0\).

The logarithmic Schr\"odinger equation has been extensively studied. Early results go back to Cazenave--Haraux \cite{MR583902} and Cazenave \cite{MR719365}. In the variational framework, d'Avenia, Montefusco and Squassina \cite{MR3195154}, Squassina and Szulkin \cite{MR3385171}, and Ji and Szulkin \cite{MR3451965} developed effective methods for logarithmic Schr\"odinger equations under periodic or asymptotically periodic assumptions. In the semiclassical setting, Alves and de Morais Filho \cite{MR3869846} proved the existence and concentration of positive solutions for
\[
-\varepsilon^{2}\Delta u+V(x)u=u\log u^{2}
\]
under a global potential condition. Later, Alves and Ji \cite{MR4048330} established concentration results via penalization, and in \cite{MR4097474} they obtained multiplicity of positive solutions. More recently, Alves and Ji \cite{MR4732981} constructed multi-peak positive solutions for a logarithmic Schr\"odinger equation by variational methods. In the fractional setting, Ji and Xue \cite{MR4466439} studied existence and concentration for a fractional logarithmic Schr\"odinger equation, while Alves and Ambrosio \cite{MR4698809} considered a fractional \(p\)-Laplacian logarithmic Schr\"odinger equation.

Fractional Schr\"odinger--Poisson equations have been widely studied in recent years. The existing literature includes results on ground states, multiplicity and concentration of positive solutions, normalized solutions, critical growth problems, discontinuous nonlinearities, and penalization methods; see, for instance, \cite{Teng2016,MR4870288,MR3456743,MR3716931,MR3994307,MR3996970,MR4072024,MR4427207,MR4559061,MR4803716,MR4230968}. In particular, under global assumptions on the potential, several authors proved multiplicity and concentration results for semiclassical solutions.
For logarithmic fractional Schr\"odinger--Poisson systems, however, much less is known. Tao and Li \cite{MR4782770} proved the existence of a positive solution in the saddle-like case, while Li and Tao \cite{MR4896434} established multi-bump solutions in the deepening potential well case. Related results based on penalization methods were further obtained in \cite{luo2025existence}. Nevertheless, as far as we know, Lusternik--Schnirelmann multiplicity for logarithmic fractional Schr\"odinger--Poisson systems under the global potential condition \((V)\) has not yet been considered.

A central question in semiclassical analysis is how the number of solutions reflects the topology of the set
\[
M=\{x\in\R^{3}:V(x)=V_{0}\},
\qquad
M_{\delta}=\{x\in\R^{3}:\operatorname{dist}(x,M)\le\delta\},
\]
where \(\delta>0\) is fixed. Under \((V)\), the set \(M\) is nonempty and compact. This perspective goes back to the local case and is closely related to the works of del Pino and Felmer \cite{MR1379196}, Cingolani and Lazzo \cite{MR1646619}, and Rabinowitz \cite{MR1162728}. In the present setting, several additional difficulties arise.

The first difficulty comes from the logarithmic term. Although the primitive
\[
F(s)=\frac12 s^{2}\log s^{2}-\frac12 s^{2},
\qquad s\in\R,
\]
extends to a \(C^{1}\) function on \(\R\), the functional
\[
u\mapsto \int_{\R^{3}}F(u)\,dx
\]
is not well defined on the whole space \(H^{\alpha}(\R^{3})\); indeed, it may take the value \(-\infty\). Thus the natural energy functional is not of class \(C^{1}\) on \(H^{\alpha}(\R^{3})\). Following \cite{MR3385171,MR3451965,MR3869846}, we decompose
\[
\frac12 s^{2}\log s^{2}=F_{2}(s)-F_{1}(s),
\]
where \(F_{1}\) is convex and \(F_{2}\in C^{1}(\R)\). This leads to the Orlicz-type space
\[
X_{\varepsilon}=H_{\varepsilon}\cap L^{F_{1}}(\R^{3}),
\]
where
\[
H_{\varepsilon}
=
\left\{
u\in H^{\alpha}(\R^{3}):
\int_{\R^{3}}V(\varepsilon x)u^{2}\,dx<\infty
\right\},
\]
and to the functional
\[
\begin{aligned}
J_{\varepsilon}(u)
&=
\frac12\int_{\R^{3}}
\Bigl(
|(-\Delta)^{\frac{\alpha}{2}}u|^{2}
+(V(\varepsilon x)+1)|u|^{2}
\Bigr)\,dx
+\frac14\int_{\R^{3}}\phi_{u}^{\alpha}u^{2}\,dx \\
&\quad
+\int_{\R^{3}}F_{1}(u)\,dx
-\int_{\R^{3}}F_{2}(u)\,dx
-\frac1p\int_{\R^{3}}|u|^{p}\,dx,
\end{aligned}
\]
which is of class \(C^{1}\) on \(X_{\varepsilon}\).

The second difficulty is due to the Poisson term. If \(\phi_{u}^{\alpha}\) denotes the unique solution of
\[
(-\Delta)^{\alpha}\phi=u^{2}
\qquad \text{in }\R^{3},
\]
then
\[
\phi_{u}^{\alpha}(x)=C_{\alpha}\int_{\R^{3}}\frac{u^{2}(y)}{|x-y|^{3-2\alpha}}\,dy,
\]
and the corresponding contribution to the energy is quartic:
\[
\int_{\R^{3}}\phi_{u}^{\alpha}u^{2}\,dx.
\]
By the Hardy--Littlewood--Sobolev inequality and the embedding
\[
H^{\alpha}(\R^{3})\hookrightarrow L^{\frac{12}{3+2\alpha}}(\R^{3}),
\]
we have
\[
\int_{\R^{3}}\phi_{u}^{\alpha}u^{2}\,dx
\le
C\|u\|_{\frac{12}{3+2\alpha}}^{4}.
\]
This is one reason for assuming
\[
\alpha>\frac34.
\]
The same restriction also enters the decay analysis, since the Bessel kernel associated with \((-\Delta)^{\alpha}+\mu\) belongs to \(L^{2}(\R^{3})\) for \(\alpha>\frac34\).

The third difficulty is the lack of compactness on \(\R^{3}\). Even after restricting the functional to the Nehari manifold
\[
\mathcal N_{\varepsilon}
=
\left\{
u\in X_{\varepsilon}\setminus\{0\}:
\langle J_{\varepsilon}'(u),u\rangle=0
\right\},
\]
one still needs to rule out vanishing, dichotomy, and escape of mass to infinity. A key role is played by the autonomous energy levels
\[
c_{\mu}
=
\inf_{u\in X\setminus\{0\}}\max_{t\ge0}J_{\mu}(tu),
\qquad \mu>-1,
\]
where
\[
X:=H^{\alpha}(\R^{3})\cap L^{F_{1}}(\R^{3}),
\]
and
\[
J_{\mu}(u)
=
\frac12\int_{\R^{3}}
\Bigl(
|(-\Delta)^{\frac{\alpha}{2}}u|^{2}
+(\mu+1)|u|^{2}
\Bigr)\,dx
+\frac14\int_{\R^{3}}\phi_{u}^{\alpha}u^{2}\,dx
+\int_{\R^{3}}F_{1}(u)\,dx
-\int_{\R^{3}}F_{2}(u)\,dx
-\frac1p\int_{\R^{3}}|u|^{p}\,dx.
\]
The strict inequality
\[
c_{V_{0}}<c_{V_{\infty}}
\]
provides the compactness threshold needed in the variational argument.

A further issue is positivity. Since our multiplicity argument is carried out on low-energy levels of \(\left.J_{\varepsilon}\right|_{\mathcal N_{\varepsilon}}\), it is not enough to produce a single positive ground state. One must also show that every low-energy critical point is positive. To this end, we prove that such critical points, after suitable translations, converge strongly to a positive ground state of the autonomous limit problem. We then combine local uniform convergence, uniform \(L^\infty\)-bounds, and uniform decay at infinity to exclude sign changes.

Unlike recent works based on penalization methods such as \cite{MR4230968,luo2025existence}, our approach works directly with a \(C^{1}\) functional on an Orlicz-type space and combines the Nehari manifold method with Lusternik--Schnirelmann category theory. Rather than modifying the nonlinearity outside suitable regions, we exploit the decomposition of the logarithmic term, the geometry of the Nehari manifold, and the compactness threshold
\[
c_{V_{0}}<c_{V_{\infty}}
\]
to obtain multiplicity and concentration of positive solutions from the topology of the global minimum set \(M\).

For simplicity, we carry out the proof for the rescaled problem \eqref{eq1.2}. By the inverse change of variables, the corresponding conclusions immediately yield solutions of the original system \eqref{eq1.1}. Our main result is as follows.

\begin{theorem}\label{thm1.1}
Assume that \(\alpha\in\bigl(\frac34,1\bigr)\), \(4<p<2_\alpha^*\), and \((V)\) holds. Then, for every fixed \(\delta>0\), there exists \(\varepsilon_1=\varepsilon_1(\delta)>0\) such that, for every \(\varepsilon\in(0,\varepsilon_1)\), problem \eqref{eq1.1} has at least \(\operatorname{cat}_{M_\delta}(M)\) distinct positive solutions. Moreover, if \((u_{\varepsilon},\phi_{\varepsilon})\) is one of these positive solutions and \(\eta_{\varepsilon}\in\R^{3}\) is a global maximum point of \(u_{\varepsilon}\), then
\[
\lim_{\varepsilon\to0}V(\eta_{\varepsilon})=V_{0}.
\]
\end{theorem}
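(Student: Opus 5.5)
We argue on the rescaled problem \eqref{eq1.2} and follow the Benci--Cerami/Cingolani--Lazzo scheme on the Nehari manifold \(\N_\varepsilon\subset X_\varepsilon\). First we set up the framework. \(J_\varepsilon\in C^1(X_\varepsilon)\) by the decomposition \(F=F_2-F_1\) with \(F_1\) convex. For each \(u\neq0\) the fibering map \(t\mapsto J_\varepsilon(tu)\) has exactly one maximum point \(t_u>0\). This uses \(p>4\) and the fact that \(t\mapsto t^{-4}\bigl(\int F_2(tu)-\int F_1(tu)+\frac1p\int|tu|^p\bigr)\) is eventually strictly increasing; the logarithm only contributes \(\frac12 t^2\log t^2\int u^2\), which is dominated by the \(t^p\) term. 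Hence \(\N_\varepsilon\) is a natural constraint, bounded away from \(0\) (we use \(F_2(s)\le C|s|^q\) near \(0\) and \(V_0>-1\)). The map \(u\mapsto t_uu\) is a homeomorphism from the unit sphere onto \(\N_\varepsilon\). In the Szulkin--Weth form, critical points of the reduced functional \(\Psi_\varepsilon(w)=J_\varepsilon(t_ww)\) on the sphere correspond to critical points of \(J_\varepsilon\). We then treat the autonomous problems \(J_\mu\). We show that \(c_\mu\) is attained by a positive ground state; for this we use Lions-type concentration compactness in \(X\) and the \(L^{F_1}\) convergence via Brezis--Lieb for the convex part. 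We also show that \(\mu\mapsto c_\mu\) is strictly increasing and continuous, which gives \(c_{V_0}<c_{V_\infty}\).

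The second step is compactness. We show that \(J_\varepsilon|_{\N_\varepsilon}\) satisfies the Palais--Smale condition at every level \(c<c_{V_\infty}\). Let \((u_n)\) be such a sequence. It is bounded in \(X_\varepsilon\): the identity \(J_\varepsilon(u)-\frac14\langle J_\varepsilon'(u),u\rangle\) controls \(\|u\|^2+\int u^2\), with the log term handled by \(\frac14\int u^2\). The bound in \(L^{F_1}\) then follows. Let \(u_n\rightharpoonup u\). We split \(v_n=u_n-u\), and the Brezis--Lieb splittings hold for each term, including the Poisson term (nonlocal Brezis--Lieb with HLS, using \(\alpha>3/4\)). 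If \(v_n\not\to0\), then mass escapes to infinity, where \(V(\varepsilon x)\to V_\infty\). Projecting \(v_n\) onto the Nehari manifold of \(J_{V_\infty}\) gives a level \(\ge c_{V_\infty}-o(1)\), a contradiction. Strong convergence in \(L^{F_1}\) then follows from the convergence of \(\int F_1(u_n)\) and convexity (a \(\Delta_2\)-type argument).

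Third comes the topology. Fix a positive ground state \(w\) of \(J_{V_0}\) and a cutoff \(\eta\). Define
\[
\Phi_\varepsilon(y)=t_{\varepsilon,y}\,\eta(\varepsilon x-y)\,w\Bigl(x-\tfrac{y}{\varepsilon}\Bigr),\qquad y\in M,
\]
projected onto \(\N_\varepsilon\). We show \(J_\varepsilon(\Phi_\varepsilon(y))\to c_{V_0}\) uniformly in \(y\in M\). Next take the barycenter \(\beta_\varepsilon(u)=\int\chi(\varepsilon x)u^2\,dx/\int u^2\,dx\), with \(\chi\) the identity truncated at a large radius. Then \(\beta_\varepsilon\circ\Phi_\varepsilon\to\mathrm{id}\) uniformly on \(M\). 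The key lemma is the converse. If \(u_n\in\N_{\varepsilon_n}\), \(\varepsilon_n\to0\) and \(J_{\varepsilon_n}(u_n)\to c_{V_0}\), then there are \(\tilde y_n\) such that \(u_n(\cdot+\tilde y_n)\) converges strongly, and \(\varepsilon_n\tilde y_n\to y\in M\). This uses the Lions lemma for nonvanishing, the autonomous comparison to get \(c_{V(y)}\le c_{V_0}\), and monotonicity of \(c_\mu\). It gives \(\beta_\varepsilon(\widetilde\N_\varepsilon)\subset M_\delta\), where \(\widetilde\N_\varepsilon=\{u\in\N_\varepsilon:J_\varepsilon(u)\le c_{V_0}+h(\varepsilon)\}\) and \(h(\varepsilon)\to0\). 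Standard category arguments then yield \(\operatorname{cat}(\widetilde\N_\varepsilon)\ge\operatorname{cat}_{M_\delta}(M)\). Since \(c_{V_0}+h(\varepsilon)<c_{V_\infty}\), Lusternik--Schnirelmann theory on the sphere gives at least \(\operatorname{cat}_{M_\delta}(M)\) critical points.

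The main obstacle is the last step: positivity of every low-energy critical point, and concentration. We cannot simply work with \(u^+\), because the multiplicity comes from the whole sublevel set. By the key lemma, \(v_n=u_n(\cdot+\tilde y_n)\) converges in \(X\) to a ground state of \(J_{V_0}\). We take that limit positive, and we must show that the ground states of \(J_{V_0}\) do not change sign: \(|w|\) has the same energy, and \(w\) does not change sign by the strong maximum principle for \((-\Delta)^\alpha\) together with \(\phi\ge0\). We then need uniform \(L^\infty\) bounds via Moser iteration, using \(s\log s^2\le C|s|^{q-1}\) for large \(s\). We also need uniform decay \(v_n(x)\to0\) as \(|x|\to\infty\), uniformly in \(n\). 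For this we compare with the Bessel kernel of \((-\Delta)^\alpha+\mu\), which lies in \(L^2\) because \(\alpha>3/4\); near zero the logarithm makes \(u\log u^2\) much smaller than \(u\), so it acts like an absorbing potential. Next, \(v_n^-\) satisfies an inequality whose nonlinearity is small where \(|v_n|\) is small. Testing with \(v_n^-\) and using the local uniform convergence to a positive limit, plus smallness at infinity, forces \(v_n^-=0\). Finally, the decay and the positive lower bound of the limit near the origin show that a global maximum \(\eta_n\) of \(u_n\) lies in \(\tilde y_n+B_R\). Hence \(\varepsilon_n\eta_n\to y\in M\), and undoing the scaling gives \(V(\eta_\varepsilon)\to V_0\) for \eqref{eq1.1}.
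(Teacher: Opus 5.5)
Your overall architecture matches the paper's: Nehari manifold, $c_{V_0}<c_{V_\infty}$, compactness below $c_{V_\infty}$, the maps $\Phi_\varepsilon$ and $\beta$, the concentration lemma, Moser iteration with Bessel-kernel decay, and localisation of maxima. Your local variants are legitimate substitutes for the corresponding steps of the paper: the Szulkin--Weth reduction to the unit sphere, the weight $u^2$ in the barycenter, and testing with $v_n^-$ instead of evaluating the equation at a negative minimum (which is cleaner).

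\textbf{First gap: counting positive solutions.} $J_\varepsilon$ is even, so whenever $u$ is a critical point in $\widetilde{\mathcal N}_\varepsilon$, so is $-u$. ``Positivity of every low-energy critical point'' is therefore false. What your argument actually gives, after ``we take that limit positive'', is only that every such critical point has a strict sign. The $\operatorname{cat}_{M_\delta}(M)$ critical points produced by Lusternik--Schnirelmann theory may come in pairs $\pm u$, which yields only $\lceil\operatorname{cat}_{M_\delta}(M)/2\rceil$ distinct positive solutions. You need one more step, and either of the following works.
\begin{itemize}
\item Replace $F_2(u)$ and $|u|^p$ by $F_2(u^+)$ and $(u^+)^p$. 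Testing with $u^-$, and using $F_1'(s)s\ge0$ and $\phi_u^\alpha\ge0$, then makes every critical point nonnegative. Contrary to your remark, the category argument runs unchanged for this truncated functional.
\item Use your concentration lemma and the strict sign of ground states. For small $\varepsilon$ and small $\kappa>0$, $\widetilde{\mathcal N}_\varepsilon$ is the disjoint union of the closed sets $\mathcal A_\varepsilon=\{u:\|u^-\|_p\le\kappa\}$ and $-\mathcal A_\varepsilon$, with $\Phi_\varepsilon(M)\subset\mathcal A_\varepsilon$. This gives $\operatorname{cat}(\widetilde{\mathcal N}_\varepsilon)\ge 2\operatorname{cat}_{M_\delta}(M)$.
\end{itemize}
The paper's Lemma~\ref{lem5.6} has exactly this defect: as stated it is contradicted by $-u$. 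So it cannot be quoted to close the gap.

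\textbf{Second gap: uniqueness on the fibers.} Several steps rely on a unique critical point of $t\mapsto J_\varepsilon(tu)$: the homeomorphism $S\to\mathcal N_\varepsilon$, the identity $c_\varepsilon=\inf_{\mathcal N_\varepsilon}J_\varepsilon=\inf_u\max_tJ_\varepsilon(tu)$, the projection onto $\mathcal N_{V_\infty}$ in your (PS) argument, and the comparison in your key lemma. ``Eventually strictly increasing'' does not give uniqueness. With $s=t^2$,
\[
\frac1t\,\frac{d}{dt}J_\varepsilon(tu)=K+Bs-L\log s-Ps^{\frac{p-2}{2}},
\]
where $B=\int\phi_u^\alpha u^2$, $L=\|u\|_2^2$, $P=\|u\|_p^p$, and $K=\|(-\Delta)^{\frac\alpha2}u\|_2^2+\int V(\varepsilon x)u^2-\int u^2\log u^2$.
\begin{itemize}
\item The quantity $s$ times the $s$-derivative, namely $Bs-L-\frac{p-2}{2}Ps^{\frac{p-2}{2}}$, is concave. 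So this function can decrease, then increase, then decrease again.
\item Already for the autonomous $J_\mu$, with $\alpha$ near $1$, $p$ near $4$, a suitable $\mu>-1$ and $u$ a suitably dilated Gaussian, it has three zeros.
\end{itemize}
So uniqueness fails in general. It has to be replaced, for instance by an argument confined to low-energy elements of $\mathcal N_\varepsilon$.

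The paper's proof of Lemma~\ref{lem3.3} does not help here: it places $t^2\int\phi_u^\alpha u^2$ on the wrong side of the fibering identity.

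Your boundedness step has a similar problem. The identity
\[
J_\varepsilon(u)-\tfrac14\langle J_\varepsilon'(u),u\rangle=\tfrac14\|u\|_{H_\varepsilon}^2+\tfrac14\int u^2(1-\log u^2)+\bigl(\tfrac14-\tfrac1p\bigr)\int|u|^p
\]
does not control $\|u\|_{H_\varepsilon}$ when $p$ is close to $4$. Boundedness of (PS) sequences therefore also needs a real argument; the paper only asserts it as standard.
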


The paper is organized as follows. In Section 2, we recall the basic facts on Orlicz spaces and fractional Sobolev spaces and introduce the variational framework for problem \eqref{eq1.2}. In Section 3, we study the Nehari manifold and prove the existence of a ground state solution. In Section 4, we analyze the autonomous problem and derive the concentration tools used later. In Section 5, we prove positivity for low-energy critical points, establish the multiplicity of positive solutions via Lusternik--Schnirelmann theory, and complete the proof of Theorem~\ref{thm1.1}.

\section{Preliminaries and Variational Framework}

In this section, we briefly recall the basic facts on Orlicz spaces and fractional Sobolev spaces that will be used later, and then introduce the variational framework for problem \eqref{eq1.2}; see \cite{MR2424078,MR2271234,MR1113700,MR2944369} for background.

\subsection{Orlicz spaces and fractional Sobolev spaces}

\begin{Def}\label{def2.1}
A continuous function \(\Phi:\R\to[0,+\infty)\) is called an \(N\)-function if
\begin{itemize}
    \item[(i)] \(\Phi\) is convex;
    \item[(ii)] \(\Phi(t)=0\) if and only if \(t=0\);
    \item[(iii)] \(\displaystyle \lim_{t\to0^+}\frac{\Phi(t)}{t}=0\) and \(\displaystyle \lim_{t\to+\infty}\frac{\Phi(t)}{t}=+\infty\);
    \item[(iv)] \(\Phi\) is even.
\end{itemize}
\end{Def}

We say that an \(N\)-function \(\Phi\) satisfies the \(\Delta_2\)-condition, and write \(\Phi\in(\Delta_2)\), if there exists \(k>0\) such that
\[
\Phi(2t)\le k\,\Phi(t)
\qquad \forall\, t\ge0.
\]
Its conjugate \(N\)-function is defined by
\[
\widetilde{\Phi}(s)=\sup_{t\ge0}\{|s|t-\Phi(t)\},
\qquad s\in\R.
\]
Then \(\widetilde{\Phi}\) is also an \(N\)-function, and \(\widetilde{\widetilde{\Phi}}=\Phi\).

Let \(\Omega\subset\R^N\) be an open set. The Orlicz space associated with \(\Phi\) is
\[
L^\Phi(\Omega)
=
\left\{
u\in L^1_{\mathrm{loc}}(\Omega):
\int_\Omega \Phi\!\left(\frac{|u|}{\lambda}\right)\,dx<+\infty
\text{ for some }\lambda>0
\right\},
\]
endowed with the Luxemburg norm
\[
\|u\|_\Phi
=
\inf\left\{
\lambda>0:
\int_\Omega \Phi\!\left(\frac{|u|}{\lambda}\right)\,dx\le1
\right\}.
\]
When \(\Omega=\R^3\), we simply write \(L^\Phi\).

We shall use Young's inequality
\[
|st|\le \Phi(t)+\widetilde{\Phi}(s),
\qquad \forall\, s,t\in\R,
\]
and H\"older's inequality
\[
\left|\int_\Omega uv\,dx\right|
\le 2\|u\|_\Phi\|v\|_{\widetilde{\Phi}},
\qquad
\forall\, u\in L^\Phi(\Omega),\ v\in L^{\widetilde{\Phi}}(\Omega).
\]

If \(\Phi,\widetilde{\Phi}\in(\Delta_2)\), then \(L^\Phi(\Omega)\) is reflexive and separable, and
\[
L^\Phi(\Omega)
=
\left\{
u\in L^1_{\mathrm{loc}}(\Omega):
\int_\Omega \Phi(|u|)\,dx<+\infty
\right\}.
\]
Moreover,
\[
u_n\to u \text{ in }L^\Phi(\Omega)
\quad\Longleftrightarrow\quad
\int_\Omega \Phi(|u_n-u|)\,dx\to0.
\]

We also recall the following standard criterion. Let \(\Phi\) be a \(C^1\) \(N\)-function with conjugate \(\widetilde{\Phi}\). Assume that
\begin{equation}\label{eq2.1}
1<l\le \frac{\Phi'(t)t}{\Phi(t)}\le m<\infty,
\qquad t>0.
\end{equation}
Then \(\Phi,\widetilde{\Phi}\in(\Delta_2)\).

Finally, define
\[
\xi_0(t)=\min\{t^l,t^m\},
\qquad
\xi_1(t)=\max\{t^l,t^m\},
\qquad t\ge0.
\]
Under \eqref{eq2.1}, one has
\begin{equation}\label{eq2.2}
\xi_0(\|u\|_\Phi)
\le
\int_\Omega \Phi(|u|)\,dx
\le
\xi_1(\|u\|_\Phi),
\qquad
\forall\, u\in L^\Phi(\Omega).
\end{equation}

For \(\alpha\in(0,1)\), the fractional Sobolev space \(H^\alpha(\R^3)=W^{\alpha,2}(\R^3)\) is defined by
\[
H^\alpha(\R^3)
=
\left\{
u\in L^2(\R^3):
\int_{\R^3}
\bigl(|\xi|^{2\alpha}|\mathcal F(u)(\xi)|^2+|\mathcal F(u)(\xi)|^2\bigr)\,d\xi
<\infty
\right\},
\]
with norm
\[
\|u\|_{H^\alpha(\R^3)}^2
=
\int_{\R^3}
\bigl(|\xi|^{2\alpha}|\mathcal F(u)(\xi)|^2+|\mathcal F(u)(\xi)|^2\bigr)\,d\xi.
\]
We denote by
\[
[u]_{H^\alpha(\R^3)}
=
\left(
\iint_{\R^3\times\R^3}
\frac{|u(x)-u(y)|^2}{|x-y|^{3+2\alpha}}
\,dx\,dy
\right)^{\frac12}
\]
the Gagliardo seminorm. The homogeneous space \(\D^{\alpha,2}(\R^3)\) is the completion of \(C_0^\infty(\R^3)\) with respect to this seminorm, namely,
\[
\D^{\alpha,2}(\R^3)
=
\overline{C_0^\infty(\R^3)}^{\, [\cdot]_{H^\alpha(\R^3)}},
\]
and we equip it with the norm
\[
\|u\|_{\D^{\alpha,2}(\R^3)}=[u]_{H^\alpha(\R^3)}.
\]

Set
\[
2_\alpha^*=\frac{6}{3-2\alpha}.
\]
Then
\[
\D^{\alpha,2}(\R^3)\hookrightarrow L^{2_\alpha^*}(\R^3)
\]
continuously, and
\[
S_\alpha
=
\inf_{u\in \D^{\alpha,2}(\R^3)\setminus\{0\}}
\frac{\|u\|_{\D^{\alpha,2}(\R^3)}^2}{\|u\|_{2_\alpha^*}^2}
\]
is well defined. Moreover,
\[
H^\alpha(\R^3)\hookrightarrow L^q(\R^3)
\quad\text{continuously for all } q\in[2,2_\alpha^*],
\]
and
\[
H^\alpha(\R^3)\hookrightarrow L^q_{\mathrm{loc}}(\R^3)
\quad\text{compactly for all } q\in[2,2_\alpha^*).
\]

For \(u\in\mathcal S(\R^3)\), the fractional Laplacian is defined by
\[
\mathcal F\bigl((-\Delta)^\alpha u\bigr)(\xi)
=
|\xi|^{2\alpha}\mathcal F(u)(\xi),
\qquad \xi\in\R^3,
\]
where
\[
\mathcal F(\phi)(\xi)
=
\frac{1}{(2\pi)^{\frac32}}
\int_{\R^3}e^{-i\xi\cdot x}\phi(x)\,dx,
\qquad \phi\in\mathcal S(\R^3).
\]
It also admits the representation
\[
(-\Delta)^\alpha u(x)
=
-\frac12 C(\alpha)
\int_{\R^3}
\frac{u(x+y)+u(x-y)-2u(x)}{|y|^{3+2\alpha}}
\,dy,
\qquad x\in\R^3,
\]
where
\[
C(\alpha)
=
\left(
\int_{\R^3}\frac{1-\cos \xi_1}{|\xi|^{3+2\alpha}}\,d\xi
\right)^{-1},
\qquad
\xi=(\xi_1,\xi_2,\xi_3).
\]
By Plancherel's formula, there exists \(c_\alpha>0\) such that
\[
[u]_{H^\alpha(\R^3)}^2
=
c_\alpha\left\|(-\Delta)^{\frac{\alpha}{2}}u\right\|_2^2.
\]
Hence the following norms are equivalent on \(H^\alpha(\R^3)\):
\[
u\mapsto
\left(
\|u\|_2^2+[u]_{H^\alpha(\R^3)}^2
\right)^{\frac12},
\]
\[
u\mapsto
\left(
\int_{\R^3}
\bigl(|\xi|^{2\alpha}|\mathcal F(u)(\xi)|^2+|\mathcal F(u)(\xi)|^2\bigr)\,d\xi
\right)^{\frac12},
\]
and
\[
u\mapsto
\left(
\int_{\R^3}
\left(
|u|^2+\bigl|(-\Delta)^{\frac{\alpha}{2}}u\bigr|^2
\right)\,dx
\right)^{\frac12}.
\]

\subsection{The logarithmic decomposition and the working space}

To avoid confusion with the fixed neighbourhood \(M_\delta\) in Theorem~\ref{thm1.1}, we denote by \(\delta_0\in(0,e^{-3/2})\) the truncation parameter used below.

Consider
\[
F(t)=\int_0^t s\log s^2\,ds
=
\frac12 t^2\log t^2-\frac12 t^2,
\qquad t\in\R.
\]
Following \cite{MR3869846,MR4732981,MR3451965,MR3385171}, define
\[
F_1(s)=
\begin{cases}
0, & s=0,\\[0.4em]
-\dfrac12 s^2\log s^2, & 0<|s|<\delta_0,\\[0.4em]
-\dfrac12 s^2(\log \delta_0^2+3)+2\delta_0|s|-\dfrac{\delta_0^2}{2}, & |s|\ge\delta_0,
\end{cases}
\]
and
\[
F_2(s)=
\begin{cases}
0, & |s|<\delta_0,\\[0.4em]
\dfrac12 s^2\log\!\left(\dfrac{s^2}{\delta_0^2}\right)+2\delta_0|s|-\dfrac32 s^2-\dfrac{\delta_0^2}{2}, & |s|\ge\delta_0,
\end{cases}
\]
for all \(s\in\R\). Then
\[
F_2(s)-F_1(s)=\frac12 s^2\log s^2,
\qquad
F_2'(s)-F_1'(s)=s\log s^2+s,
\qquad \forall\, s\in\R,
\]
where
\[
F_1'(s)=
\begin{cases}
0, & s=0,\\[0.4em]
-(\log s^2+1)s, & 0<|s|<\delta_0,\\[0.4em]
-(\log \delta_0^2+3)s+2\delta_0\operatorname{sgn}(s), & |s|\ge\delta_0,
\end{cases}
\]
and
\[
F_2'(s)=
\begin{cases}
0, & |s|<\delta_0,\\[0.4em]
s\log\!\left(\dfrac{s^2}{\delta_0^2}\right)-2s+2\delta_0\operatorname{sgn}(s), & |s|\ge\delta_0.
\end{cases}
\]

The functions \(F_1\) and \(F_2\) satisfy the following standard properties:
\begin{itemize}
    \item[\((f_1)\)] \(F_1\) is even, \(F_1\ge0\), \(F_1'(s)s\ge0\) for all \(s\in\R\), and \(F_1\in C^1(\R,\R)\) is convex;
    \item[\((f_2)\)] \(F_2\in C^1(\R,\R)\cap C^2((-\infty,-\delta_0)\cup(\delta_0,+\infty),\R)\), and for every \(r\in(2,2_\alpha^*)\) there exists \(C_r>0\) such that
    \[
    |F_2'(s)|\le C_r|s|^{r-1},
    \qquad \forall\, s\in\R;
    \]
    in particular, since \(p\in(4,2_\alpha^*)\), there exists \(C>0\) such that
    \[
    |F_2'(s)|\le C\bigl(|s|+|s|^{p-1}\bigr),
    \qquad \forall\, s\in\R;
    \]
    \item[\((f_3)\)] the map \(s\mapsto \dfrac{F_2'(s)}{s}\) is nondecreasing on \((0,+\infty)\) and strictly increasing on \((\delta_0,+\infty)\);
    \item[\((f_4)\)]
    \[
    \lim_{s\to+\infty}\frac{F_2'(s)}{s}=+\infty.
    \]
\end{itemize}

\begin{Pro}\label{prop2.1}
The function \(F_1\) is an \(N\)-function. Moreover, both \(F_1\) and its conjugate \(\widetilde{F}_1\) satisfy the \(\Delta_2\)-condition.
\end{Pro}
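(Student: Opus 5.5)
The plan is to check the four axioms of Definition~\ref{def2.1} directly from the explicit formula for \(F_1\), and then obtain both \(\Delta_2\)-conditions at once from the criterion \eqref{eq2.1}. For that it suffices to find constants \(1<l\le m<\infty\) with \(l\le F_1'(t)t/F_1(t)\le m\) for all \(t>0\). Throughout I will use the abbreviation \(A:=-(\log\delta_0^2+3)\). Since \(\delta_0<e^{-3/2}\), we have \(\log\delta_0^2<-3\), hence \(A>0\).

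\emph{Step 1: \(F_1\) is an \(N\)-function.}
\begin{itemize}
\item \emph{Evenness and \(C^1\) regularity.} These follow from the formulas, which depend only on \(|s|\) and match in value and derivative at \(|s|=\delta_0\). This is already recorded in \((f_1)\), as is convexity.
\item \emph{Positivity.} For \(0<|s|<\delta_0\) we have \(\log s^2<0\), so \(F_1(s)=-\tfrac12 s^2\log s^2>0\). For \(|s|\ge\delta_0\), set \(g(s)=\tfrac A2 s^2+2\delta_0 s-\tfrac{\delta_0^2}{2}\), which equals \(F_1(s)\) for \(s\ge\delta_0\). Then \(g\) is increasing on \([\delta_0,\infty)\) and \(g(\delta_0)=F_1(\delta_0)>0\) by continuity. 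Hence \(F_1(t)=0\) iff \(t=0\).
\item \emph{Behaviour at \(0\) and \(\infty\).} Near \(0\), \(F_1(t)/t=-\tfrac12 t\log t^2\to0\). At infinity, \(F_1(t)/t\sim \tfrac A2 t\to\infty\) because \(A>0\).
\end{itemize}
So (i)--(iv) of Definition~\ref{def2.1} hold.

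\emph{Step 2: the ratio bound on \(0<t<\delta_0\).} Here
\[
F_1'(t)t=-(\log t^2+1)t^2=2F_1(t)-t^2,
\qquad\text{so}\qquad
\frac{F_1'(t)t}{F_1(t)}=2+\frac{2}{\log t^2}.
\]
Since \(\log t^2<\log\delta_0^2<-3\), this ratio lies in the interval \(\bigl(2+\tfrac{2}{\log\delta_0^2},\,2\bigr)\). The left endpoint is greater than \(\tfrac43\), because \(\tfrac{2}{\log\delta_0^2}>-\tfrac23\).

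\emph{Step 3: the ratio bound on \(t\ge\delta_0\).} Here \(F_1'(t)t=At^2+2\delta_0 t\) and \(F_1(t)=\tfrac A2t^2+2\delta_0t-\tfrac{\delta_0^2}2\).
\begin{itemize}
\item \emph{Upper bound.} We have \(F_1'(t)t-2F_1(t)=-2\delta_0t+\delta_0^2<0\), so the ratio is less than \(2\).
\item \emph{Ratio exceeds \(1\).} We have \(F_1'(t)t-F_1(t)=\tfrac A2t^2+\tfrac{\delta_0^2}{2}>0\), so the ratio is greater than \(1\) pointwise.
\end{itemize}
Pointwise \(>1\) is not yet a uniform bound \(l>1\). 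To get one, note that the ratio is continuous on \([\delta_0,\infty)\) and tends to \(2\) as \(t\to\infty\). It therefore attains a minimum \(>1\) on any compact piece \([\delta_0,R]\) and stays above, say, \(3/2\) for \(t\ge R\) once \(R\) is large.

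\emph{Conclusion.} Combining Steps 2 and 3, \eqref{eq2.1} holds with \(m=2\) and some \(l>1\). The stated criterion then gives \(F_1,\widetilde F_1\in(\Delta_2)\).

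The only genuine obstacle is the lower bound \(l>1\). At first sight \(F_1'(t)t/F_1(t)\) might seem to approach \(1\) near \(0\) because of the logarithm. The explicit identity \(F_1'(t)t=2F_1(t)-t^2\) shows instead that it tends to \(2\). The choice \(\delta_0<e^{-3/2}\) is exactly what keeps the ratio bounded away from \(1\) near \(t=\delta_0\).
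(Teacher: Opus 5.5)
Your proof is correct and follows the same route as the paper: verify the $N$-function axioms, compute $F_1'(t)t/F_1(t)$ separately on $(0,\delta_0)$ and on $[\delta_0,\infty)$, then apply the criterion \eqref{eq2.1}. Your continuity-plus-limit argument for a uniform $l>1$ on $[\delta_0,\infty)$ fills a step the paper passes over, since the paper only shows the ratio exceeds $1$ pointwise there. One small correction to your closing remark: keeping the ratio above $1$ at $t=\delta_0$ needs only $\delta_0<e^{-1}$, whereas $\delta_0<e^{-3/2}$ is what gives $A>0$, and hence convexity and superlinear growth.
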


\begin{proof}
By construction, \(F_1\) is continuous, even, nonnegative, and vanishes only at the origin. Since \(\delta_0\in(0,e^{-3/2})\), it is also convex on \(\R\). Moreover,
\[
\lim_{s\to0}\frac{F_1(s)}{|s|}
=
\lim_{s\to0}-\frac12 |s|\log s^2
=
0.
\]
If we set
\[
A_0=-(\log \delta_0^2+3)>0,
\]
then for \(|s|\ge\delta_0\),
\[
F_1(s)=\frac{A_0}{2}s^2+2\delta_0|s|-\frac{\delta_0^2}{2},
\]
and therefore
\[
\lim_{|s|\to+\infty}\frac{F_1(s)}{|s|}=+\infty.
\]
Hence \(F_1\) is an \(N\)-function.

It remains to verify \eqref{eq2.1}. Since \(F_1\) is even, it is enough to consider \(s>0\). For \(0<s<\delta_0\),
\[
\frac{F_1'(s)s}{F_1(s)}
=
\frac{-(\log s^2+1)s^2}{-\frac12 s^2\log s^2}
=
2+\frac{1}{\log s},
\]
so, using \(\delta_0<e^{-1}\),
\[
1<
2+\frac{1}{\log \delta_0}
\le
\frac{F_1'(s)s}{F_1(s)}
<
2.
\]
For \(s\ge\delta_0\),
\[
F_1(s)=\frac{A_0}{2}s^2+2\delta_0 s-\frac{\delta_0^2}{2},
\qquad
F_1'(s)=A_0 s+2\delta_0,
\]
hence
\[
\frac{F_1'(s)s}{F_1(s)}
=
\frac{A_0 s^2+2\delta_0 s}{\frac{A_0}{2}s^2+2\delta_0 s-\frac{\delta_0^2}{2}}.
\]
A direct computation shows that
\[
\frac{F_1'(s)s}{F_1(s)}-1
=
\frac{\frac{A_0}{2}s^2+\frac{\delta_0^2}{2}}{\frac{A_0}{2}s^2+2\delta_0 s-\frac{\delta_0^2}{2}}
>0
\]
and
\[
2-\frac{F_1'(s)s}{F_1(s)}
=
\frac{2\delta_0 s-\delta_0^2}{\frac{A_0}{2}s^2+2\delta_0 s-\frac{\delta_0^2}{2}}
>0.
\]
Thus
\[
1<\frac{F_1'(s)s}{F_1(s)}<2,
\qquad \forall\, s>0.
\]
Therefore \eqref{eq2.1} holds for some \(l\in(1,2)\) and \(m=2\), and the conclusion follows.
\end{proof}

Under the standing assumption \(\alpha\in\bigl(\frac34,1\bigr)\), for every \(u\in H^\alpha(\R^3)\) there exists a unique \(\phi_u^\alpha\in\D^{\alpha,2}(\R^3)\) such that
\[
(-\Delta)^\alpha\phi=u^2
\qquad \text{in }\R^3,
\]
and
\[
\phi_u^\alpha(x)
=
C_\alpha\int_{\R^3}\frac{u^2(y)}{|x-y|^{3-2\alpha}}\,dy,
\qquad x\in\R^3,
\]
where
\[
C_\alpha
=
\frac{\Gamma\!\left(\frac{3-2\alpha}{2}\right)}
{\pi^{\frac32}2^{2\alpha}\Gamma(\alpha)}.
\]
Since
\[
F_2'(u)-F_1'(u)=u\log u^2+u,
\]
the rescaled problem \eqref{eq1.2} can be rewritten as
\[
(-\Delta)^\alpha u+(V(\varepsilon x)+1)u+\phi_u^\alpha u
=
F_2'(u)-F_1'(u)+|u|^{p-2}u
\qquad \text{in }\R^3.
\]

We define
\[
H_\varepsilon
=
\left\{
u\in H^\alpha(\R^3):
\int_{\R^3}V(\varepsilon x)u^2\,dx<\infty
\right\},
\qquad
X_\varepsilon=H_\varepsilon\cap L^{F_1}(\R^3).
\]
On \(H_\varepsilon\) we use the norm
\[
\|u\|_{H_\varepsilon}
=
\left(
\int_{\R^3}
\Bigl(
|(-\Delta)^{\frac{\alpha}{2}}u|^2+(V(\varepsilon x)+1)|u|^2
\Bigr)\,dx
\right)^{\frac12},
\]
and on \(X_\varepsilon\) the norm
\[
\|u\|_\varepsilon=\|u\|_{H_\varepsilon}+\|u\|_{F_1},
\]
where
\[
\|u\|_{F_1}
=
\inf\left\{
\lambda>0:
\int_{\R^3}F_1\!\left(\frac{|u|}{\lambda}\right)\,dx\le1
\right\}.
\]
Since \(V(\varepsilon x)+1\ge V_0+1>0\), there exists \(C>0\), independent of \(\varepsilon\), such that
\[
\|u\|_{H^\alpha(\R^3)}\le C\|u\|_{H_\varepsilon},
\qquad \forall\, u\in H_\varepsilon.
\]
Hence
\[
X_\varepsilon\hookrightarrow H^\alpha(\R^3)
\]
continuously. Moreover, \(H_\varepsilon\) is a Hilbert space and, by Proposition~\ref{prop2.1}, \(L^{F_1}(\R^3)\) is reflexive and separable. Therefore \((X_\varepsilon,\|\cdot\|_\varepsilon)\) is a reflexive and separable Banach space, and
\[
X_\varepsilon\hookrightarrow L^{F_1}(\R^3)
\]
continuously.

Since \(\alpha\in\bigl(\frac34,1\bigr)\), we have
\[
2\le \frac{12}{3+2\alpha}\le 2_\alpha^*,
\]
and thus
\[
H^\alpha(\R^3)\hookrightarrow L^{\frac{12}{3+2\alpha}}(\R^3).
\]
Using H\"older's inequality, the embedding \(\D^{\alpha,2}(\R^3)\hookrightarrow L^{2_\alpha^*}(\R^3)\), and the equation satisfied by \(\phi_u^\alpha\), we obtain
\[
\|\phi_u^\alpha\|_{\D^{\alpha,2}}
\le
C\|u\|_{\frac{12}{3+2\alpha}}^2,
\]
and consequently
\[
\int_{\R^3}\phi_u^\alpha u^2\,dx
\le
C\|u\|_{\frac{12}{3+2\alpha}}^4
\le
C\|u\|_{H^\alpha(\R^3)}^4
\le
C\|u\|_{H_\varepsilon}^4
\le
C\|u\|_\varepsilon^4,
\qquad \forall\, u\in X_\varepsilon.
\]

The associated energy functional is given by
\[
\begin{aligned}
J_\varepsilon(u)
&=
\frac12\int_{\R^3}
\Bigl(
|(-\Delta)^{\frac{\alpha}{2}}u|^2+(V(\varepsilon x)+1)|u|^2
\Bigr)\,dx
+\frac14\int_{\R^3}\phi_u^\alpha u^2\,dx \\
&\quad
+\int_{\R^3}F_1(u)\,dx
-\int_{\R^3}F_2(u)\,dx
-\frac{1}{p}\int_{\R^3}|u|^p\,dx,
\qquad u\in X_\varepsilon.
\end{aligned}
\]
Standard arguments, together with Proposition~\ref{prop2.1}, property \((f_2)\), and the estimate above, show that
\[
J_\varepsilon\in C^1(X_\varepsilon,\R),
\]
and, for every \(u,v\in X_\varepsilon\),
\[
\begin{aligned}
\langle J_\varepsilon'(u),v\rangle
&=
\int_{\R^3}
(-\Delta)^{\frac{\alpha}{2}}u\,(-\Delta)^{\frac{\alpha}{2}}v\,dx
+\int_{\R^3}(V(\varepsilon x)+1)uv\,dx \\
&\quad
+\int_{\R^3}\phi_u^\alpha uv\,dx
+\int_{\R^3}F_1'(u)v\,dx
-\int_{\R^3}F_2'(u)v\,dx
-\int_{\R^3}|u|^{p-2}uv\,dx.
\end{aligned}
\]

\begin{lemma}\label{lem2.2}
Under \((V)\), the set
\[
M=\{x\in\R^3:V(x)=V_0\}
\]
is nonempty and compact.
\end{lemma}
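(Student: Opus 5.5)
We prove Lemma~\ref{lem2.2} by showing that $V$ attains its infimum $V_0$ on a bounded set, and that $M$ is closed and bounded. Throughout we use only the continuity of $V$ (guaranteed by $V\in C^1(\R^3,\R)$) and the strict inequality $V_\infty>V_0$ from $(V)$.

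\emph{Step 1 (reduction to a compact region).} Since $V_\infty>V_0$, fix $\eta:=\frac12(V_\infty-V_0)>0$. By the definition of $V_\infty=\lim_{|x|\to\infty}V(x)$, there exists $R>0$ such that
\[
V(x)\ge V_\infty-\eta=V_0+\eta \qquad\text{for all } |x|\ge R .
\]
Hence $\inf_{|x|\ge R}V\ge V_0+\eta>V_0$. Therefore the global infimum is determined by the ball:
\[
V_0=\inf_{\overline{B_R(0)}}V .
\]

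\emph{Step 2 (nonemptiness).} The ball $\overline{B_R(0)}$ is compact and $V$ is continuous, so by Weierstrass there is $x_0\in\overline{B_R(0)}$ with $V(x_0)=\min_{\overline{B_R(0)}}V=V_0$. Thus $x_0\in M$, and in particular $V_0$ is finite. Note that $V_0>-1$ is not needed for this argument.

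\emph{Step 3 (compactness).} The set $M=V^{-1}(\{V_0\})$ is closed as the preimage of a closed set under a continuous map. By Step 1, every $x\in M$ satisfies $|x|<R$, since points with $|x|\ge R$ have $V(x)\ge V_0+\eta>V_0$. So $M\subset B_R(0)$ is bounded, and by Heine--Borel $M$ is compact.

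The only delicate point is Step 1: we must use the strict inequality $V_\infty>V_0$ to keep the minimizing set from escaping to infinity. If instead $V_\infty=V_0$, the infimum might not be attained, or $M$ might be unbounded. Everything else is routine.
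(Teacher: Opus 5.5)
Your proof is correct and follows essentially the same route as the paper: you use $V_\infty>V_0$ to find $R$ with $V>V_0$ outside $B_R(0)$, then apply Weierstrass on $\overline{B_R(0)}$ to get nonemptiness, and conclude compactness from closedness of $V^{-1}(\{V_0\})$ together with $M\subset B_R(0)$. Your choice $\eta=\frac12(V_\infty-V_0)$ is the same threshold $\frac{V_0+V_\infty}{2}$ that the paper uses.
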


\begin{proof}
Choose \(R>0\) so large that
\[
|x|\ge R\quad\Longrightarrow\quad V(x)>\frac{V_0+V_\infty}{2}>V_0.
\]
Hence every minimizing sequence for \(V\) is contained in \(\overline{B_R(0)}\). By continuity of \(V\), the infimum \(V_0\) is attained, so \(M\neq\emptyset\). Since \(M=V^{-1}(\{V_0\})\) is closed and contained in \(\overline{B_R(0)}\), it is compact.
\end{proof}

For later use, define
\[
G_1(s):=F_1'(s)s,
\qquad
G_2(s):=F_2'(s)s,
\qquad s\in\R.
\]

\begin{lemma}\label{lem2.3}
The functions \(G_1\) and \(G_2\) belong to \(C^1(\R,\R)\). Moreover, the following estimates hold.
\begin{itemize}
    \item[(i)] There exists \(C>0\) such that
    \[
    lF_1(s)\le G_1(s)\le 2F_1(s),
    \qquad
    |G_1'(s)|\le C(1+|s|)
    \qquad \forall\, s\in\R.
    \]
    \item[(ii)] For every \(q\in(2,2_\alpha^*)\) there exists \(C_q>0\) such that
    \[
    |F_2(s)|+|G_2(s)|\le C_q|s|^q,
    \qquad
    |F_2'(s)|+|G_2'(s)|\le C_q\bigl(|s|+|s|^{q-1}\bigr)
    \qquad \forall\, s\in\R.
    \]
\end{itemize}
Consequently, for every \(\varepsilon>0\), the maps
\[
u\mapsto \int_{\R^3}G_1(u)\,dx,
\qquad
u\mapsto \int_{\R^3}G_2(u)\,dx,
\qquad
u\mapsto \int_{\R^3}\phi_u^\alpha u^2\,dx
\]
are of class \(C^1\) on \(X_\varepsilon\), and therefore
\[
\mathcal H_\varepsilon(u):=\langle J_\varepsilon'(u),u\rangle
\]
belongs to \(C^1(X_\varepsilon,\R)\). The same conclusion holds for the autonomous map
\[
\mathcal H_\mu(u):=\langle J_\mu'(u),u\rangle,
\qquad \mu>-1,
\]
on \(X\).
\end{lemma}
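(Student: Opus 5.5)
The plan is to verify everything by explicit piecewise computation on the three regions $\{0<|s|<\delta_0\}$, $\{|s|\ge\delta_0\}$ and $s=0$, and then pass to the functional-level statements by standard Nemytskii-operator arguments. Since $F_1,F_2$ are even, $G_1,G_2$ are even, so it suffices to treat $s\ge0$.

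\textbf{Regularity and estimates for $G_1$.} For $0<s<\delta_0$ we have
\[
G_1(s)=-(\log s^2+1)s^2,\qquad G_1'(s)=-2s\log s^2-4s,
\]
and $G_1'(s)\to0$ as $s\to0^+$. Together with $G_1(s)=o(s)$, this gives $G_1'(0)=0$ with continuity at the origin. For $s\ge\delta_0$,
\[
G_1(s)=A_0s^2+2\delta_0 s,\qquad G_1'(s)=2A_0s+2\delta_0 .
\]
At $s=\delta_0$ the inner formula gives $-2\delta_0\log\delta_0^2-4\delta_0=2\delta_0(A_0+1)$. The outer formula gives $2A_0\delta_0+2\delta_0$. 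The one-sided derivatives therefore match, so $G_1\in C^1$.

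The bound $|G_1'(s)|\le C(1+|s|)$ holds on the compact piece $[0,\delta_0]$ because $s\log s^2$ is bounded there, and it is linear on the outer piece. The inequality $lF_1\le G_1\le 2F_1$ is just the ratio bound $l\le F_1'(s)s/F_1(s)\le 2$ established in the proof of Proposition~\ref{prop2.1}, with equality trivial at $s=0$.

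\textbf{Regularity and estimates for $G_2$.} Here $G_2\equiv0$ on $|s|<\delta_0$. For $s\ge\delta_0$,
\[
G_2(s)=s^2\log(s^2/\delta_0^2)-2s^2+2\delta_0 s,\qquad G_2'(s)=2s\log(s^2/\delta_0^2)-2s+2\delta_0 .
\]
At $s=\delta_0$ both $G_2(\delta_0)$ and $G_2'(\delta_0^+)$ vanish, so $G_2\in C^1$. The growth bounds follow from the fact that all of $F_2,G_2,F_2',G_2'$ vanish near $0$, while for large $s$ the logarithm is dominated by $s^{q-2}$ for any $q>2$. On $[\delta_0,S]$ the relevant quantities are bounded by constants, and these are in turn bounded by multiples of $s^q$ or of $s+s^{q-1}$ since $s\ge\delta_0$.

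\textbf{Functional-level statements.} By $(ii)$, the map $u\mapsto\int G_2(u)$ is $C^1$ on $L^2\cap L^q$, hence on $H^\alpha$ and on $X_\varepsilon$, by the standard Nemytskii argument with Sobolev embeddings.

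The map $u\mapsto\int G_1(u)$ is the step I expect to be the main obstacle, because $G_1'$ behaves like $-s\log s^2$ near $0$ and is not controlled by $F_1$-type quantities in an obvious way. I would handle it by writing $G_1=G_1\chi_{\{|s|\le\delta_0\}}+G_1\chi_{\{|s|>\delta_0\}}$. Two inputs would then be combined:
\begin{itemize}
\item[(a)] $|G_1'(s)|\le C\,\bigl(|F_1'(s)|+|s|\bigr)$ near $0$. This holds because $G_1'(s)=2F_1'(s)-2s$ on $(0,\delta_0)$, and on the outer region $G_1'=2F_1'-2A_0s-2\delta_0\operatorname{sgn}(s)+\cdots$, which again amounts to a linear bound.
\item[(b)] $u\mapsto F_1'(u)$ is continuous from $L^{F_1}$ to $L^{\widetilde F_1}$, which follows from the $\Delta_2$ property in Proposition~\ref{prop2.1}.
\end{itemize}
Indeed, a cleaner route is to note that for all $s$,
\[
G_1'(s)=2F_1'(s)+R(s),\qquad |R(s)|\le C|s|,
\]
where $R$ is continuous and linearly bounded. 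Then $\langle D\!\int G_1(u),v\rangle=\int G_1'(u)v$ is continuous in $u$. The $F_1'$ part is handled by Hölder's inequality in $L^{F_1}\times L^{\widetilde F_1}$, and the $R$ part in $L^2$. Fréchet differentiability then follows by the mean value theorem and Vitali/dominated convergence.

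For the Poisson term, $\int\phi_u^\alpha u^2$ is a continuous quartic form on $L^{12/(3+2\alpha)}$ by Hardy--Littlewood--Sobolev, hence smooth. Its derivative is $4\int\phi_u^\alpha uv$.

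Finally, $\mathcal H_\varepsilon(u)$ is the sum of the quadratic $H_\varepsilon$-norm, the Poisson term, $\int G_1(u)$, $-\int G_2(u)$ and $-\|u\|_p^p$, all of which are $C^1$ on $X_\varepsilon$. The argument is identical for $\mathcal H_\mu$ on $X$, with $V(\varepsilon x)$ replaced by the constant $\mu$.
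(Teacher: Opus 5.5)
Your proposal is correct. Its pointwise part coincides with the paper's argument: the piecewise formulas, the matching of values and one-sided derivatives at $\pm\delta_0$, the ratio bound from Proposition~\ref{prop2.1}, and absorbing the logarithm into $|s|^{q-2}$ for $F_2,G_2$.

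Where you differ is the $C^1$-regularity of $u\mapsto\int_{\R^3}G_1(u)\,dx$. The paper only invokes standard Nemytskii theory, the embedding $X_\varepsilon\hookrightarrow L^2\cap L^q$, and the bound $|G_1'(s)|\le C(1+|s|)$. These do not by themselves control $\int_{\R^3}G_1'(u)v\,dx$: near $0$ one has $G_1'(s)\sim-2s\log s^2$, which is not $O(|s|)$, and using the bound $C(1+|s|)$ directly would require $v\in L^1(\R^3)$. Your identity $G_1'=2F_1'+R$, with $R(s)=-2s$ for $|s|<\delta_0$ and $R(s)=-2\delta_0\operatorname{sgn}(s)$ for $|s|\ge\delta_0$ (so $|R(s)|\le 2|s|$), supplies the missing Orlicz ingredient. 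It places the singular part in the duality $L^{F_1}\times L^{\widetilde F_1}$.

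The argument can be streamlined. Integrating gives $G_1=2F_1+P$ with $P(s)=-s^2$ for $|s|<\delta_0$ and $P(s)=\delta_0^2-2\delta_0|s|$ for $|s|\ge\delta_0$. Then $P\in C^1(\R)$, $|P(s)|\le s^2$ and $|P'(s)|\le 2|s|$. Hence $\int G_1(u)\,dx=2\int F_1(u)\,dx+\int P(u)\,dx$ is $C^1$ on $X_\varepsilon$ and on $X$. This follows from the $C^1$-regularity of $\int F_1$ on $L^{F_1}$, already needed for $J_\varepsilon\in C^1$, plus an elementary $L^2$ Nemytskii argument, with no separate mean-value/Vitali step.

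One small slip: in your item (a), the outer-region identity is $G_1'=2F_1'-2\delta_0\operatorname{sgn}(s)$, with no $-2A_0s$ term. Your ``cleaner route'' states it correctly.
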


\begin{proof}
The explicit formulas for \(F_1'\) and \(F_2'\) show that \(G_1\) and \(G_2\) are \(C^1\) on each of the regions \((-\infty,-\delta_0)\), \((-\delta_0,\delta_0)\), and \((\delta_0,+\infty)\), and that the one-sided derivatives match at \(\pm\delta_0\); hence \(G_1,G_2\in C^1(\R,\R)\). The estimate \(lF_1\le G_1\le 2F_1\) follows from \eqref{eq2.1}, while the bound on \(G_1'\) is immediate from the explicit expression of \(G_1\) on the two pieces \(|s|<\delta_0\) and \(|s|\ge \delta_0\).

For \(F_2\) and \(G_2\), the required bounds follow from the fact that both functions vanish on \((-\delta_0,\delta_0)\) and that, for \(|s|\ge \delta_0\), the logarithmic factors can be absorbed by any power \(|s|^{q-2}\) with \(q>2\). The differentiability of the associated integral functionals is then a standard consequence of Nemytskii operator theory, together with the embeddings
\[
X_\varepsilon\hookrightarrow H^\alpha(\R^3)\hookrightarrow L^2(\R^3)\cap L^q(\R^3),
\qquad q\in(2,2_\alpha^*),
\]
and Lemma~\ref{lem2.5}(ii)--(vi) for the Poisson term.
\end{proof}

\begin{lemma}\label{lem2.4}
Let \(Y\) denote either \(X\) or \(X_\varepsilon\). Let \(\{u_n\}\subset Y\) be bounded, assume that
\[
u_n\rightharpoonup u\quad\text{in }H^\alpha(\R^3),
\qquad
u_n(x)\to u(x)\quad\text{a.e. in }\R^3,
\]
and set \(z_n:=u_n-u\). Then the following splittings hold:
\[
\int_{\R^3}F_1(u_n)\,dx
=
\int_{\R^3}F_1(z_n)\,dx
+
\int_{\R^3}F_1(u)\,dx
+o(1),
\]
\[
\int_{\R^3}G_1(u_n)\,dx
=
\int_{\R^3}G_1(z_n)\,dx
+
\int_{\R^3}G_1(u)\,dx
+o(1),
\]
\[
\int_{\R^3}F_2(u_n)\,dx
=
\int_{\R^3}F_2(z_n)\,dx
+
\int_{\R^3}F_2(u)\,dx
+o(1),
\]
\[
\int_{\R^3}G_2(u_n)\,dx
=
\int_{\R^3}G_2(z_n)\,dx
+
\int_{\R^3}G_2(u)\,dx
+o(1),
\]
and, for every \(r\in[2,2_\alpha^*)\),
\[
\int_{\R^3}|u_n|^r\,dx
=
\int_{\R^3}|z_n|^r\,dx
+
\int_{\R^3}|u|^r\,dx
+o(1).
\]
Moreover, by Lemma~\ref{lem2.5}(v),
\[
\int_{\R^3}\phi_{u_n}^\alpha u_n^2\,dx
=
\int_{\R^3}\phi_{z_n}^\alpha z_n^2\,dx
+
\int_{\R^3}\phi_u^\alpha u^2\,dx
+o(1).
\]
The same statements remain true after replacing \(u_n\) by translated sequences \(u_n(\cdot+y_n)\), with \(\{y_n\}\subset\R^3\) arbitrary.
\end{lemma}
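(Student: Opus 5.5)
The plan is to reduce every splitting to a Brezis--Lieb type lemma for a continuous function with a suitable growth bound. Since \(\{u_n\}\) is bounded in \(Y\), it is bounded in \(H^\alpha(\R^3)\) and hence in every \(L^r\) with \(r\in[2,2_\alpha^*]\). Because \(\|\cdot\|_{F_1}\) is part of the norm and \(F_1\in(\Delta_2)\), \eqref{eq2.2} gives \(\sup_n\int F_1(u_n)<\infty\). By Fatou's lemma and weak lower semicontinuity, \(u\in L^{F_1}\cap H^\alpha\), and \(\{z_n\}\) has the same bounds.

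The tool is the general Brezis--Lieb principle. Let \(\Psi\in C^1(\R)\) with \(\Psi(0)=0\), and suppose for every \(\epsilon>0\) there is \(C_\epsilon\) such that
\[
|\Psi(a+b)-\Psi(a)|\le \epsilon\,\Theta(a)+C_\epsilon\,\Theta_1(b),
\]
where \(\int\Theta(z_n)\) is bounded and \(\Theta_1(u)\in L^1\). Then \(\int|\Psi(u_n)-\Psi(z_n)-\Psi(u)|\to0\). The proof is the standard one. Set \(W_{n,\epsilon}:=\bigl(|\Psi(z_n+u)-\Psi(z_n)-\Psi(u)|-\epsilon\Theta(z_n)\bigr)^+\). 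It is dominated by \(C_\epsilon\Theta_1(u)+|\Psi(u)|\) and tends to \(0\) a.e. because \(z_n\to0\) a.e. Dominated convergence then yields \(\limsup\int|\cdots|\le\epsilon\sup_n\int\Theta(z_n)\), and we let \(\epsilon\to0\).

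We apply this principle to each term in turn.

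\emph{The term \(|s|^r\).} This is the classical Brezis--Lieb lemma with \(\Theta=\Theta_1=|s|^r\).

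\emph{The terms \(F_2\) and \(G_2\).} Lemma~\ref{lem2.3}(ii) gives \(|\Psi'(s)|\le C(|s|+|s|^{q-1})\). The mean value theorem and Young's inequality then give the required bound with \(\Theta(s)=\Theta_1(s)=|s|^2+|s|^q\), which is integrable along the sequence by boundedness in \(H^\alpha\).

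\emph{The term \(F_1\).} \(F_1'\) is bounded by \(C|s|\) for \(|s|\ge\delta_0\), but near \(0\) we only have \(|F_1'(s)|\le C|s||\log|s||\). I would prove
\[
|F_1(a+b)-F_1(a)|\le\epsilon F_1(a)+C_\epsilon F_1(b)
\]
by convexity. For \(\theta\in(0,1)\) we have \(F_1(a+b)\le(1-\theta)F_1(a/(1-\theta))+\theta F_1(b/\theta)\). The \(\Delta_2\)-type growth from \eqref{eq2.1} gives \(F_1(\lambda t)\le\lambda^2F_1(t)\) for \(\lambda\ge1\) and \(F_1(t/\theta)\le\theta^{-2}F_1(t)\). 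Hence
\[
F_1(a+b)\le(1-\theta)^{-1}F_1(a)+\theta^{-1}F_1(b).
\]
For the reverse inequality, write \(a=(a+b)-b\) in the same way. Choosing \(\theta\) small makes \((1-\theta)^{-1}-1\le\epsilon\), which gives the required estimate with \(\Theta=\Theta_1=F_1\).

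\emph{The term \(G_1\).} We have \(G_1\le 2F_1\), and \(|G_1'(s)|\le C|s|(1+|\log|s||)\lesssim\) a combination of \(F_1'\) and \(|s|\). By the mean value theorem, \(|G_1(a+b)-G_1(a)|\le|b|\,|G_1'(\xi)|\). I expect this term to be the main obstacle, because the logarithmic singularity of \(G_1'\) at \(0\) prevents a naive \(|s|^2\) bound.

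I would handle it by splitting into the regions \(|\xi|\ge\delta_0\) and \(|\xi|<\delta_0\).

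1. On \(|\xi|\ge\delta_0\), \(|G_1'|\le C|\xi|\), so Young's inequality gives \(\epsilon|a|^2+C_\epsilon|b|^2\).
2. On \(|\xi|<\delta_0\), use the Young inequality for the pair \((F_1,\widetilde F_1)\):
\[
|b|\,|G_1'(\xi)|\lesssim \epsilon\widetilde F_1\bigl(F_1'(\xi)\bigr)+C_\epsilon F_1(b)+\epsilon\xi^2+C_\epsilon b^2.
\]
Then apply \(\widetilde F_1(F_1'(t))\le F_1'(t)t\le2F_1(t)\) and \(F_1(\xi)\lesssim F_1(a)+F_1(b)\), which follows by convexity as above.

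This gives \(\Theta=\Theta_1=F_1+s^2\), and both are controlled along the sequence.

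\emph{The Poisson term.} This follows directly from Lemma~\ref{lem2.5}(v), as indicated in the statement.

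\emph{Translated sequences.} The hypotheses are invariant under \(u_n\mapsto u_n(\cdot+y_n)\): the bounds are translation invariant in \(H^\alpha\) and \(L^{F_1}\). Only the weak/a.e. limit assumptions are used, and these are assumed for the translated sequence itself. So the same proof applies verbatim.
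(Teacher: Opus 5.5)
Your proposal is correct and follows the same term-by-term Brezis--Lieb strategy as the paper: classical Brezis--Lieb for $|s|^r$, Lemma~\ref{lem2.3}(ii) for $F_2$ and $G_2$, Lemma~\ref{lem2.5}(v) for the Poisson term, and translation invariance of the $H^\alpha$ and $L^{F_1}$ bounds for shifted sequences. The only difference is in $F_1$ and $G_1$, where the paper simply invokes an Orlicz-space Brezis--Lieb lemma and you verify the $\epsilon$-condition directly (convexity plus the upper index $m=2$ for $F_1$; mean value theorem plus Young's inequality in the pair $(F_1,\widetilde F_1)$ for $G_1$), and this is more careful for $G_1$: the comparability $lF_1\le G_1\le 2F_1$ cited in the paper does not by itself give a splitting, and $G_1$ is not even convex when $\delta_0>e^{-2}$.
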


\begin{proof}
The \(L^r\)-splitting is exactly the classical Br\'ezis--Lieb lemma. The Poisson splitting is Lemma~\ref{lem2.5}(v). For \(F_2\) and \(G_2\), Lemma~\ref{lem2.3}(ii) reduces the claim to the classical Br\'ezis--Lieb lemma with any exponent \(q\in(2,2_\alpha^*)\).

For \(F_1\) and \(G_1\), we use the Br\'ezis--Lieb theorem in Orlicz spaces. Proposition~\ref{prop2.1} shows that \(F_1\) is an \(N\)-function satisfying the \(\Delta_2\)-condition, and Lemma~\ref{lem2.3}(i) shows that \(G_1\) is equivalent to \(F_1\). Since \(\{u_n\}\) is bounded in \(L^{F_1}(\R^3)\), the Orlicz-space Br\'ezis--Lieb lemma applies to both \(F_1\) and \(G_1\), giving the desired decompositions. The translation statement follows from the translation invariance of the underlying norms and integrals.
\end{proof}

We shall also use the following standard properties of \(\phi_u^\alpha\).

\begin{lemma}\label{lem2.5}
{\rm\cite[Lemma 2.3]{Teng2016}}
Let \(u\in H^\alpha(\R^3)\) with \(\alpha\in\left[\frac12,1\right)\). Then:
\begin{itemize}
    \item[(i)] \(\phi_u^\alpha\ge0\);
    \item[(ii)] the map
    \[
    u\mapsto \phi_u^\alpha:H^\alpha(\R^3)\to \D^{\alpha,2}(\R^3)
    \]
    is continuous and maps bounded sets into bounded sets;
    \item[(iii)]
    \[
    \int_{\R^3}\phi_u^\alpha u^2\,dx
    \le
    C\|u\|_{\frac{12}{3+2\alpha}}^4
    \le
    C\|u\|_{H^\alpha(\R^3)}^4;
    \]
    \item[(iv)] for every \(\tau\in\R\),
    \[
    \phi_{\tau u}^\alpha=\tau^2\phi_u^\alpha;
    \]
    \item[(v)] if \(u_n\rightharpoonup u\) in \(H^\alpha(\R^3)\), then \(\phi_{u_n}^\alpha\rightharpoonup \phi_u^\alpha\) in \(\D^{\alpha,2}(\R^3)\), and
    \[
    \int_{\R^3}\phi_{u_n}^\alpha u_n^2\,dx
    =
    \int_{\R^3}\phi_{u_n-u}^\alpha (u_n-u)^2\,dx
    +
    \int_{\R^3}\phi_u^\alpha u^2\,dx
    +
    o(1);
    \]
    \item[(vi)] if \(u_n\to u\) in \(H^\alpha(\R^3)\), then \(\phi_{u_n}^\alpha\to \phi_u^\alpha\) in \(\D^{\alpha,2}(\R^3)\), and
    \[
    \int_{\R^3}\phi_{u_n}^\alpha u_n^2\,dx
    \to
    \int_{\R^3}\phi_u^\alpha u^2\,dx.
    \]
\end{itemize}
\end{lemma}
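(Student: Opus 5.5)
The statement is Lemma~\ref{lem2.5}, taken from \cite[Lemma 2.3]{Teng2016}. The plan is to rebuild each item from the Riesz potential representation
\[
\phi_u^\alpha=C_\alpha I*u^2,\qquad I(x)=|x|^{-(3-2\alpha)},
\]
together with the weak formulation
\[
\int_{\R^3}(-\Delta)^{\frac{\alpha}{2}}\phi\,(-\Delta)^{\frac{\alpha}{2}}v\,dx=\int_{\R^3}u^2v\,dx,\qquad v\in\D^{\alpha,2}(\R^3).
\]

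\textbf{Existence, (i)--(iv).} Since $u\in H^\alpha$, we have $u^2\in L^{6/(3+2\alpha)}$, because $12/(3+2\alpha)\in[2,2_\alpha^*]$. The exponent $6/(3+2\alpha)$ is the dual of $2_\alpha^*$. Hence the map $v\mapsto\int u^2v\,dx$ is a bounded linear functional on $\D^{\alpha,2}$, with norm at most
\[
S_\alpha^{-1/2}\|u\|_{12/(3+2\alpha)}^2 .
\]
Lax--Milgram (Riesz representation) then gives a unique $\phi_u^\alpha\in\D^{\alpha,2}$ solving the weak formulation. Testing with $v=\phi_u^\alpha$ yields
\[
\|\phi_u^\alpha\|_{\D}\le C\|u\|_{12/(3+2\alpha)}^2 ,
\]
and therefore
\[
\int_{\R^3}\phi_u^\alpha u^2\,dx=\|\phi_u^\alpha\|_{\D}^2\le C\|u\|^4 .
\]
This is (iii), once combined with the embedding $H^\alpha\hookrightarrow L^{12/(3+2\alpha)}$. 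Item (i) is immediate from the positive kernel. Item (iv) follows from uniqueness and linearity of the equation in $u^2$.

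\textbf{(ii) and (vi).} The solution map is the composition of $u\mapsto u^2$ with the Riesz isometry. Its difference satisfies
\[
\|\phi_u-\phi_w\|_{\D}\le C\|u^2-w^2\|_{6/(3+2\alpha)}\le C\|u-w\|_{q}\,\|u+w\|_{q},\qquad q=\tfrac{12}{3+2\alpha}.
\]
This gives local Lipschitz continuity and boundedness on bounded sets. The convergence of $\int\phi_{u_n}u_n^2\,dx$ in (vi) follows, because this quantity equals $\|\phi_{u_n}\|_{\D}^2$.

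\textbf{(v), the main obstacle.} Suppose $u_n\rightharpoonup u$ in $H^\alpha$. Then $u_n^2\to u^2$ a.e. along subsequences, and $u_n^2$ is bounded in $L^{6/(3+2\alpha)}$, so $u_n^2\rightharpoonup u^2$ weakly there. Because the solution map is linear and bounded in $u^2$, this gives $\phi_{u_n}\rightharpoonup\phi_u$ in $\D^{\alpha,2}$.

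For the splitting, write $z_n=u_n-u$ and expand the Coulomb-type bilinear form
\[
\mathbb D(f,g)=\iint\frac{f(x)g(y)}{|x-y|^{3-2\alpha}}\,dx\,dy
\]
at $f=g=u_n^2$, using $u_n^2=z_n^2+2z_nu+u^2$. The task is to show that every cross term tends to zero:
\[
\mathbb D(z_n^2,u^2),\quad \mathbb D(z_nu,z_nu),\quad \mathbb D(z_nu,u^2),\quad \mathbb D(z_n^2,z_nu)\to0 .
\]

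The plan is to argue as in Zhao--Zhao's Brezis--Lieb lemma for the nonlocal term.

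1. By Hardy--Littlewood--Sobolev, $\mathbb D(z_n^2,u^2)=\int\phi_{z_n}u^2\,dx$. Since $\phi_{z_n}\rightharpoonup0$ weakly in $L^{2_\alpha^*}$, this term tends to zero.
2. For the terms involving $z_nu$, show that $z_nu\to0$ strongly in $L^{6/(3+2\alpha)}$. Fix $\eta>0$ and choose $R$ with $\|u\|_{L^q(B_R^c)}<\eta$. On $B_R$, use the compact embedding $H^\alpha\hookrightarrow L^q_{\mathrm{loc}}$, which gives $z_n\to0$ in $L^q(B_R)$. Outside $B_R$, the contribution is bounded by $\|z_n\|_q\,\eta$.
3. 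With this strong convergence in hand, the remaining terms are controlled by HLS against bounded factors.

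All limits hold along subsequences, but since they are identified independently of the subsequence, they hold for the whole sequence. The delicate point is exactly step 2: the localization argument replacing the lack of compactness on $\R^3$.
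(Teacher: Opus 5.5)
The paper gives no proof of this lemma; it simply quotes \cite[Lemma 2.3]{Teng2016}. Your reconstruction follows the standard route. It uses Riesz representation in \(\D^{\alpha,2}(\R^3)\) via the duality \((2_\alpha^*)'=\frac{6}{3+2\alpha}\), and local Lipschitz continuity from \(\|u^2-w^2\|_{6/(3+2\alpha)}\le\|u-w\|_q\|u+w\|_q\). It also uses weak continuity of the linear solution operator and a Zhao--Zhao type expansion of the quartic form with exactly the four cross terms you list. Items (i)--(iv), (vi), the weak-convergence part of (v), and Steps 1 and 3 of the splitting are sound for every \(\alpha\in[\frac12,1)\).

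\textbf{Gap at the endpoint \(\alpha=\frac12\).} The step that fails is Step 2, at the endpoint \(\alpha=\frac12\), which the statement explicitly includes. There \(q=\frac{12}{3+2\alpha}=3=2_{1/2}^*\), so the local embedding \(H^{1/2}(\R^3)\hookrightarrow L^3_{\mathrm{loc}}(\R^3)\) is \emph{not} compact. You therefore cannot conclude \(z_n\to0\) in \(L^q(B_R)\). For \(\alpha>\frac12\) one has \(q<2_\alpha^*\), and your localization argument is correct; this covers the paper's working range \(\alpha\in(\frac34,1)\).

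\textbf{Fix valid for all \(\alpha\).} The conclusion \(z_nu\to0\) in \(L^{q/2}(\R^3)\) still holds for all \(\alpha\in[\frac12,1)\), and the localization is not needed:
\begin{itemize}
\item \(|z_n|^{q/2}\) is bounded in \(L^2(\R^3)\).
\item By compactness in \(L^2_{\mathrm{loc}}\), \(|z_n|^{q/2}\to0\) a.e.\ along subsequences, hence \(|z_n|^{q/2}\rightharpoonup0\) weakly in \(L^2(\R^3)\).
\item Since \(|u|^{q/2}\in L^2(\R^3)\), it follows that \(\|z_nu\|_{q/2}^{q/2}=\int_{\R^3}|z_n|^{q/2}|u|^{q/2}\,dx\to0\).
\item The usual sub-subsequence argument extends this to the whole sequence.
\end{itemize}
With this replacement, Step 3 (HLS against bounded factors) goes through unchanged, and your proof covers the full stated range.
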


\begin{lemma}\label{lem2.6}
{\rm\cite{MR3216834}}
Let \(u\in \D^{\alpha,2}(\R^3)\) and \(\varphi\in C_0^\infty(\R^3)\). For \(r>0\), define
\[
\varphi_r(x)=\varphi\!\left(\frac{x}{r}\right).
\]
Then:
\begin{itemize}
    \item[(i)] \(u\varphi_r\to0\) in \(\D^{\alpha,2}(\R^3)\) as \(r\to0\);
    \item[(ii)] if \(\varphi\equiv1\) in a neighbourhood of the origin, then
    \[
    u\varphi_r\to u
    \qquad \text{in } \D^{\alpha,2}(\R^3)
    \qquad \text{as } r\to+\infty.
    \]
\end{itemize}
\end{lemma}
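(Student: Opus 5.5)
This is the standard Moser-type density result, Lemma~\ref{lem2.6}, quoted from \cite{MR3216834}. I would prove it by a direct estimate of the Gagliardo seminorm of \(u\varphi_r\) (respectively of \(u-u\varphi_r\)). First I reduce to \(u\in C_0^\infty(\R^3)\). Indeed, \(\|u\varphi_r\|_{\D^{\alpha,2}}\le C\|u\|_{\D^{\alpha,2}}\) uniformly in \(r\), so an \(\varepsilon/3\)-argument with the density of \(C_0^\infty\) in \(\D^{\alpha,2}(\R^3)\) transfers both (i) and (ii) to general \(u\). Establishing this uniform bound is the core step.

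To get the bound, I write
\[
u(x)\varphi_r(x)-u(y)\varphi_r(y)=\varphi_r(x)\bigl(u(x)-u(y)\bigr)+u(y)\bigl(\varphi_r(x)-\varphi_r(y)\bigr).
\]
The first piece contributes at most \(\|\varphi\|_\infty^2[u]^2\). For the second piece I need to control
\[
\int_{\R^3}|u(y)|^2\int_{\R^3}\frac{|\varphi_r(x)-\varphi_r(y)|^2}{|x-y|^{3+2\alpha}}\,dx\,dy .
\]
I split the inner integral at \(|x-y|\) versus \(r\), using the Lipschitz bound \(|\varphi_r(x)-\varphi_r(y)|\le \|\nabla\varphi\|_\infty|x-y|/r\) and the sup bound \(2\|\varphi\|_\infty\). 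This gives an inner integral bounded by \(Cr^{-2\alpha}\), and it is supported essentially where \(\operatorname{dist}(y,\operatorname{supp}\varphi_r)\lesssim r\). The tail, far from the support, is controlled by \(C r^{3}\,\operatorname{dist}^{-3-2\alpha}\).

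Combining this with Hölder's inequality and the Sobolev embedding into \(L^{2_\alpha^*}\) is the main obstacle. On the ball \(B_{Kr}\), Hölder gives \(r^{-2\alpha}\int_{B_{Kr}}u^2\le C\|u\|_{L^{2_\alpha^*}(B_{Kr})}^2\), since the volume factor is \(r^{3(1-2/2_\alpha^*)}=r^{2\alpha}\). The tail is handled by the dyadic annuli \(B_{2^{k+1}r}\setminus B_{2^kr}\), which gives a convergent geometric sum times \(\|u\|_{2_\alpha^*}^2\). Hence
\[
[u\varphi_r]^2\le C\bigl([u]^2+\|u\|_{2_\alpha^*}^2\bigr)\le C[u]^2 .
\]

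For (i) with \(u\in C_0^\infty\), the first piece \(\int\!\!\int \varphi_r(x)^2|u(x)-u(y)|^2|x-y|^{-3-2\alpha}\) tends to \(0\) by dominated convergence, because \(\varphi_r\to0\) pointwise off the origin. The second piece is dominated by the localized \(L^{2_\alpha^*}\)-norm of \(u\) on \(B_{Kr}\) plus dyadic tails weighted by \(2^{-2\alpha k}\). Each of these tends to \(0\) as \(r\to0\), by absolute continuity of the integral together with summable weights.

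For (ii), I apply the same decomposition to \(u(1-\varphi_r)\), where \(\psi=1-\varphi\) vanishes near the origin. Now the relevant localization of the second piece is to \(|y|\gtrsim r\), so the bound is by \(\|u\|_{L^{2_\alpha^*}(|y|\ge cr)}^2\) plus tails. This tends to \(0\) as \(r\to\infty\), and the first piece tends to \(0\) by dominated convergence. With the reduction from the first paragraph, this completes the plan.
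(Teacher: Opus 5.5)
The paper does not prove this lemma; it simply quotes it from \cite{MR3216834}, so there is no in-paper argument to compare with. Your route is the standard direct proof: the Leibniz splitting, the kernel bound $I_r(y):=\int_{\R^3}|\varphi_r(x)-\varphi_r(y)|^2|x-y|^{-3-2\alpha}\,dx\le Cr^{-2\alpha}\min\{1,(r/|y|)^{3+2\alpha}\}$, and H\"older plus Sobolev on $B_{Kr}$ and on dyadic annuli. This correctly gives the uniform bound $[u\varphi_r]\le C[u]$, hence the density reduction, and it handles (i). On the annuli the weights actually come out as $2^{-3k}$ rather than $2^{-2\alpha k}$, which is harmless.

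The one step that is wrong is the localization claim in (ii). With $\psi:=1-\varphi$ one has $\psi_r(x)-\psi_r(y)=-(\varphi_r(x)-\varphi_r(y))$, so the multiplier of $|u(y)|^2$ in the second piece is the same $I_r$. It is \emph{not} concentrated on $|y|\gtrsim r$. Suppose $\varphi\equiv1$ on $B_a$. Then for $|y|\le ar/2$ one has $\psi_r(y)=0$, but $I_r(y)=\int_{|x|\ge ar}|\psi_r(x)|^2|x-y|^{-3-2\alpha}\,dx\ge c\,r^{-2\alpha}$ with $c>0$. So the inner region contributes a quantity comparable to $r^{-2\alpha}\int_{B_{ar/2}}|u|^2$. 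Your bound ``$\|u\|^2_{L^{2_\alpha^*}(|y|\ge cr)}$ plus tails'' omits this term. H\"older in $L^{2_\alpha^*}$ alone only bounds it by $C\|u\|^2_{L^{2_\alpha^*}(B_{ar/2})}$, which does not tend to $0$ as $r\to\infty$.

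The fix is one line, since you have already reduced to $u\in C_0^\infty(\R^3)$: the whole second piece is at most $\sup_y I_r(y)\,\|u\|_2^2\le Cr^{-2\alpha}\|u\|_2^2\to0$. For general $u$ you would instead split at a fixed radius $R$. Use $r^{-2\alpha}\int_{B_R}|u|^2\to0$ together with $r^{-2\alpha}\int_{R\le|y|\le ar/2}|u|^2\le C\|u\|^2_{L^{2_\alpha^*}(|y|\ge R)}$, which is small for $R$ large.

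Similarly, once $u\in C_0^\infty(\R^3)$, the second piece in (i) is at most $\|u\|_\infty^2\int_{\R^3} I_r=\|u\|_\infty^2[\varphi_r]^2=\|u\|_\infty^2r^{3-2\alpha}[\varphi]^2\to0$. So the only substantive step of the whole proof is the uniform bound, which you did correctly.
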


\section{Nehari manifold and ground states}

\subsection{Geometry of the Nehari manifold}

We define the Nehari manifold associated with \(J_{\varepsilon}\) by
\[
\mathcal{N}_{\varepsilon}
=
\left\{
u\in X_{\varepsilon}\setminus\{0\}:
\langle J_{\varepsilon}'(u),u\rangle=0
\right\}.
\]
Every nontrivial critical point of \(J_{\varepsilon}\) belongs to \(\mathcal{N}_{\varepsilon}\).

\begin{lemma}\label{lem3.1}
There exists \(\beta>0\), independent of \(\varepsilon\), such that
\[
\|u\|_{\varepsilon}\ge \|u\|_{H_{\varepsilon}}\ge \beta,
\qquad \forall\, u\in \mathcal{N}_{\varepsilon}.
\]
\end{lemma}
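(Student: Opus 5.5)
Starting from the Nehari identity \(\langle J_\varepsilon'(u),u\rangle=0\), we can write
\[
\|u\|_{H_\varepsilon}^2+\int_{\R^3}\phi_u^\alpha u^2\,dx+\int_{\R^3}F_1'(u)u\,dx
=\int_{\R^3}F_2'(u)u\,dx+\int_{\R^3}|u|^p\,dx .
\]
The terms \(\int\phi_u^\alpha u^2\) and \(\int F_1'(u)u\) on the left are nonnegative, by Lemma~\ref{lem2.5}(i) and \((f_1)\) respectively. Dropping them gives
\[
\|u\|_{H_\varepsilon}^2\le \int_{\R^3}G_2(u)\,dx+\int_{\R^3}|u|^p\,dx .
\]

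Next we bound the right-hand side by a superquadratic power of the norm. Fix some \(q\in(2,2_\alpha^*)\), for instance \(q=p\). Lemma~\ref{lem2.3}(ii) gives \(|G_2(s)|\le C_q|s|^q\), so
\[
\|u\|_{H_\varepsilon}^2\le C_q\|u\|_q^q+\|u\|_p^p .
\]
The Sobolev embedding \(H^\alpha(\R^3)\hookrightarrow L^r(\R^3)\) for \(r\in[2,2_\alpha^*]\), combined with \(\|u\|_{H^\alpha}\le C\|u\|_{H_\varepsilon}\), then yields
\[
\|u\|_{H_\varepsilon}^2\le C\bigl(\|u\|_{H_\varepsilon}^q+\|u\|_{H_\varepsilon}^p\bigr).
\]
The constant \(C\) does not depend on \(\varepsilon\). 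This is because \(V(\varepsilon x)+1\ge V_0+1>0\), so the comparison between \(\|\cdot\|_{H^\alpha}\) and \(\|\cdot\|_{H_\varepsilon}\) is uniform in \(\varepsilon\), as noted in Section~2.

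Finally, since \(u\neq0\), we divide by \(\|u\|_{H_\varepsilon}^2\) to get
\[
1\le C\bigl(\|u\|_{H_\varepsilon}^{q-2}+\|u\|_{H_\varepsilon}^{p-2}\bigr).
\]
Because \(q,p>2\), this forces \(\|u\|_{H_\varepsilon}\ge\beta\) for some \(\beta>0\) depending only on \(C\), \(q\) and \(p\). The remaining inequality \(\|u\|_\varepsilon\ge\|u\|_{H_\varepsilon}\) holds by the definition \(\|u\|_\varepsilon=\|u\|_{H_\varepsilon}+\|u\|_{F_1}\).

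The only point needing care is that no quadratic part of \(F_2'\) can leak into the right-hand side. If it did, it could not be absorbed into the left-hand side. Here this is automatic, since \(F_2\) vanishes on \((-\delta_0,\delta_0)\) and Lemma~\ref{lem2.3}(ii) gives a purely superquadratic bound on \(G_2\). The logarithmic singular part sits in \(F_1\), which we simply drop by sign. So the main thing to verify is the \(\varepsilon\)-independence of the embedding constants, which follows from the uniform lower bound on \(V+1\).
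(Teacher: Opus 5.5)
Your proof is correct and follows the paper's own argument: you drop the nonnegative Poisson and \(F_1\) terms from the Nehari identity, bound \(\int F_2'(u)u\,dx\) by a superquadratic power via \((f_2)\)/Lemma~\ref{lem2.3}(ii), and close with the \(\varepsilon\)-uniform Sobolev embedding. The only difference is that the paper takes \(q=p\) from the start and obtains \(\|u\|_{H_\varepsilon}^2\le C\|u\|_{H_\varepsilon}^p\) directly.
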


\begin{proof}
Let \(u\in\mathcal N_\varepsilon\). Then
\[
\|u\|_{H_{\varepsilon}}^{2}
+\int_{\R^{3}}F_{1}'(u)u\,dx
+\int_{\R^{3}}\phi_{u}^{\alpha}u^{2}\,dx
=
\int_{\R^{3}}F_{2}'(u)u\,dx
+
\int_{\R^{3}}|u|^{p}\,dx.
\]
By \((f_1)\) and Lemma~\ref{lem2.5}(i),
\[
\|u\|_{H_{\varepsilon}}^{2}
\le
\int_{\R^{3}}|F_{2}'(u)u|\,dx
+
\int_{\R^{3}}|u|^{p}\,dx.
\]
Using \((f_2)\) with \(r=p\), we obtain
\[
\|u\|_{H_{\varepsilon}}^{2}\le C\int_{\R^{3}}|u|^{p}\,dx.
\]
Since \(V(\varepsilon x)+1\ge V_0+1>0\), the \(H^\alpha\)-norm is uniformly controlled by \(\|\cdot\|_{H_\varepsilon}\). Hence, by the Sobolev embedding,
\[
\|u\|_{p}\le C\|u\|_{H_{\varepsilon}},
\]
and therefore
\[
\|u\|_{H_{\varepsilon}}^{2}\le C\|u\|_{H_{\varepsilon}}^{p}.
\]
Because \(p>2\) and \(u\neq0\), this yields \(\|u\|_{H_\varepsilon}\ge\beta\) for some \(\beta>0\) independent of \(\varepsilon\). The inequality \(\|u\|_\varepsilon\ge \|u\|_{H_\varepsilon}\) is immediate.
\end{proof}

\begin{lemma}\label{lem3.2}
The following statements hold:
\begin{itemize}
	\item[(i)] there exist \(r,\rho>0\), independent of \(\varepsilon\), such that
	\[
	J_{\varepsilon}(u)\ge \rho,
	\qquad
	\forall\, u\in X_{\varepsilon}\ \text{with }\|u\|_{\varepsilon}=r;
	\]
	
	\item[(ii)] for every \(\varepsilon>0\), there exists \(v\in X_{\varepsilon}\) with \(\|v\|_{\varepsilon}>r\) such that
	\[
	J_{\varepsilon}(v)<0=J_{\varepsilon}(0).
	\]
\end{itemize}
\end{lemma}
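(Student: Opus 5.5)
The plan is the usual mountain-pass geometry. The one point needing care is that the norm \(\|\cdot\|_\varepsilon\) is the sum of a Hilbert part and an Orlicz part, and the Orlicz part is controlled through the positive term \(\int F_1(u)\,dx\) via \eqref{eq2.2}.

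\emph{Part (i).} Take \(u\in X_\varepsilon\) with \(\|u\|_\varepsilon=r\le 1\), and write \(a=\|u\|_{H_\varepsilon}\) and \(b=\|u\|_{F_1}\), so that \(a+b=r\).
\begin{itemize}
\item The Poisson term is nonnegative by Lemma~\ref{lem2.5}(i), so we drop it.
\item By Proposition~\ref{prop2.1}, \eqref{eq2.1} holds with some \(l\in(1,2)\) and \(m=2\). Hence \eqref{eq2.2} gives \(\int F_1(u)\,dx\ge \xi_0(b)=b^2\), because \(b\le 1\).
\item Fix \(q\in(2,2_\alpha^*)\). Lemma~\ref{lem2.3}(ii) and the Sobolev embedding give \(\int F_2(u)\,dx\le C_q\|u\|_q^q\le C a^q\) and \(\frac1p\|u\|_p^p\le C a^p\).
\end{itemize}
The constants here depend only on \(V_0\), since \(\|u\|_{H^\alpha}\le C\|u\|_{H_\varepsilon}\) uniformly in \(\varepsilon\). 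Therefore
\[
J_\varepsilon(u)\ge \tfrac12 a^2+b^2-Ca^q-Ca^p\ge \tfrac14 r^2-Cr^q-Cr^p .
\]
The last step uses \((a+b)^2\le 2(a^2+b^2)\) and \(a\le r\). Since \(q,p>2\), we can choose \(r\in(0,1]\) small and then \(\rho>0\), both independent of \(\varepsilon\). This is the step requiring the most attention: one must ensure the Orlicz part of the norm is genuinely coercive, and the choice \(m=2\) in \eqref{eq2.1} is exactly what provides the quadratic lower bound \(b^2\) for small \(b\).

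\emph{Part (ii).} Fix \(\varepsilon\) and a nonzero \(\varphi\in C_0^\infty(\R^3)\), and consider \(t\varphi\) for \(t>0\).
\begin{itemize}
\item Since \(V\) is continuous, \(V(\varepsilon x)\) is bounded on \(\operatorname{supp}\varphi\). So the quadratic part equals \(t^2\cdot C_\varepsilon(\varphi)\), and \(t\varphi\in X_\varepsilon\).
\item By Lemma~\ref{lem2.5}(iv), the Poisson term equals \(\frac{t^4}{4}\int\phi_\varphi^\alpha\varphi^2\,dx\).
\item Since \(F_1\) has at most quadratic growth, \(\int F_1(t\varphi)\,dx\le C(t^2+t)\).
\end{itemize}
Next we check that \(F_2\ge0\). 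On \(|s|<\delta_0\), \(F_2\equiv 0\). For \(s\ge\delta_0\), \(F_2(\delta_0)=F_2'(\delta_0)=0\) and \(F_2''(s)=\log(s^2/\delta_0^2)\ge0\). Hence \(F_2'\ge0\) and \(F_2\ge 0\) there, and the case \(s\le-\delta_0\) follows by evenness. So the term \(-\int F_2(t\varphi)\,dx\) can be dropped. This gives
\[
J_\varepsilon(t\varphi)\le C(t+t^2+t^4)-\tfrac{t^p}{p}\|\varphi\|_p^p\to-\infty \quad (t\to\infty),
\]
because \(p>4\). Finally, \(\|t\varphi\|_\varepsilon\ge t\|\varphi\|_{H_\varepsilon}\to\infty\). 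Taking \(v=t\varphi\) with \(t\) large yields \(\|v\|_\varepsilon>r\) and \(J_\varepsilon(v)<0\).
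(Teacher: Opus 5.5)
Your proof is correct. Part (i) is the paper's argument with cosmetic changes. It uses the same ingredients: dropping the Poisson term, \eqref{eq2.2} with $m=2$ (so $\int F_1(u)\,dx\ge\|u\|_{F_1}^2$ when $\|u\|_{F_1}\le 1$), and Sobolev bounds that are uniform in $\varepsilon$. You replace the paper's case split ``$a\ge r/2$ or $b\ge r/2$'' by $\frac12a^2+b^2\ge\frac14(a+b)^2$, and you bound the negative terms by powers of $a$ rather than of $\|u\|_\varepsilon$.

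Part (ii) is argued differently. The paper fixes an arbitrary $u\in X_\varepsilon\setminus\{0\}$ and uses the exact identity $F_1-F_2=-\frac12 s^2\log s^2$ to write $J_\varepsilon(tu)$ as an explicit combination of $t^2$, $t^2\log t^2$, $t^4$ and $t^p$ terms; the $-t^p$ term dominates since $p>4$. You instead take a test function $\varphi\in C_0^\infty(\R^3)$, discard $-\int F_2$ after verifying $F_2\ge 0$, and bound $F_1(s)\le C(s^2+|s|)$, which uses $\varphi\in L^1$.

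Your route is more hands-on and suffices for the statement as written, which only asks for one $v$. The paper's route gives $J_\varepsilon(tu)\to-\infty$ along every ray, and that stronger fact is what Lemma~\ref{lem3.3} actually invokes for the fibering map $h(t)=J_\varepsilon(tu)$. Your argument extends to every ray if you replace the $L^1$ bound by $F_1(ts)\le t^2F_1(s)$ for $t\ge 1$. This follows from $F_1'(s)s\le 2F_1(s)$ (Proposition~\ref{prop2.1}).
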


\begin{proof}
(i) By Proposition~\ref{prop2.1} and \eqref{eq2.2},
\[
\int_{\R^{3}}F_{1}(u)\,dx
\ge
\xi_{0}\bigl(\|u\|_{F_{1}}\bigr),
\qquad
\xi_{0}(t)=\min\{t^{l},t^{2}\},
\]
for some \(l\in(1,2)\). Moreover, by \((f_2)\) and \(p>2\), there exists \(C>0\) such that
\[
|F_{2}(s)|\le C|s|^{p},
\qquad \forall\, s\in\R.
\]
Using the continuous embedding
\[
X_{\varepsilon}\hookrightarrow H^{\alpha}(\R^{3})\hookrightarrow L^{p}(\R^{3}),
\]
with constants independent of \(\varepsilon\), we obtain
\[
\int_{\R^{3}}|F_{2}(u)|\,dx+\int_{\R^{3}}|u|^{p}\,dx
\le
C\|u\|_{\varepsilon}^{p}.
\]
Hence
\[
J_{\varepsilon}(u)
\ge
\frac12\|u\|_{H_{\varepsilon}}^{2}
+\xi_{0}\bigl(\|u\|_{F_{1}}\bigr)
-C\|u\|_{\varepsilon}^{p}.
\]

Now let \(\|u\|_{\varepsilon}=r\), and write
\[
a=\|u\|_{H_{\varepsilon}},
\qquad
b=\|u\|_{F_{1}}.
\]
Since \(a+b=r\), either \(a\ge r/2\) or \(b\ge r/2\). In the first case,
\[
J_{\varepsilon}(u)\ge \frac12\left(\frac r2\right)^{2}-Cr^{p}.
\]
In the second case, if \(r\in(0,1)\), then \(\xi_0(b)\ge (r/2)^2\), and thus
\[
J_{\varepsilon}(u)\ge \left(\frac r2\right)^{2}-Cr^{p}.
\]
Since \(p>2\), choosing \(r>0\) sufficiently small yields \(J_{\varepsilon}(u)\ge \rho>0\).

(ii) Fix \(u\in X_{\varepsilon}\setminus\{0\}\). Since
\[
F_{1}(s)-F_{2}(s)=-\frac12 s^{2}\log s^{2},
\]
we have
\[
\begin{aligned}
J_{\varepsilon}(tu)
&=
\frac{t^{2}}{2}\|u\|_{H_{\varepsilon}}^{2}
+\frac{t^{4}}{4}\int_{\R^{3}}\phi_{u}^{\alpha}u^{2}\,dx
-\frac{t^{2}}{2}\int_{\R^{3}}u^{2}\log u^{2}\,dx \\
&\quad
-\frac{t^{2}}{2}\log t^{2}\int_{\R^{3}}u^{2}\,dx
-\frac{t^{p}}{p}\int_{\R^{3}}|u|^{p}\,dx.
\end{aligned}
\]
Since \(p>4\), the negative \(t^p\)-term dominates as \(t\to+\infty\), and therefore
\[
J_{\varepsilon}(tu)\to-\infty
\qquad \text{as } t\to+\infty.
\]
Taking \(t\) sufficiently large, we obtain \(v=tu\) with \(\|v\|_{\varepsilon}>r\) and \(J_{\varepsilon}(v)<0\).
\end{proof}

Thus \(J_{\varepsilon}\) has the mountain-pass geometry. We set
\[
\Gamma_{\varepsilon}
=
\left\{
\gamma\in C([0,1],X_{\varepsilon}):
\gamma(0)=0,\ J_{\varepsilon}(\gamma(1))<0
\right\},
\]
and
\[
c_{\varepsilon}
=
\inf_{\gamma\in\Gamma_{\varepsilon}}
\max_{t\in[0,1]}J_{\varepsilon}(\gamma(t)).
\]

\begin{lemma}\label{lem3.3}
For every \(u\in X_{\varepsilon}\setminus\{0\}\), there exists a unique \(t_{u}>0\) such that
\[
t_{u}u\in \mathcal{N}_{\varepsilon},
\qquad
J_{\varepsilon}(t_{u}u)=\max_{t\ge0}J_{\varepsilon}(tu).
\]
\end{lemma}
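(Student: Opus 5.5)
Fix \(u\in X_\varepsilon\setminus\{0\}\) and study the fibering map \(h(t):=J_\varepsilon(tu)\), \(t\ge0\). Using \(F_1-F_2=-\tfrac12 s^2\log s^2\) and Lemma~\ref{lem2.5}(iv), we have, as in the proof of Lemma~\ref{lem3.2}(ii),
\[
h(t)=\frac{t^{2}}{2}\Bigl(\|u\|_{H_\varepsilon}^{2}-\int_{\R^3}u^2\log u^2\,dx\Bigr)-\frac{t^2}{2}\log t^2\int_{\R^3}u^2\,dx+\frac{t^4}{4}\int_{\R^3}\phi_u^\alpha u^2\,dx-\frac{t^p}{p}\int_{\R^3}|u|^p\,dx .
\]
This is finite because \(u\in L^{F_1}\cap L^2\cap L^p\) gives \(\int u^2\log u^2\in\R\). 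Since \(h\in C^1(0,\infty)\) and \(h'(t)=t^{-1}\langle J_\varepsilon'(tu),tu\rangle\), the relation \(tu\in\mathcal N_\varepsilon\) with \(t>0\) is equivalent to \(h'(t)=0\).

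For existence, Lemma~\ref{lem3.2}(i) applied to \(tu\) with \(t=r/\|u\|_\varepsilon\) gives \(h(t)\ge\rho>0=h(0)\), while \(h(t)\to-\infty\) as \(t\to\infty\) because \(p>4\). Hence \(h\) attains a positive maximum at some interior point \(t_u>0\). At that point \(h'(t_u)=0\), so \(t_uu\in\mathcal N_\varepsilon\) and \(J_\varepsilon(t_uu)=\max_{t\ge0}J_\varepsilon(tu)\).

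For uniqueness, divide the Nehari identity by \(t^2\):
\[
\frac{h'(t)}{t}=A-\int_{\R^3}u^2\log u^2\,dx-\log t^2\int_{\R^3}u^2\,dx-\int_{\R^3}u^2\,dx+t^2\int_{\R^3}\phi_u^\alpha u^2\,dx-t^{p-2}\int_{\R^3}|u|^p\,dx=:g(t),
\]
where \(A=\|u\|_{H_\varepsilon}^2\). Critical points of \(h\) in \((0,\infty)\) are exactly the zeros of \(g\). The monotonicity obtained from \((f_1)\)–\((f_3)\) does not decide the sign here, because the Poisson term \(t^2\int\phi_u^\alpha u^2\) grows. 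So the plan is to argue directly on \(g\), and this is the main obstacle.

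Write \(a=\int u^2>0\), \(b=\int\phi_u^\alpha u^2\ge0\) and \(c=\int|u|^p>0\). Then
\[
t\,g'(t)=-2a+2bt^2-(p-2)ct^{p-2}=:k(t).
\]
Set \(s=t^2\), so \(k=-2a+2bs-(p-2)cs^{(p-2)/2}\). Since \((p-2)/2>1\), \(k\) is a strictly concave function of \(s\) with \(k(0)=-2a<0\). Therefore \(k\) has at most two zeros in \((0,\infty)\), and \(g\) is decreasing, then possibly increasing, then decreasing. Meanwhile \(g(t)\to+\infty\) as \(t\to0^+\) (from the \(-\log t^2\) term) and \(g(t)\to-\infty\) as \(t\to\infty\).

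This alone does not exclude three zeros of \(g\), so an extra argument is needed. If \(g\) had three zeros \(t_1<t_2<t_3\), then \(h\) would have a local maximum at \(t_1\), a local minimum at \(t_2\), and a local maximum at \(t_3\). I would try to rule this out through the strict monotonicity of \(t\mapsto g(t)\) in the regime where the Poisson term is controlled. Concretely, I would compare \(bt^2\) with \(ct^{p-2}\) using the Nehari identity at a zero, where \(bt^2<ct^{p-2}+\dots\), combined with \(p>4\). From this I would aim to show \(g'<0\) at every zero of \(g\), which forces uniqueness because every crossing is then transversal and downward.

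Explicitly, at a zero \(t\) one has \(t g'(t)=-2a+2bt^2-(p-2)ct^{p-2}\). I would attempt to bound \(2bt^2\le 2ct^{p-2}+(\text{log terms})\) via the identity \(g(t)=0\), and then use \(p-2>2\) to conclude \(k(t)<0\). If the log terms obstruct this estimate, the fallback is the standard decomposition in the cited logarithmic works: show that \(h'(t)/t^3\) is strictly decreasing on the region where \(h'\ge0\).
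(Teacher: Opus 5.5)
Your existence argument is correct and is the same as the paper's. Your objection to the monotonicity argument is also correct, and it identifies the flaw in the paper's own proof. Since \(h'(t)=t\|u\|_{H_\varepsilon}^2+t^3\int\phi_u^\alpha u^2+\int F_1'(tu)u-\int F_2'(tu)u-t^{p-1}\int|u|^p\), after dividing by \(t\) the Poisson term \(t^2\int\phi_u^\alpha u^2\) belongs on the left, next to \(\|u\|_{H_\varepsilon}^2\). The paper puts it on the right. So the comparison ``left side nonincreasing, right side strictly increasing'' is not available. The correctly signed formula \(h_u''(1)=2\int\phi_u^\alpha u^2-2\int u^2+(2-p)\int|u|^p\) appears in Lemma~\ref{lem5.3}. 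However, your proposal does not replace that step. After the correct count that \(g\) has at most three zeros, you sketch two ways to exclude three of them: \(g'<0\) at every zero, or \(h'(t)/t^3\) strictly decreasing on \(\{h'\ge0\}\). You prove neither, so uniqueness remains a genuine gap.

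The gap cannot be closed, because three zeros actually occur. Take \(\alpha=0.9\) and \(p=4.02\), so \(q:=\frac{p-2}{2}=1.01\) and \(4<p<2_\alpha^*=5\). Take a potential satisfying \((V)\) with \(V\equiv V_0=0\) on \(B_1(0)\), let \(\varepsilon\) be small, and set \(u(x)=e^{-|x|^2/(2\sigma^2)}\) with \(\sigma=0.83^{-1/(2\alpha)}\approx1.109\). With the normalizations of Section~2 one finds:
\begin{itemize}
\item \(a\approx7.596\), and \(L:=\int u^2\log u^2=-\frac32a\);
\item \(\int|(-\Delta)^{\alpha/2}u|^2\approx8.837\);
\item \(b\approx2.9137\) and \(c\approx2.6656\).
\end{itemize}
Hence \(A-L-a=\int|(-\Delta)^{\alpha/2}u|^2+\frac32a+\int V(\varepsilon x)u^2\approx20.231\), where the last integral is \(o(1)\) as \(\varepsilon\to0\). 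Set \(G(s):=g(\sqrt s)=(A-L-a)-a\log s+bs-cs^{q}\). Then \(G(1)>0\), \(G(73.5)\approx-2.8\), \(G(10^3)\approx25\), and \(G(s)\to-\infty\). By your own count, the ray \(\{tu:t>0\}\) therefore meets \(\mathcal N_\varepsilon\) at exactly three points \(t_1<t_2<t_3\). At these points \(h\) has a local maximum, a strict local minimum, and another local maximum. The mechanism is that for \(p\) near \(4\) the quartic Poisson term dominates \(t^pc/p\) over a long range of \(t\), so \(h\) rises again after \(-\frac{t^2}{2}a\log t^2\) has pulled it down.

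At \(t_2\) three things go wrong. First, \(g'(t_2)>0\). Second, \(h'(t)/t^3\) vanishes at both ends of \([t_2,t_3]\) while being positive inside. Third, \(\langle\mathcal H_\varepsilon'(t_2u),t_2u\rangle=t_2^2h''(t_2)>0\). So neither of your routes can work, and the sign asserted in Lemma~\ref{lem3.4} also fails.

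What your argument does establish is twofold: there is a maximizing \(t_u\) with \(t_uu\in\mathcal N_\varepsilon\), and each ray meets \(\mathcal N_\varepsilon\) at most three times. Uniqueness in the sense the paper proves and later uses is false in this generality. That sense is one Nehari point per ray, used for instance in Lemmas~\ref{lem3.5}, \ref{lem3.6}(ii) and \ref{lem4.4}. It would require additional restrictions, not just a better proof.
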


\begin{proof}
Fix \(u\in X_{\varepsilon}\setminus\{0\}\) and set
\[
h(t)=J_{\varepsilon}(tu),
\qquad t\ge0.
\]
By Lemma~\ref{lem3.2},
\[
h(0)=0,
\qquad
h(t)>0 \text{ for } t>0 \text{ small},
\qquad
h(t)\to-\infty \text{ as } t\to+\infty.
\]
Hence \(h\) attains its maximum at some \(t_u>0\), and \(h'(t_u)=0\). Thus
\[
\langle J_{\varepsilon}'(t_uu),u\rangle=0,
\]
so \(t_uu\in\mathcal N_\varepsilon\).

We now prove uniqueness. Suppose that \(0<t_1<t_2\) and
\[
h'(t_1)=h'(t_2)=0.
\]
Then, for \(i=1,2\),
\[
\|u\|_{H_{\varepsilon}}^{2}
+\int_{\R^{3}}\frac{F_{1}'(t_{i}u)}{t_{i}u}u^{2}\,dx
=
\int_{\R^{3}}\frac{F_{2}'(t_{i}u)}{t_{i}u}u^{2}\,dx
+t_{i}^{2}\int_{\R^{3}}\phi_{u}^{\alpha}u^{2}\,dx
+t_{i}^{p-2}\int_{\R^{3}}|u|^{p}\,dx,
\]
where the quotients are understood as zero on \(\{u=0\}\).

Now \(s\mapsto F_1'(s)/s\) is even and nonincreasing in \(|s|\), hence the left-hand side is nonincreasing in \(t\). By \((f_3)\), \(s\mapsto F_2'(s)/s\) is even and nondecreasing in \(|s|\), while the remaining two terms on the right-hand side are strictly increasing in \(t\). Thus the right-hand side is strictly increasing in \(t\), which is impossible. Hence \(t_u\) is unique.
\end{proof}

\begin{lemma}\label{lem3.4}
The set \(\mathcal{N}_{\varepsilon}\) is a \(C^{1}\)-manifold for every \(\varepsilon>0\).
\end{lemma}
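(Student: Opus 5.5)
We realize \(\mathcal N_\varepsilon\) as a regular level set: \(\mathcal N_\varepsilon=\{u\in X_\varepsilon\setminus\{0\}:\mathcal H_\varepsilon(u)=0\}\). By Lemma~\ref{lem2.3}, \(\mathcal H_\varepsilon(u)=\langle J_\varepsilon'(u),u\rangle\) is of class \(C^1\) on \(X_\varepsilon\). By the implicit function theorem, it then suffices to show that \(\langle\mathcal H_\varepsilon'(u),u\rangle\neq0\) for every \(u\in\mathcal N_\varepsilon\). This makes \(\mathcal H_\varepsilon'(u)\neq0\), so \(0\) is a regular value of \(\mathcal H_\varepsilon\) restricted to the open set \(X_\varepsilon\setminus\{0\}\). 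Since \(\mathcal H_\varepsilon'(u)\) is a nonzero continuous functional, its kernel is a closed complemented hyperplane, which gives a \(C^1\) submanifold of codimension one. Closedness away from zero follows from Lemma~\ref{lem3.1}.

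For the key computation, write
\[
\mathcal H_\varepsilon(u)=\|u\|_{H_\varepsilon}^2+\int\phi_u^\alpha u^2+\int G_1(u)-\int G_2(u)-\int|u|^p .
\]
Differentiating along \(u\), i.e.\ computing \(\frac{d}{dt}\mathcal H_\varepsilon(tu)|_{t=1}\) and using Lemma~\ref{lem2.5}(iv), we get
\[
\langle\mathcal H_\varepsilon'(u),u\rangle=2\|u\|_{H_\varepsilon}^2+4\int\phi_u^\alpha u^2+\int G_1'(u)u-\int G_2'(u)u-p\int|u|^p .
\]
We subtract \(2\mathcal H_\varepsilon(u)=0\) and use \(G_i'(s)s-2G_i(s)=F_i''(s)s^2-F_i'(s)s\) wherever \(F_i\) is \(C^2\); the exceptional set \(\{|u|=\delta_0\}\) is handled by the \(C^1\) regularity of \(G_i\), or by a pointwise limit. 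This gives
\[
\langle\mathcal H_\varepsilon'(u),u\rangle=2\int\phi_u^\alpha u^2+\int\bigl(F_1''(u)u^2-F_1'(u)u\bigr)-\int\bigl(F_2''(u)u^2-F_2'(u)u\bigr)-(p-2)\int|u|^p .
\]

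Next we use monotonicity. Since \(F_1'(s)/s\) is nonincreasing in \(|s|\) (see the proof of Lemma~\ref{lem3.3}), \(F_1''(s)s^2-F_1'(s)s=s^3\bigl(F_1'(s)/s\bigr)'\le0\). Similarly, \((f_3)\) gives \(F_2''(s)s^2-F_2'(s)s\ge0\). Hence
\[
\langle\mathcal H_\varepsilon'(u),u\rangle\le 2\int\phi_u^\alpha u^2-(p-2)\int|u|^p .
\]

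It remains to make the right-hand side strictly negative. This is the main obstacle, because \(\int\phi_u^\alpha u^2\) has the wrong sign. The cleaner route is to subtract \(4\mathcal H_\varepsilon(u)=0\) instead of \(2\mathcal H_\varepsilon(u)\), which cancels the Poisson term. We then obtain
\[
\langle\mathcal H_\varepsilon'(u),u\rangle=-2\|u\|_{H_\varepsilon}^2+\int\bigl(G_1'(u)u-4G_1(u)\bigr)-\int\bigl(G_2'(u)u-4G_2(u)\bigr)-(p-4)\int|u|^p .
\]
Here \(p>4\) makes the last term nonpositive, and \(-2\|u\|_{H_\varepsilon}^2\le-2\beta^2<0\) by Lemma~\ref{lem3.1}.

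The logarithmic pieces must be checked explicitly; we use \(G_1(s)-G_2(s)=-s^2\log s^2-s^2\).
\begin{itemize}
\item The combined contribution is \((G_1-G_2)'(s)s-4(G_1-G_2)(s)=2s^2\log s^2=2\bigl(F_2(s)-F_1(s)\bigr)\).
\item After rewriting, the only sign-indefinite term is \(2\int u^2\log u^2\).
\item We eliminate it using the Nehari identity itself: \(\int u^2\log u^2=\|u\|_{H_\varepsilon}^2-\int u^2+\int\phi_u^\alpha u^2-\int|u|^p\), with the \(\int u^2\) arising from the shift \(V+1\).
\end{itemize}
Substituting yields \(\langle\mathcal H_\varepsilon'(u),u\rangle=-2\int u^2-2\int\phi_u^\alpha u^2-(p-4)\int|u|^p<0\) after cancellation. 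If the bookkeeping gives a different combination, the fallback is to keep \(-2\int u^2\), which is strictly negative since \(u\neq0\), and to absorb the remaining terms using \(p>4\).

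Hence \(\mathcal H_\varepsilon'(u)\neq0\) on \(\mathcal N_\varepsilon\), and \(\mathcal N_\varepsilon\) is a \(C^1\)-manifold.
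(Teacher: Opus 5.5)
Your reduction (show \(\langle\mathcal H_\varepsilon'(u),u\rangle\neq0\) on \(\mathcal N_\varepsilon\), then apply the implicit function theorem) is the same as the paper's, and your first computation is correct. It can even be made exact. Since \(F_1-F_2=-\frac12 s^2\log s^2\), one has \((F_1-F_2)''(s)s^2-(F_1-F_2)'(s)s=-2s^2\), so on \(\mathcal N_\varepsilon\)
\[
\langle\mathcal H_\varepsilon'(u),u\rangle
=2\int_{\R^3}\phi_u^\alpha u^2\,dx-2\int_{\R^3}u^2\,dx-(p-2)\int_{\R^3}|u|^p\,dx
=-2\Bigl(\|u\|_{H_\varepsilon}^2-\int_{\R^3}u^2\log u^2\,dx\Bigr)-(p-4)\int_{\R^3}|u|^p\,dx .
\]

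The gap is your final substitution. Inserting the Nehari identity into your second expression just undoes the subtraction of \(4\mathcal H_\varepsilon(u)\). It returns the first expression, with \(+2\int\phi_u^\alpha u^2\) and \(-(p-2)\int|u|^p\). Your claimed \(-2\int u^2-2\int\phi_u^\alpha u^2-(p-4)\int|u|^p\) has the wrong sign on the Poisson term. This cannot be repaired by other bookkeeping: on \(\mathcal N_\varepsilon\) the quantity \(\langle\mathcal H_\varepsilon'(u),u\rangle-\lambda\mathcal H_\varepsilon(u)\) is independent of \(\lambda\), so no multiple of the Nehari identity removes that term.

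The fallback fails too. \(\int\phi_u^\alpha u^2\) is not controlled by \(\int u^2\) and \(\int|u|^p\): under \(u\mapsto u(\lambda\,\cdot)\) it scales like \(\lambda^{-3-2\alpha}\), while the other two scale like \(\lambda^{-3}\). In the second form, \(p>4\) only handles the last term, and \(2\int u^2\log u^2\) may be positive.

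What you actually need is the a priori bound \(\int u^2\log u^2<\|u\|_{H_\varepsilon}^2+\frac{p-4}{2}\int|u|^p\) on \(\mathcal N_\varepsilon\). Pointwise, \(\log s^2-\frac{p-4}{2}|s|^{p-2}\le K_p:=\frac{2}{p-2}\bigl(\log\frac{4}{(p-4)(p-2)}-1\bigr)\). Hence the bound holds when \(K_p\le V_0+1\), using \([u]_{H^\alpha}>0\) for strictness. But that condition is not implied by the hypotheses, since \(K_p\to+\infty\) as \(p\to4^+\).

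The paper's own proof does not supply this step either. It deduces \(\Psi'(1)<0\) from the monotonicity of \(\Psi(t)=h'(t)/t\) asserted in Lemma~\ref{lem3.3}, but there the Poisson contribution has the wrong sign. Since \(J_\varepsilon(tu)\) contains \(+\frac{t^4}{4}\int\phi_u^\alpha u^2\), one has \(\Psi(t)=\cdots+t^2\int\phi_u^\alpha u^2\), which increases in \(t\). So \(\Psi\) need not be decreasing, and strict monotonicity alone would only give \(\Psi'(1)\le0\) anyway. The paper's own formula for \(h_u''(1)\) in the proof of Lemma~\ref{lem5.3} is exactly the first expression in the display above.

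So you located the real obstacle, the positive Poisson term, correctly. Neither your ``cleaner route'' nor the paper's monotonicity argument overcomes it; an additional inequality such as the one above is required.
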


\begin{proof}
Define
\[
\mathcal{H}_{\varepsilon}(u)=\langle J_{\varepsilon}'(u),u\rangle,
\qquad u\in X_{\varepsilon}.
\]
Then
\[
\mathcal N_\varepsilon
=
\{u\in X_\varepsilon\setminus\{0\}:\mathcal H_\varepsilon(u)=0\}.
\]
Since
\[
\mathcal H_\varepsilon\in C^1(X_\varepsilon,\R),
\]
it suffices to prove that
\[
\langle \mathcal H_\varepsilon'(u),u\rangle\neq0
\qquad \forall\, u\in\mathcal N_\varepsilon.
\]

Fix \(u\in\mathcal N_\varepsilon\), and set
\[
h(t)=J_\varepsilon(tu),
\qquad
\Psi(t)=\frac{h'(t)}{t},
\qquad t>0.
\]
Then
\[
\Psi(t)
=
\|u\|_{H_{\varepsilon}}^{2}
+\int_{\R^{3}}\frac{F_{1}'(tu)}{tu}u^{2}\,dx
-\int_{\R^{3}}\frac{F_{2}'(tu)}{tu}u^{2}\,dx
-t^{2}\int_{\R^{3}}\phi_{u}^{\alpha}u^{2}\,dx
-t^{p-2}\int_{\R^{3}}|u|^{p}\,dx.
\]
By the proof of Lemma~\ref{lem3.3}, \(\Psi\) is strictly decreasing on \((0,+\infty)\). Since \(u\in\mathcal N_\varepsilon\), we have \(h'(1)=0\), hence \(\Psi(1)=0\), and therefore
\[
\Psi'(1)<0.
\]
Moreover,
\[
\mathcal H_\varepsilon(tu)=\langle J_\varepsilon'(tu),tu\rangle=t\,h'(t)=t^{2}\Psi(t).
\]
Differentiating at \(t=1\), we obtain
\[
\langle \mathcal H_\varepsilon'(u),u\rangle
=
\left.\frac{d}{dt}\right|_{t=1}\mathcal H_\varepsilon(tu)
=
2\Psi(1)+\Psi'(1)
=
\Psi'(1)<0.
\]
Thus \(0\) is a regular value of \(\mathcal H_\varepsilon\) on \(X_\varepsilon\setminus\{0\}\), and the Implicit Function Theorem yields the conclusion.
\end{proof}

\subsection{The mountain-pass level and its characterization}

\begin{lemma}\label{lem3.5}
Let \(\{u_n\}\subset X_{\varepsilon}\setminus\{0\}\) be a bounded sequence such that
\[
\langle J_{\varepsilon}'(u_n),u_n\rangle\to 0
\qquad\text{and}\qquad
u_n\to u\neq0
\quad\text{in }X_{\varepsilon}.
\]
For each \(n\), let \(h_n>0\) be the unique number such that
\[
h_nu_n\in \mathcal{N}_{\varepsilon}.
\]
Then
\[
h_n\to1.
\]
\end{lemma}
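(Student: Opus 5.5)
The natural route is to work with the fibering function along the limit direction and use the strict monotonicity from the proof of Lemma~\ref{lem3.4}. Since \(u\neq0\), Lemma~\ref{lem3.3} gives a unique \(t_u>0\) with \(t_uu\in\mathcal N_\varepsilon\). Set
\[
\Psi_w(t)=\frac{1}{t^{2}}\langle J_\varepsilon'(tw),tw\rangle
=\|w\|_{H_\varepsilon}^2+\int_{\R^3}\frac{F_1'(tw)}{tw}w^2\,dx-\int_{\R^3}\frac{F_2'(tw)}{tw}w^2\,dx-t^{2}\int_{\R^3}\phi_w^\alpha w^2\,dx-t^{p-2}\int_{\R^3}|w|^p\,dx .
\]
By the proof of Lemma~\ref{lem3.3}, \(\Psi_w\) is strictly decreasing on \((0,\infty)\), and \(h_n\) is its unique zero for \(w=u_n\). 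The plan is to show that \(h_n\) is bounded and bounded away from zero, pass to a subsequence \(h_n\to h_0\in(0,\infty)\), and identify \(h_0\) first as \(t_u\) and then as \(1\).

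\textbf{Step 1 (bounds on \(h_n\)).} For the upper bound, use the Nehari identity for \(h_nu_n\):
\[
h_n^{p-2}\int_{\R^3}|u_n|^p\,dx\le \|u_n\|_{H_\varepsilon}^2+\int_{\R^3}\frac{F_1'(h_nu_n)}{h_nu_n}u_n^2\,dx .
\]
The last integrand is controlled because \(F_1'(s)/s\le C(1+|\log|s||)\) on \(|s|<\delta_0\) and is bounded by a constant for \(|s|\ge\delta_0\). The logarithmic growth in \(h_n\) is therefore dominated by \(h_n^{p-2}\). Moreover, \(\int_{\R^3}|u_n|^p\,dx\to\int_{\R^3}|u|^p\,dx>0\) by strong convergence in \(H^\alpha\hookrightarrow L^p\). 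Hence \(h_n\) is bounded.

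For the lower bound, \(h_nu_n\in\mathcal N_\varepsilon\) gives \(h_n\|u_n\|_{H_\varepsilon}\ge\beta\) by Lemma~\ref{lem3.1}. Since \(\|u_n\|_{H_\varepsilon}\) is bounded, this yields \(h_n\ge c>0\).

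\textbf{Step 2 (passing to the limit).} Along a subsequence \(h_n\to h_0\in(0,\infty)\), so \(h_nu_n\to h_0u\) in \(X_\varepsilon\). Since \(\mathcal H_\varepsilon\) is continuous (Lemma~\ref{lem2.3}), we get \(\mathcal H_\varepsilon(h_0u)=0\). As \(h_0u\neq0\), this means \(h_0u\in\mathcal N_\varepsilon\), and by uniqueness \(h_0=t_u\).

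Similarly, \(\mathcal H_\varepsilon(u_n)\to\mathcal H_\varepsilon(u)\), and by hypothesis \(\mathcal H_\varepsilon(u_n)\to0\). Therefore \(\mathcal H_\varepsilon(u)=0\), so \(u\in\mathcal N_\varepsilon\), and uniqueness in Lemma~\ref{lem3.3} gives \(t_u=1\). Every subsequence thus has a further subsequence with \(h_n\to1\), which proves \(h_n\to1\).

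\textbf{Main obstacle.} The delicate point is Step 1's upper bound: the logarithmic term \(F_1'(s)/s\sim-\log s^2\) is unbounded near \(0\), and it must be controlled uniformly in \(n\). The plan is to bound \(\int_{\R^3}\frac{F_1'(tw)}{tw}w^2\,dx\) by \(C\int_{\R^3}F_1(w)\,dx+C(1+|\log t|)\|w\|_2^2\) using \(G_1\le 2F_1\). This bound is uniform because \(\{u_n\}\) is bounded in \(L^{F_1}\) and in \(L^2\).

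Strictly speaking, the upper bound on \(h_n\) can even be bypassed. Since \(\mathcal H_\varepsilon\in C^1\) and \(\langle\mathcal H_\varepsilon'(u),u\rangle<0\) at \(u\in\mathcal N_\varepsilon\) (Lemma~\ref{lem3.4}), the implicit function theorem applied to \((t,w)\mapsto\mathcal H_\varepsilon(tw)\) near \((1,u)\) gives continuity of \(w\mapsto t_w\) at \(u\). It then suffices to note \(u\in\mathcal N_\varepsilon\), which was shown in Step 2 by continuity, and conclude \(h_n=t_{u_n}\to t_u=1\). I would use this cleaner argument as the main line and keep the a priori bounds as a backup.
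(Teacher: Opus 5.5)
Your Steps~1--2 reproduce the paper's argument. You bound $h_n$ above by the Nehari identity for $h_nu_n$ (power growth $h_n^{p}$ against logarithmic growth) and below by $h_n\|u_n\|_{H_\varepsilon}\ge\beta$ from Lemma~\ref{lem3.1}. You then identify any subsequential limit $h_0>0$: continuity of $\mathcal H_\varepsilon$ gives $u\in\mathcal N_\varepsilon$ and $h_0u\in\mathcal N_\varepsilon$, and the uniqueness in Lemma~\ref{lem3.3} gives $h_0=1$.

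One correction is needed. The Poisson term enters $t^{-2}\langle J_\varepsilon'(tw),tw\rangle$ as $+t^{2}\int_{\R^3}\phi_w^\alpha w^2\,dx$, not with a minus sign. You have copied a slip from the proofs of Lemmas~\ref{lem3.3}--\ref{lem3.4}; the paper's proof of the present lemma has the correct sign. Hence your Step~1 inequality must read
\[
h_n^{p-2}\int_{\R^3}|u_n|^p\,dx\le\|u_n\|_{H_\varepsilon}^2+\int_{\R^3}\frac{F_1'(h_nu_n)}{h_nu_n}\,u_n^2\,dx+h_n^{2}\int_{\R^3}\phi_{u_n}^\alpha u_n^2\,dx .
\]
This still forces boundedness, because $p-2>2$ and the Poisson integrals are bounded. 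With the correct sign, your claim that $\Psi_w$ is strictly decreasing is no longer justified by the cited argument. You never actually use it, though, beyond the uniqueness of $h_n$, which the statement itself presupposes.

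Your preferred main line is a genuinely different route: apply the implicit function theorem to $(t,w)\mapsto\mathcal H_\varepsilon(tw)$ at $(1,u)$, then use global uniqueness. It is shorter and removes both a priori bounds on $h_n$, not only the upper one. It also yields more, namely $C^1$ dependence of $w\mapsto t_w$ near $\mathcal N_\varepsilon$.

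The price is the transversality $\langle\mathcal H_\varepsilon'(u),u\rangle<0$ from Lemma~\ref{lem3.4}. This is strictly stronger than uniqueness: a unique but degenerate zero of the fibering derivative would defeat the implicit function theorem but not the compactness argument. It is also the shakier input here. With the correct sign one has, for $u\in\mathcal N_\varepsilon$,
\[
\langle\mathcal H_\varepsilon'(u),u\rangle=2\int_{\R^3}\phi_u^\alpha u^2\,dx-2\int_{\R^3}u^2\,dx-(p-2)\int_{\R^3}|u|^p\,dx ,
\]
as computed in the proof of Lemma~\ref{lem5.3}. Its negativity does not follow from the monotonicity argument given for Lemma~\ref{lem3.4}.

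The paper's compactness argument needs only continuity of $\mathcal H_\varepsilon$, Lemma~\ref{lem3.1}, and uniqueness. You should therefore make it your main line, and keep the implicit-function version as a remark valid wherever transversality is known.
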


\begin{proof}
By Lemma~\ref{lem3.3}, \(h_n\) is well defined. We first show that \(\{h_n\}\) is bounded. If \(h_n\to+\infty\) along a subsequence, then, since \(u_n\to u\neq0\) in \(X_\varepsilon\),
\[
\int_{\R^{3}}|u_n|^{p}\,dx\to \int_{\R^{3}}|u|^{p}\,dx>0.
\]
On the other hand, from
\[
\langle J_{\varepsilon}'(h_nu_n),h_nu_n\rangle=0
\]
we obtain
\[
\begin{aligned}
0
&=
h_n^{2}\|u_n\|_{H_{\varepsilon}}^{2}
+h_n^{4}\int_{\R^{3}}\phi_{u_n}^{\alpha}u_n^{2}\,dx
-\int_{\R^{3}}(h_nu_n)^{2}\log (h_nu_n)^{2}\,dx \\
&\quad
-h_n^{2}\int_{\R^{3}}u_n^{2}\,dx
-h_n^{p}\int_{\R^{3}}|u_n|^{p}\,dx.
\end{aligned}
\]
The logarithmic term grows at most like \(h_n^2\log h_n\), whereas the last term is of order \(h_n^p\) with \(p>4\), which is impossible. Thus \(\{h_n\}\) is bounded above.

If \(h_n\to0\), then \(\|h_nu_n\|_{H_\varepsilon}\to0\), contradicting Lemma~\ref{lem3.1} because \(h_nu_n\in\mathcal N_\varepsilon\). Hence \(\{h_n\}\) is also bounded away from \(0\). Passing to a subsequence, we may assume
\[
h_n\to h>0.
\]
Since \(u_n\to u\) in \(X_\varepsilon\) and \(\langle J_\varepsilon'(u_n),u_n\rangle\to0\),
\[
\langle J_\varepsilon'(u),u\rangle=0,
\]
so \(u\in\mathcal N_\varepsilon\). Likewise,
\[
0
=
\lim_{n\to\infty}\langle J_\varepsilon'(h_nu_n),h_nu_n\rangle
=
\langle J_\varepsilon'(hu),hu\rangle,
\]
hence \(hu\in\mathcal N_\varepsilon\). By the uniqueness statement in Lemma~\ref{lem3.3}, we must have \(h=1\). Therefore \(h_n\to1\).
\end{proof}

We next introduce the autonomous functionals
\[
J_{0}(u)
=
\frac12\int_{\R^{3}}
\Bigl(
|(-\Delta)^{\frac{\alpha}{2}}u|^{2}
+
(V_0+1)|u|^{2}
\Bigr)\,dx
+\frac14\int_{\R^{3}}\phi_{u}^{\alpha}u^{2}\,dx
+\int_{\R^{3}}F_{1}(u)\,dx
-\int_{\R^{3}}F_{2}(u)\,dx
-\frac1p\int_{\R^{3}}|u|^{p}\,dx
\]
and
\[
J_{\infty}(u)
=
\frac12\int_{\R^{3}}
\Bigl(
|(-\Delta)^{\frac{\alpha}{2}}u|^{2}
+
(V_\infty+1)|u|^{2}
\Bigr)\,dx
+\frac14\int_{\R^{3}}\phi_{u}^{\alpha}u^{2}\,dx
+\int_{\R^{3}}F_{1}(u)\,dx
-\int_{\R^{3}}F_{2}(u)\,dx
-\frac1p\int_{\R^{3}}|u|^{p}\,dx.
\]
We set
\[
c_0
=
\inf_{u\in X\setminus\{0\}}\max_{t\ge0}J_0(tu),
\qquad
c_\infty
=
\inf_{u\in X\setminus\{0\}}\max_{t\ge0}J_\infty(tu).
\]

\begin{lemma}\label{lem3.6}
The following statements hold:
\begin{itemize}
	\item[(i)] There exists \(\eta_{0}>0\), independent of \(\varepsilon\), such that
	\[
	c_{\varepsilon}\ge \eta_{0},
	\qquad \forall\, \varepsilon>0.
	\]
	
	\item[(ii)]
	\[
	c_{\varepsilon}
	=
	\inf_{u\in\mathcal{N}_{\varepsilon}}J_{\varepsilon}(u)
	=
	\inf_{u\in X_{\varepsilon}\setminus\{0\}}\max_{t\ge0}J_{\varepsilon}(tu).
	\]
	
	\item[(iii)]
	\[
	\lim_{\varepsilon\to0}c_{\varepsilon}=c_{0}.
	\]
	
	\item[(iv)] If \(c_{0}<c_{\infty}\), then
	\[
	c_{\varepsilon}<c_{\infty}
	\qquad \text{for all sufficiently small } \varepsilon>0.
	\]
\end{itemize}
\end{lemma}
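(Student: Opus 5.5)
For (i) I would combine Lemma~\ref{lem3.2}(i) with the mountain-pass definition. Every \(\gamma\in\Gamma_\varepsilon\) starts at \(0\) and ends at a point with \(J_\varepsilon(\gamma(1))<0\). By Lemma~\ref{lem3.2}(i) such an endpoint cannot lie in the closed ball \(\{\|u\|_\varepsilon\le r\}\), since \(J_\varepsilon\ge0\) there. The reason is the lower bound \(\frac12\|u\|_{H_\varepsilon}^2+\xi_0(\|u\|_{F_1})-C\|u\|_\varepsilon^p\), which is nonnegative for \(\|u\|_\varepsilon\le r\) with \(r\) small. Hence \(\gamma\) crosses the sphere \(\|u\|_\varepsilon=r\), and \(\max J_\varepsilon\circ\gamma\ge\rho\). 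So \(\eta_0=\rho\) works, and it is independent of \(\varepsilon\) because \(r,\rho\) are.

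For (ii) I would use the fibering structure of Lemma~\ref{lem3.3}. Every \(u\in\mathcal N_\varepsilon\) satisfies \(t_u=1\), so \(J_\varepsilon(u)=\max_{t\ge0}J_\varepsilon(tu)\). Conversely, for any \(u\neq0\), \(\max_t J_\varepsilon(tu)=J_\varepsilon(t_uu)\) with \(t_uu\in\mathcal N_\varepsilon\). This gives \(\inf_{\mathcal N_\varepsilon}J_\varepsilon=\inf_{u\ne0}\max_tJ_\varepsilon(tu)\).

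The minimax value is \(\ge c_\varepsilon\): for \(u\ne0\), the ray \(t\mapsto tTu\) with \(T\) large (Lemma~\ref{lem3.2}(ii)) is admissible in \(\Gamma_\varepsilon\).

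For the reverse inequality I would show that every \(\gamma\in\Gamma_\varepsilon\) meets \(\mathcal N_\varepsilon\). Consider \(\mathcal H_\varepsilon(\gamma(t))\). Near \(t=0\), \(\gamma(t)\) is small in norm, and the estimate in Lemma~\ref{lem3.1} gives \(\mathcal H_\varepsilon(v)\ge\|v\|_{H_\varepsilon}^2-C\|v\|_{H_\varepsilon}^p>0\) for small \(v\neq0\). More precisely, \(\mathcal H_\varepsilon(v)>0\) on a small punctured ball, so on the last time \(\gamma\) leaves \(0\) the function is positive. At \(t=1\), \(\mathcal H_\varepsilon(\gamma(1))<0\), because \(J_\varepsilon(v)<0\) forces \(t_v<1\), and \(h'(t)<0\) for \(t>t_v\) by the strict monotonicity of \(\Psi\). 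By continuity there is \(t_0\) with \(\gamma(t_0)\in\mathcal N_\varepsilon\), so \(\max J_\varepsilon\circ\gamma\ge\inf_{\mathcal N_\varepsilon}J_\varepsilon\).

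For (iii) I would first get \(\liminf c_\varepsilon\ge c_0\). Since \(V(\varepsilon x)\ge V_0\), we have \(J_\varepsilon(tu)\ge J_0(tu)\) for \(u\in X_\varepsilon\subset X\), and (ii) for both functionals gives \(c_\varepsilon\ge c_0\).

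For \(\limsup c_\varepsilon\le c_0\), I would take, for \(\eta>0\), a \(w\in X\setminus\{0\}\) with \(\max_tJ_0(tw)<c_0+\eta\), and truncate it to compact support, \(w_R=\varphi_Rw\). Then \(w_R\to w\) in \(H^\alpha\) (Lemma~\ref{lem2.6}-type cutoff argument) and in \(L^{F_1}\) (by dominated convergence and \(\Delta_2\)). Fix \(x_0\in M\) (Lemma~\ref{lem2.2}) and set \(u_\varepsilon=w_R(\cdot-x_0/\varepsilon)\in X_\varepsilon\). Then \(J_\varepsilon(tu_\varepsilon)=J_0(tw_R)+\frac{t^2}2\int (V(\varepsilon x+x_0)-V_0)w_R^2\,dx\). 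The second term tends to \(0\) uniformly for \(t\) in compacts, since \(V\) is continuous and \(w_R\) has compact support.

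The maximizing \(t\) stay bounded, by an argument like Lemma~\ref{lem3.5}: the \(t^p\) term dominates. Hence \(\limsup c_\varepsilon\le\max_tJ_0(tw_R)\le c_0+2\eta\) for \(R\) large.

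Statement (iv) is then immediate from (iii).

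The main obstacle is the truncation step. One must check continuity of \(\max_tJ_0(tw)\) in \(w\) under convergence in \(X\), including the \(F_1\), \(F_2\) and Poisson terms (Lemma~\ref{lem2.5}(vi)), with the maximizers \(t\) uniformly bounded above and below.
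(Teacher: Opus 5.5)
Your proposal is correct. Parts (i), (ii) and (iv) follow the paper's argument. For (i) you use the sphere $\|u\|_\varepsilon=r$ from Lemma~\ref{lem3.2}(i). For (ii) you use the fibering map of Lemma~\ref{lem3.3}, the ray $t\mapsto tTu$ for one inequality, and a sign change of $\mathcal H_\varepsilon\circ\gamma$ for the other. Starting the intermediate value argument after the last zero of $\gamma$ is a small but real improvement. It guarantees that the zero of $\mathcal H_\varepsilon(\gamma(\cdot))$ you find is a nonzero point, hence lies in $\mathcal N_\varepsilon$. The paper's choice of $s_0$ leaves this unaddressed.

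The substantive difference is the upper bound in (iii). The paper tests $J_\varepsilon$ on the untranslated $w_R$ and asserts that $V(\varepsilon x)\to V_0$ uniformly on $\operatorname{supp}(w_R)$. In fact $V(\varepsilon x)\to V(0)$ there. As written, that argument only gives $\limsup_{\varepsilon\to0}c_\varepsilon\le c_{V(0)}$, which by Lemma~\ref{lem4.2} does not yield (iii) unless $0\in M$, an assumption the paper never states. Your translation $u_\varepsilon=w_R(\cdot-x_0/\varepsilon)$ with $x_0\in M$ is exactly the needed repair, because $V(\varepsilon z+x_0)\to V(x_0)=V_0$ locally uniformly.

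You can also drop the truncation, and with it the continuity issue you flag as the main obstacle. Since $V$ is continuous with a finite limit at infinity, it is bounded, so $X_\varepsilon=X$ as sets and you may take $u_\varepsilon=w(\cdot-x_0/\varepsilon)$ directly. Then $\int_{\R^3}(V(\varepsilon z+x_0)-V_0)w^2\,dz\to0$ by dominated convergence. The bound $J_\varepsilon(tu_\varepsilon)\le J_0(tw)+\frac{t^2}{2}(\sup V-V_0)\|w\|_2^2$, together with $p>4$, confines the maximizing $t$ to a fixed interval $[0,T]$ independent of $\varepsilon$. This gives $\limsup_{\varepsilon\to0}c_\varepsilon\le\max_{t\ge0}J_0(tw)<c_0+\eta$ directly.
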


\begin{proof}
(i) This follows directly from Lemma~\ref{lem3.2}(i). We set \(\eta_0=\rho\).

(ii) By Lemma~\ref{lem3.3}, for every \(u\in X_{\varepsilon}\setminus\{0\}\),
\[
t_uu\in\mathcal N_\varepsilon
\qquad\text{and}\qquad
J_\varepsilon(t_uu)=\max_{t\ge0}J_\varepsilon(tu),
\]
hence
\[
\inf_{u\in\mathcal N_\varepsilon}J_\varepsilon(u)
=
\inf_{u\in X_\varepsilon\setminus\{0\}}\max_{t\ge0}J_\varepsilon(tu).
\]

To prove that
\[
c_\varepsilon\le \inf_{u\ne0}\max_{t\ge0}J_\varepsilon(tu),
\]
it is enough to consider the path
\[
\gamma(t)=tTt_u u,
\qquad t\in[0,1],
\]
where \(T>1\) is chosen so that \(J_\varepsilon(Tt_uu)<0\).

Conversely, let \(\gamma\in\Gamma_\varepsilon\). Since \(\gamma(0)=0\), we may choose \(s_0\in(0,1)\) such that \(\gamma(s_0)\neq0\) and
\[
\|\gamma(s_0)\|_{H_\varepsilon}<\beta,
\]
where \(\beta\) is given by Lemma~\ref{lem3.1}. By Lemma~\ref{lem3.3}, \(t_{\gamma(s_0)}\gamma(s_0)\in\mathcal N_\varepsilon\), and Lemma~\ref{lem3.1} implies \(t_{\gamma(s_0)}>1\). Hence
\[
\langle J_\varepsilon'(\gamma(s_0)),\gamma(s_0)\rangle>0.
\]
On the other hand, \(J_\varepsilon(\gamma(1))<0\) implies
\[
\langle J_\varepsilon'(\gamma(1)),\gamma(1)\rangle<0.
\]
By continuity, there exists \(s_*\in(s_0,1)\) such that \(\gamma(s_*)\in\mathcal N_\varepsilon\). Therefore
\[
\max_{t\in[0,1]}J_\varepsilon(\gamma(t))
\ge
\inf_{u\in\mathcal N_\varepsilon}J_\varepsilon(u).
\]
Taking the infimum over \(\gamma\in\Gamma_\varepsilon\), we obtain the reverse inequality.

(iii) We first prove
\[
\limsup_{\varepsilon\to0}c_\varepsilon\le c_0.
\]
Fix \(\eta>0\). Choose \(w\in X\setminus\{0\}\) such that
\[
\max_{t\ge0}J_0(tw)\le c_0+\eta.
\]
Let \(\varphi\in C_0^\infty(\R^3)\) satisfy
\[
0\le\varphi\le1,
\qquad
\varphi=1 \text{ in }B_1(0),
\qquad
\varphi=0 \text{ in }B_2^c(0),
\]
and set
\[
\varphi_R(x)=\varphi\!\left(\frac{x}{R}\right),
\qquad
w_R(x)=\varphi_R(x)w(x).
\]
By Lemma~\ref{lem2.6}, the dominated convergence theorem, and density in \(L^{F_1}(\R^3)\), we have
\[
w_R\to w \qquad \text{in }X
\]
as \(R\to+\infty\). Fix \(R\) sufficiently large. Since \(w_R\) has compact support and \(p>4\), there exists \(T>0\), independent of \(\varepsilon\) small, such that
\[
\max_{t\ge0}J_\varepsilon(tw_R)=\max_{t\in[0,T]}J_\varepsilon(tw_R).
\]
Moreover, \(V(\varepsilon x)\to V_0\) uniformly on \(\operatorname{supp}(w_R)\), so
\[
J_\varepsilon(tw_R)\to J_0(tw_R)
\qquad \text{uniformly for } t\in[0,T].
\]
Hence, for \(\varepsilon\) small,
\[
c_\varepsilon
\le
\max_{t\ge0}J_\varepsilon(tw_R)
\le
\max_{t\ge0}J_0(tw_R)+\eta
\le
\max_{t\ge0}J_0(tw)+2\eta
\le
c_0+3\eta.
\]
Thus \(\limsup_{\varepsilon\to0}c_\varepsilon\le c_0\).

On the other hand, since \(V(\varepsilon x)\ge V_0\),
\[
J_\varepsilon(u)\ge J_0(u),
\qquad \forall\, u\in X_\varepsilon,
\]
and therefore, by part (ii),
\[
c_\varepsilon
=
\inf_{u\in X_\varepsilon\setminus\{0\}}\max_{t\ge0}J_\varepsilon(tu)
\ge
\inf_{u\in X\setminus\{0\}}\max_{t\ge0}J_0(tu)
=
c_0.
\]
Hence \(\liminf_{\varepsilon\to0}c_\varepsilon\ge c_0\), and the conclusion follows.

(iv) This is immediate from part (iii).
\end{proof}

\subsection{Compactness at the mountain-pass level}

Lemma~\ref{lem3.2} yields a \((PS)_{c_{\varepsilon}}\)-sequence for \(J_{\varepsilon}\). We now show that, under
\[
c_{\varepsilon}<c_{\infty},
\]
such a sequence is compact in \(L^r(\R^3)\) for every \(r\in[2,2_\alpha^*)\).

\begin{lemma}\label{lem3.7}
Let \(\varepsilon>0\) be such that
\[
c_{\varepsilon}<c_{\infty}.
\]
Let \(\{u_n\}\subset X_{\varepsilon}\) be a \((PS)_{c_{\varepsilon}}\)-sequence for \(J_{\varepsilon}\). Then, up to a subsequence, there exists \(u_{\varepsilon}\in X_{\varepsilon}\setminus\{0\}\) such that
\[
u_n\to u_{\varepsilon}
\qquad \text{in }L^{r}(\R^{3}),
\qquad \forall\, r\in[2,2_{\alpha}^{*}).
\]
\end{lemma}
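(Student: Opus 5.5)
Our plan is the standard concentration-compactness argument, adapted to the Orlicz setting and to the two limiting profiles (the problem at infinity and the one at finite points).

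\textbf{Step 1: boundedness.} Since \(\{u_n\}\) is a \((PS)_{c_\varepsilon}\)-sequence, we would compute
\[
J_\varepsilon(u_n)-\tfrac14\langle J_\varepsilon'(u_n),u_n\rangle
=\tfrac14\|u_n\|_{H_\varepsilon}^2+\int_{\R^3}\Bigl(F_1(u_n)-\tfrac14 G_1(u_n)\Bigr)dx
+\int_{\R^3}\Bigl(\tfrac14 G_2(u_n)-F_2(u_n)\Bigr)dx+\Bigl(\tfrac14-\tfrac1p\Bigr)\|u_n\|_p^p .
\]
By Lemma~\ref{lem2.3}(i), \(F_1-\frac14G_1\ge\frac12F_1\). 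The \(F_2\) bracket equals \(\frac14 s^2+\frac12 F_1-\dots\). More cleanly, using \(G_2-G_1=s^2\log s^2+s^2\) and \(F_2-F_1=\frac12 s^2\log s^2\), the combined logarithmic part is \(\frac14\int u_n^2\) up to an \(F_1\)-controlled term. This bounds \(\|u_n\|_{H_\varepsilon}^2+\int F_1(u_n)\) by \(C+o(1)\|u_n\|_\varepsilon\). By \eqref{eq2.2}, a bound on \(\int F_1(u_n)\) bounds \(\|u_n\|_{F_1}\), so \(\{u_n\}\) is bounded in \(X_\varepsilon\).

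\textbf{Step 2: extraction and nontriviality.} We pass to \(u_n\rightharpoonup u_\varepsilon\) in \(X_\varepsilon\), with a.e. convergence. Weak-to-weak continuity of \(J_\varepsilon'\), obtained from the local compact embedding, Lemma~\ref{lem2.5}(v) and the convexity of \(F_1\), gives \(J_\varepsilon'(u_\varepsilon)=0\). Hence \(J_\varepsilon(u_\varepsilon)\ge0\), and \(u_\varepsilon\in\mathcal N_\varepsilon\) whenever \(u_\varepsilon\neq0\).

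\textbf{Step 3: excluding loss of mass (the main obstacle).} Set \(z_n=u_n-u_\varepsilon\). By Lemma~\ref{lem2.4}, and since \(V(\varepsilon x)u_n^2\) splits by weak convergence in \(H_\varepsilon\), we get
\[
J_\varepsilon(z_n)=c_\varepsilon-J_\varepsilon(u_\varepsilon)+o(1),\qquad \langle J_\varepsilon'(z_n),z_n\rangle=o(1).
\]
Suppose \(z_n\not\to0\) in \(L^r\) for some \(r\in(2,2_\alpha^*)\). Lions' lemma then yields \(y_n\) with \(|y_n|\to\infty\), since \(z_n\rightharpoonup0\) locally, and \(\int_{B_1(y_n)}z_n^2\ge\gamma>0\). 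Because \(V(\varepsilon x)\ge V_\infty-o(1)\) for \(|x|\) large, we would show
\[
J_\varepsilon(z_n)\ge J_\infty(z_n)-o(1),\qquad \langle J_\infty'(z_n),z_n\rangle\le o(1).
\]
The second estimate uses that \(\int(V(\varepsilon x)-V_\infty)^- z_n^2\to0\), which holds because \(z_n\to0\) in \(L^2_{\mathrm{loc}}\). Let \(t_n\) be the projection of \(z_n\) onto \(\mathcal N_\infty\). Since \(z_n(\cdot+y_n)\) has a nonzero weak limit, Lemma~\ref{lem3.1}-type bounds give that \(t_n\) stays bounded. The monotonicity argument of Lemma~\ref{lem3.3} then gives \(\limsup t_n\le1\). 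By \((f_1)\)--\((f_3)\), \(t\mapsto J_\infty(tz)-\frac{t^2}{2}\langle J'_\infty(z),z\rangle\)-type comparisons yield
\[
c_\infty\le J_\infty(t_nz_n)\le J_\varepsilon(z_n)+o(1)\le c_\varepsilon+o(1)<c_\infty,
\]
a contradiction. The delicate point here is the comparison \(J_\infty(t_nz_n)\le J_\infty(z_n)+o(1)\) when \(t_n\le1+o(1)\) in the logarithmic setting. For this we would use the identity \(J_\infty(z)-\frac12\langle J_\infty'(z),z\rangle=\frac12\|z\|_2^2+\frac14\|\phi\dots\|+(\frac12-\frac1p)\|z\|_p^p-\frac14\int\phi_z z^2\), whose \(t\)-dependence is monotone increasing.

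\textbf{Step 4: conclusion.} Hence \(z_n\to0\) in \(L^r\) for \(r\in(2,2_\alpha^*)\). If \(u_\varepsilon=0\), the same argument applied to \(u_n\) itself contradicts \(c_\varepsilon>0\) together with Lemma~\ref{lem3.1}, so \(u_\varepsilon\neq0\). For \(r=2\), we would use \(\langle J_\varepsilon'(z_n),z_n\rangle=o(1)\) together with \(\int F_2'(z_n)z_n\to0\) and \(\|z_n\|_p\to0\), which gives \(\|z_n\|_{H_\varepsilon}^2+\int G_1(z_n)\to0\), and hence \(z_n\to0\) in \(L^2\). The remaining exponents follow by interpolation.
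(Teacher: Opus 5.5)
\paragraph{Comparison with the paper.} Your route is different from the paper's. The paper runs an iterated profile decomposition of \(z_n=u_n-u_\varepsilon\). Each nontrivial weak limit of \(z_n(\cdot+y_n^j)\) is shown to be a critical point of \(J_\infty\), by passing to the limit in the equation with \(V(\varepsilon(\cdot+y_n^j))\to V_\infty\) locally uniformly. The energy then splits as \(c_\varepsilon=J_\varepsilon(u_\varepsilon)+\sum_j J_\infty(w^j)\), with each \(J_\infty(w^j)\ge c_\infty\).

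You instead project the whole remainder onto \(\mathcal N_\infty\) once and compare energies. This is leaner: you never identify profiles as critical points, iterate, or argue finite termination. The only inputs are:
\begin{itemize}
\item[(a)] \(\int_{\R^3}|V(\varepsilon x)-V_\infty|z_n^2\,dx\to0\), which holds for fixed \(\varepsilon\) since \(z_n\to0\) in \(L^2_{\mathrm{loc}}\) and \(z_n\) is bounded in \(L^2\);
\item[(b)] \(\int_{\R^3}|z_n|^p\,dx\ge c>0\), which comes from non-vanishing.
\end{itemize}
The paper's version gives a full decomposition, which it reuses in Lemma~\ref{lem5.2}, but that extra information is not needed for this lemma.

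\paragraph{The key comparison.} The justification you offer for \(J_\infty(t_nz_n)\le J_\infty(z_n)+o(1)\) does not work. Along rays,
\[
J_\infty(tz)-\tfrac12\langle J_\infty'(tz),tz\rangle=\tfrac{t^2}{2}\|z\|_2^2+\bigl(\tfrac12-\tfrac1p\bigr)t^p\|z\|_p^p-\tfrac{t^4}{4}\int_{\R^3}\phi_z^\alpha z^2\,dx .
\]
The quartic Poisson term enters with a minus sign, so this expression is not monotone in general.

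The inequality itself is true, and follows from the fibering structure of Lemma~\ref{lem3.4}. Set \(h_n(t)=J_\infty(tz_n)\) and \(\Psi_n(t)=h_n'(t)/t\). Then \(\Psi_n\) is decreasing, \(\Psi_n(t_n)=0\), and \(\Psi_n(1)=\langle J_\infty'(z_n),z_n\rangle\). Hence \(|\Psi_n|\le|\Psi_n(1)|\) between \(1\) and \(t_n\), and
\[
|J_\infty(t_nz_n)-J_\infty(z_n)|\le |t_n-1|\max\{1,t_n\}\,\bigl|\langle J_\infty'(z_n),z_n\rangle\bigr|\to0 .
\]
This needs two things:
\begin{itemize}
\item[(1)] The two-sided estimate \(\langle J_\infty'(z_n),z_n\rangle=o(1)\). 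It holds because the full integral \(\int|V(\varepsilon x)-V_\infty|z_n^2\) tends to \(0\), not only its negative part.
\item[(2)] An upper bound on \(t_n\). This comes from \(\int|z_n|^p\ge c\) and the Nehari identity, as in Lemma~\ref{lem3.5}, not from Lemma~\ref{lem3.1}.
\end{itemize}
With this, your step \(\limsup t_n\le 1\) is superfluous.

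\paragraph{Boundedness in Step 1.} Step 1 does not prove boundedness as written. The combined logarithmic part is \(\frac14\int u_n^2-\frac14\int u_n^2\log u_n^2\). Its negative piece \(-\frac14\int_{\{|u_n|>1\}}u_n^2\log u_n^2\,dx\) is not \(F_1\)-controlled. It also cannot be absorbed pointwise by \((\frac14-\frac1p)|s|^p\) plus the available quadratic terms when \(p\) is close to \(4\): the required inequality \(\log s^2\le C+(1-\frac4p)s^{p-2}\) fails as \(p\to4^+\). The paper itself takes boundedness of the \((PS)\) sequence for granted, so this is a shared omission rather than a defect of your strategy. Still, the argument you sketch for it is not valid.
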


\begin{proof}
Since \(\{u_n\}\) is bounded in \(X_\varepsilon\), up to a subsequence,
\[
u_n\rightharpoonup u_\varepsilon \text{ in }X_\varepsilon,
\qquad
u_n\rightharpoonup u_\varepsilon \text{ in }H^\alpha(\R^3),
\]
\[
u_n\to u_\varepsilon \text{ in }L^r_{\mathrm{loc}}(\R^3),\quad \forall\, r\in[1,2_\alpha^*),
\qquad
u_n(x)\to u_\varepsilon(x) \text{ a.e. in }\R^3.
\]
Passing to the limit in \(J_\varepsilon'(u_n)\to0\), we obtain
\[
J_\varepsilon'(u_\varepsilon)=0
\qquad \text{in }X_\varepsilon^*.
\]
Set \(z_n:=u_n-u_\varepsilon\). By Lemma~\ref{lem2.4},
\[
J_\varepsilon(u_n)=J_\varepsilon(u_\varepsilon)+J_\varepsilon(z_n)+o(1)
\]
and
\[
\langle J_\varepsilon'(u_n),u_n\rangle
=
\langle J_\varepsilon'(u_\varepsilon),u_\varepsilon\rangle
+
\langle J_\varepsilon'(z_n),z_n\rangle
+o(1),
\]
whence
\[
\langle J_\varepsilon'(z_n),z_n\rangle=o(1).
\]

We first exclude vanishing. If
\[
\sup_{y\in\R^3}\int_{B_R(y)}|u_n|^2\,dx\to0
\qquad \forall\, R>0,
\]
then Lions' lemma gives
\[
u_n\to0
\qquad \text{in }L^r(\R^3),
\qquad \forall\, r\in(2,2_\alpha^*).
\]
Using \(\langle J_\varepsilon'(u_n),u_n\rangle=o_n(1)\), together with \((f_1)\), Lemma~\ref{lem2.5}(i), and \((f_2)\), we infer that
\[
\|u_n\|_{H_\varepsilon}\to0,
\qquad
\int_{\R^3}G_1(u_n)\,dx\to0.
\]
Since \(lF_1\le G_1\) by Lemma~\ref{lem2.3}(i), it follows that
\[
\int_{\R^3}F_1(u_n)\,dx\to0.
\]
Hence \(u_n\to0\) in \(X_\varepsilon\), and therefore \(J_\varepsilon(u_n)\to0\), contradicting
\[
J_\varepsilon(u_n)\to c_\varepsilon\ge \eta_0>0.
\]
Thus \(\{u_n\}\) does not vanish.

We claim that \(z_n\to0\) in \(L^r(\R^3)\) for every \(r\in[2,2_\alpha^*)\). Assume by contradiction that this is false. Then \(\{z_n\}\) does not vanish. By concentration--compactness, there exist \(R,\delta>0\) and a sequence \(\{y_n^1\}\subset\R^3\) such that
\[
\int_{B_R(y_n^1)}|z_n|^2\,dx\ge\delta
\qquad \forall\, n.
\]
Since \(z_n\to0\) in \(L^2_{\mathrm{loc}}(\R^3)\), necessarily \(|y_n^1|\to+\infty\). Define
\[
w_n^1(x):=z_n(x+y_n^1).
\]
Then \(\{w_n^1\}\) is bounded in \(H^\alpha(\R^3)\). Up to a subsequence,
\[
w_n^1\rightharpoonup w^1
\qquad \text{in }H^\alpha(\R^3),
\]
for some \(w^1\neq0\).

For every \(\varphi\in C_c^\infty(\R^3)\), using \(J_\varepsilon'(u_n)\to0\), \(J_\varepsilon'(u_\varepsilon)=0\), the splittings of Lemma~\ref{lem2.4}, the fact that \(u_\varepsilon(\cdot+y_n^1)\to0\) in \(L^r_{\mathrm{loc}}(\R^3)\) for all \(r<2_\alpha^*\), and the local uniform convergence
\[
V\bigl(\varepsilon(\cdot+y_n^1)\bigr)\to V_\infty
\qquad \text{in }L^\infty_{\mathrm{loc}}(\R^3),
\]
we obtain
\[
\langle J_\infty'(w^1),\varphi\rangle=0.
\]
Hence \(w^1\neq0\) is a critical point of \(J_\infty\), so \(w^1\in\mathcal N_\infty\) and
\[
J_\infty(w^1)\ge c_\infty.
\]
Set
\[
z_n^1(x):=z_n(x)-w^1(x-y_n^1).
\]
Again by Lemma~\ref{lem2.4}, by Lemma~\ref{lem2.5}(v), and by dominated convergence for the potential term,
\[
J_\varepsilon(z_n)=J_\infty(w^1)+J_\varepsilon(z_n^1)+o(1)
\]
and
\[
\langle J_\varepsilon'(z_n),z_n\rangle
=
\langle J_\infty'(w^1),w^1\rangle
+
\langle J_\varepsilon'(z_n^1),z_n^1\rangle
+o(1).
\]
If \(z_n^1\) vanishes, then \(z_n^1\to0\) in \(L^r(\R^3)\) for all \(r\in(2,2_\alpha^*)\), and the last identity yields \(\|z_n^1\|_{H_\varepsilon}\to0\); hence \(J_\varepsilon(z_n^1)\to0\). If \(z_n^1\) does not vanish, we repeat the extraction procedure. Since every nontrivial extracted profile belongs to \(\mathcal N_\infty\), Lemma~\ref{lem3.1} applied to the autonomous functional shows that each profile has \(H_\infty\)-norm bounded away from zero. Because \(\{u_n\}\) is bounded, only finitely many profiles can occur.

Therefore, after finitely many steps, we obtain nontrivial critical points \(w^1,\dots,w^k\in X\), pairwise diverging translation sequences \(\{y_n^j\}\), and a remainder \(r_n^k\) vanishing in \(L^r(\R^3)\) for every \(r\in(2,2_\alpha^*)\), such that
\[
J_\varepsilon(z_n)=\sum_{j=1}^k J_\infty(w^j)+o(1).
\]
Consequently,
\[
c_\varepsilon
=
\lim_{n\to\infty}J_\varepsilon(u_n)
=
J_\varepsilon(u_\varepsilon)+\sum_{j=1}^k J_\infty(w^j)
\ge
J_\varepsilon(u_\varepsilon)+c_\infty.
\]
If \(u_\varepsilon=0\), then \(c_\varepsilon\ge c_\infty\), contradicting the assumption. If \(u_\varepsilon\neq0\), then \(u_\varepsilon\in\mathcal N_\varepsilon\), and Lemma~\ref{lem3.6}(ii) gives \(J_\varepsilon(u_\varepsilon)\ge c_\varepsilon\), again a contradiction.

Hence no profile at infinity can occur, and \(z_n\to0\) in \(L^r(\R^3)\) for every \(r\in[2,2_\alpha^*)\). This proves
\[
u_n\to u_\varepsilon
\qquad \text{in }L^r(\R^3),
\qquad \forall\, r\in[2,2_\alpha^*).
\]
Finally, \(u_\varepsilon\neq0\); otherwise \(u_n\to0\) in \(L^r(\R^3)\) for all \(r\in(2,2_\alpha^*)\), and the vanishing argument above would imply \(u_n\to0\) in \(X_\varepsilon\), a contradiction.
\end{proof}

\begin{lemma}\label{lem3.8}
Assume that
\[
c_{\varepsilon}<c_{\infty}.
\]
Then \(J_{\varepsilon}\) satisfies the \((PS)_{c_{\varepsilon}}\) condition.
\end{lemma}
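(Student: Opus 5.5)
Given a \((PS)_{c_\varepsilon}\)-sequence \(\{u_n\}\subset X_\varepsilon\), the plan is to upgrade the \(L^r\)-convergence of Lemma~\ref{lem3.7} to strong convergence in \(X_\varepsilon=H_\varepsilon\cap L^{F_1}(\R^3)\). First, boundedness of \(\{u_n\}\) in \(X_\varepsilon\) (already used in Lemma~\ref{lem3.7}) is obtained from
\[
J_\varepsilon(u_n)-\tfrac12\langle J_\varepsilon'(u_n),u_n\rangle
=-\tfrac14\!\int\phi_{u_n}^\alpha u_n^2+\tfrac12\!\int u_n^2+\bigl(\tfrac12-\tfrac1p\bigr)\!\int|u_n|^p,
\]
or better with the factor \(\frac14\). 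This gives
\[
J_\varepsilon(u_n)-\tfrac14\langle J_\varepsilon'(u_n),u_n\rangle
=\tfrac14\|u_n\|_{H_\varepsilon}^2+\tfrac14\int u_n^2+\bigl(\tfrac14-\tfrac1p\bigr)\int|u_n|^p \ldots
\]
Here the logarithmic parts combine exactly to \(\frac14\int u_n^2\), since \(F-\frac12F'(s)s=-\frac12 s^2\). So \(\|u_n\|_{H_\varepsilon}\) is controlled by \(c_\varepsilon+1+o(1)\|u_n\|_\varepsilon\). The \(L^{F_1}\) part is then controlled via \(\xi_0(\|u_n\|_{F_1})\le\int F_1(u_n)\le\int F_2(u_n)+\ldots\) and \((f_2)\), using the \(H^\alpha\) bound. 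Lemma~\ref{lem3.7} then provides \(u_\varepsilon\ne0\) with \(u_n\rightharpoonup u_\varepsilon\) in \(X_\varepsilon\), \(u_n\to u_\varepsilon\) in \(L^r\) for \(r\in[2,2_\alpha^*)\), and \(J_\varepsilon'(u_\varepsilon)=0\).

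Next, I would test \(J_\varepsilon'(u_n)\) with \(u_n\) and \(J_\varepsilon'(u_\varepsilon)\) with \(u_\varepsilon\). Since \(\langle J_\varepsilon'(u_n),u_n\rangle\to0=\langle J_\varepsilon'(u_\varepsilon),u_\varepsilon\rangle\), this gives
\[
\|u_n\|_{H_\varepsilon}^2+\int\phi_{u_n}^\alpha u_n^2+\int G_1(u_n)=\int G_2(u_n)+\int|u_n|^p+o(1).
\]
By Lemma~\ref{lem2.3}(ii) and the strong \(L^2\cap L^q\) convergence, the right-hand side converges to the corresponding quantity for \(u_\varepsilon\). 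The Poisson term also converges: by Lemma~\ref{lem2.5}(iii) and the HLS inequality, \(\int\phi^\alpha_{u}u^2\) is continuous in \(L^{12/(3+2\alpha)}\), and \(\frac{12}{3+2\alpha}\in[2,2_\alpha^*)\). Hence
\[
\lim_n\Bigl(\|u_n\|_{H_\varepsilon}^2+\int G_1(u_n)\Bigr)=\|u_\varepsilon\|_{H_\varepsilon}^2+\int G_1(u_\varepsilon).
\]
Weak lower semicontinuity of the norm, together with Fatou's lemma for \(G_1\ge0\), yields separately \(\|u_n\|_{H_\varepsilon}\to\|u_\varepsilon\|_{H_\varepsilon}\) and \(\int G_1(u_n)\to\int G_1(u_\varepsilon)\). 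The first, combined with weak convergence in the Hilbert space \(H_\varepsilon\), gives \(u_n\to u_\varepsilon\) in \(H_\varepsilon\).

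The main obstacle is strong convergence in \(L^{F_1}\). The plan is to use the Br\'ezis--Lieb splitting of Lemma~\ref{lem2.4} for \(G_1\): \(\int G_1(u_n)=\int G_1(z_n)+\int G_1(u_\varepsilon)+o(1)\) with \(z_n=u_n-u_\varepsilon\). The previous step then forces \(\int G_1(z_n)\to0\). Since \(lF_1\le G_1\) by Lemma~\ref{lem2.3}(i), we get \(\int F_1(z_n)\to0\), and because \(F_1\in(\Delta_2)\) (Proposition~\ref{prop2.1}) this is equivalent to \(\|z_n\|_{F_1}\to0\). If the Orlicz Br\'ezis--Lieb step is delicate, an alternative is to note that \(F_1\) is convex with \(F_1(s)\sim\frac{A_0}{2}s^2\) for large \(|s|\); the \(L^2\) convergence then handles large values, while Vitali's theorem plus \(\int G_1(u_n)\to\int G_1(u_\varepsilon)\) handles the small values. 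Either way \(u_n\to u_\varepsilon\) in \(X_\varepsilon\), which proves the \((PS)_{c_\varepsilon}\) condition.
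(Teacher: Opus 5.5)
Your main argument is correct, but it follows a different route from the paper's. The paper tests $J_\varepsilon'(u_n)-J_\varepsilon'(u_\varepsilon)$ against $u_n-u_\varepsilon$, drops the $F_1$-contribution by monotonicity of $F_1'$, and obtains $\|u_n-u_\varepsilon\|_{H_\varepsilon}\to0$ directly. For the Orlicz part it then appeals to Vitali's theorem, using only boundedness of $u_n-u_\varepsilon$ in $L^{F_1}(\R^3)$, the $\Delta_2$-condition and a.e.\ convergence.

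You instead subtract the Nehari identities of $u_n$ and $u_\varepsilon$. Weak lower semicontinuity plus Fatou ($G_1\ge0$) then split the limit into $\|u_n\|_{H_\varepsilon}\to\|u_\varepsilon\|_{H_\varepsilon}$ and $\int G_1(u_n)\to\int G_1(u_\varepsilon)$. The second piece, combined with the Br\'ezis--Lieb splitting of Lemma~\ref{lem2.4}, $lF_1\le G_1$ and \eqref{eq2.2}, yields $\|u_n-u_\varepsilon\|_{F_1}\to0$.

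For the $H_\varepsilon$-convergence the two routes are equally good. For the $L^{F_1}$-convergence yours is more solid, because it extracts from the equation exactly the information the paper's Vitali step lacks. Boundedness in $L^{F_1}$ and a.e.\ convergence do not force $\int F_1(z_n)\,dx\to0$, even together with convergence in $H^\alpha(\R^3)$ and in every $L^r(\R^3)$. The reason is that $F_1(s)\gg s^2$ as $s\to0$. For example, take $z_n=e^{-n}\psi(\cdot/R_n)$ with $\psi\in C_c^\infty$, $0\le\psi\le1$, $R_n^3=e^{2n}/n$: all those hypotheses hold, yet $\int F_1(z_n)\,dx\to\|\psi\|_2^2$. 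The price of your route is that it needs the Orlicz Br\'ezis--Lieb lemma for the non-convex $G_1$. This is fine here: $|G_1'(s)|\le 2F_1'(|s|)$ and $F_1\in(\Delta_2)$ give $|G_1(a+b)-G_1(a)|\le \eta F_1(a)+C_\eta F_1(b)$.

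One correction: your boundedness paragraph does not work as written. With the factor $\frac14$ the logarithmic terms do not collapse to $\frac14\int u_n^2$; the correct identity is
\[
J_\varepsilon(u_n)-\tfrac14\langle J_\varepsilon'(u_n),u_n\rangle
=\tfrac14\|u_n\|_{H_\varepsilon}^2+\tfrac14\int_{\R^3}u_n^2\,dx-\tfrac14\int_{\R^3}u_n^2\log u_n^2\,dx+\Bigl(\tfrac14-\tfrac1p\Bigr)\int_{\R^3}|u_n|^p\,dx .
\]
The term $-\frac14\int u_n^2\log u_n^2$ is negative on $\{|u_n|>1\}$, so this identity alone does not bound $\|u_n\|_{H_\varepsilon}$. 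The identity $F-\frac12F'(s)s=-\frac12s^2$ you quote belongs to the factor $\frac12$. There, however, the Poisson term enters with the wrong sign, as $-\frac14\int\phi_{u_n}^\alpha u_n^2$.

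This does not damage the proof of the lemma itself. Like the paper, you can take boundedness of the $(PS)_{c_\varepsilon}$-sequence from Lemma~\ref{lem3.7}, where it is taken for granted. Just do not present that computation as a proof of it.
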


\begin{proof}
Let \(\{u_n\}\subset X_{\varepsilon}\) be a \((PS)_{c_{\varepsilon}}\)-sequence. By Lemma~\ref{lem3.7}, up to a subsequence,
\[
u_n\rightharpoonup u_{\varepsilon}\quad \text{in }X_{\varepsilon},
\qquad
u_n\to u_{\varepsilon}\quad \text{in }L^{r}(\R^{3}),
\qquad \forall\, r\in[2,2_{\alpha}^{*}),
\]
for some \(u_{\varepsilon}\in X_{\varepsilon}\setminus\{0\}\), and
\[
J_{\varepsilon}'(u_{\varepsilon})=0
\qquad \text{in }X_{\varepsilon}^{*}.
\]

From
\[
\langle J_{\varepsilon}'(u_n)-J_{\varepsilon}'(u_{\varepsilon}),u_n-u_{\varepsilon}\rangle=o_n(1)
\]
we obtain
\[
\begin{aligned}
&\|u_n-u_{\varepsilon}\|_{H_{\varepsilon}}^{2}
+\int_{\R^{3}}
\bigl(F_{1}'(u_n)-F_{1}'(u_{\varepsilon})\bigr)(u_n-u_{\varepsilon})\,dx \\
&=
\int_{\R^{3}}
\bigl(F_{2}'(u_n)-F_{2}'(u_{\varepsilon})\bigr)(u_n-u_{\varepsilon})\,dx \\
&\quad
+\int_{\R^{3}}
\Bigl(|u_n|^{p-2}u_n-|u_{\varepsilon}|^{p-2}u_{\varepsilon}\Bigr)(u_n-u_{\varepsilon})\,dx \\
&\quad
-\int_{\R^{3}}
\bigl(\phi_{u_n}^{\alpha}u_n-\phi_{u_{\varepsilon}}^{\alpha}u_{\varepsilon}\bigr)(u_n-u_{\varepsilon})\,dx
+o_n(1).
\end{aligned}
\]
Since \(F_1\) is convex,
\[
\int_{\R^{3}}
\bigl(F_{1}'(u_n)-F_{1}'(u_{\varepsilon})\bigr)(u_n-u_{\varepsilon})\,dx\ge0.
\]
The \(F_2\)-term and the \(p\)-term tend to \(0\) by the strong convergence in \(L^p(\R^3)\). By Lemma~\ref{lem2.5}(ii) and the strong convergence in \(L^{\frac{12}{3+2\alpha}}(\R^3)\), the nonlocal term also tends to \(0\). Hence
\[
\|u_n-u_{\varepsilon}\|_{H_{\varepsilon}}\to0.
\]

It remains to prove the convergence in \(L^{F_1}(\R^3)\). Since \(\{u_n-u_\varepsilon\}\) is bounded in \(L^{F_1}(\R^3)\), \(F_1\in(\Delta_2)\), and \(u_n-u_\varepsilon\to0\) a.e. in \(\R^3\), Vitali's theorem yields
\[
\int_{\R^3}F_1(u_n-u_\varepsilon)\,dx\to0.
\]
By the modular characterization of the Luxemburg norm,
\[
u_n\to u_\varepsilon
\qquad \text{in }L^{F_1}(\R^3).
\]
Therefore
\[
u_n\to u_\varepsilon
\qquad \text{in }X_\varepsilon,
\]
and the proof is complete.
\end{proof}

\subsection{Existence of a ground state solution}

\begin{theorem}\label{thm3.9}
Assume that
\[
c_{0}<c_{\infty}.
\]
Then there exists \(\varepsilon_{0}>0\) such that, for every \(\varepsilon\in(0,\varepsilon_{0})\), the functional \(J_{\varepsilon}\) possesses a nontrivial critical point \(u_{\varepsilon}\in X_{\varepsilon}\). Moreover,
\[
J_{\varepsilon}(u_{\varepsilon})=c_{\varepsilon}
=
\inf_{u\in \mathcal{N}_{\varepsilon}}J_{\varepsilon}(u),
\]
so that \(u_{\varepsilon}\) is a ground state solution of the rescaled problem.
\end{theorem}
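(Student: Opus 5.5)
By Lemma~\ref{lem3.6}(iv), the assumption \(c_0<c_\infty\) gives \(\varepsilon_0>0\) with \(c_\varepsilon<c_\infty\) for all \(\varepsilon\in(0,\varepsilon_0)\). Fix such an \(\varepsilon\). The plan is to produce a \((PS)_{c_\varepsilon}\)-sequence, use the compactness of Lemma~\ref{lem3.8} to pass to a limit, and then identify the limit with the Nehari infimum via Lemma~\ref{lem3.6}(ii).

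First, obtain the sequence. By Lemma~\ref{lem3.2}, \(J_\varepsilon\) has mountain-pass geometry on \(X_\varepsilon\) with level \(c_\varepsilon\ge\eta_0>0\). The mountain-pass theorem without (PS), in Ekeland's form, then yields \(\{u_n\}\subset X_\varepsilon\) with
\[
J_\varepsilon(u_n)\to c_\varepsilon,
\qquad
J_\varepsilon'(u_n)\to0 \text{ in }X_\varepsilon^*.
\]
Lemma~\ref{lem3.7} uses boundedness of this sequence, so it must be checked. I would prove it from
\[
J_\varepsilon(u_n)-\tfrac14\langle J_\varepsilon'(u_n),u_n\rangle.
\]
This combination eliminates the Poisson term. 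It leaves
\[
\tfrac14\|u_n\|_{H_\varepsilon}^2+\bigl(\tfrac1{4}-\tfrac1p\bigr)\|u_n\|_p^p+\int\bigl(F_1-\tfrac14G_1\bigr)(u_n)-\int\bigl(F_2-\tfrac14G_2\bigr)(u_n).
\]
By Lemma~\ref{lem2.3}(i), \(F_1-\tfrac14G_1\ge\tfrac12F_1\ge0\). The \(F_2\)-part is superquadratic of logarithmic type. Its positive part is dominated by \(\eta\|u_n\|_p^p+C_\eta\|u_n\|_2^2\), and a sharper pointwise estimate \(|F_2-\tfrac14G_2|(s)\le \eta|s|^p+C_\eta s^2\) still leaves an \(L^2\) term to absorb. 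A cleaner route is to use \(\tfrac12\) in place of \(\tfrac14\) for the logarithmic part. Indeed
\[
J_\varepsilon(u)-\tfrac12\langle J'_\varepsilon(u),u\rangle=-\tfrac14\!\int\phi_u u^2+\tfrac12\|u\|_2^2+\bigl(\tfrac12-\tfrac1p\bigr)\|u\|_p^p,
\]
which has a bad Poisson sign. So I would combine the two identities. The \(\tfrac14\)-identity controls \(\|u\|_{H_\varepsilon}^2\) and \(\|u\|_p^p\) up to a term \(\int s^2\log^+ s^2\), and that term is absorbed by \(\eta\|u\|_p^p+C_\eta\|u\|_2^2\) with \(C_\eta\|u\|_2^2\le \tfrac18\|u\|_{H_\varepsilon}^2\) once \(\eta\) is chosen suitably. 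I expect this to be the main obstacle. If the absorption fails, I would instead bound \(\|u_n\|_2^2\) through the Gagliardo–Nirenberg interpolation between \(L^2\) and \(L^p\).

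Once \(\|u_n\|_{H_\varepsilon}\) and \(\|u_n\|_p\) are bounded, the \(F_1\)-integral is bounded as well, and \eqref{eq2.2} gives boundedness of \(\|u_n\|_{F_1}\). Lemma~\ref{lem3.8} then applies: up to a subsequence, \(u_n\to u_\varepsilon\) in \(X_\varepsilon\). By continuity of \(J_\varepsilon\) and \(J_\varepsilon'\), we get \(J_\varepsilon(u_\varepsilon)=c_\varepsilon\) and \(J_\varepsilon'(u_\varepsilon)=0\). Since \(c_\varepsilon\ge\eta_0>0=J_\varepsilon(0)\), we have \(u_\varepsilon\ne0\), so \(u_\varepsilon\in\mathcal N_\varepsilon\). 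Finally, Lemma~\ref{lem3.6}(ii) gives \(J_\varepsilon(u_\varepsilon)=c_\varepsilon=\inf_{\mathcal N_\varepsilon}J_\varepsilon\), so \(u_\varepsilon\) is a ground state.
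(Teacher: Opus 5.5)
Your skeleton is the paper's proof: Lemma~\ref{lem3.6}(iv) gives $\varepsilon_0$, the mountain-pass theorem gives a $(PS)_{c_\varepsilon}$-sequence, Lemma~\ref{lem3.8} gives strong convergence, and Lemma~\ref{lem3.6}(ii) identifies $c_\varepsilon$ with $\inf_{\mathcal N_\varepsilon}J_\varepsilon$. The paper has no separate boundedness step. Lemma~\ref{lem3.8} is stated as the full $(PS)_{c_\varepsilon}$ condition, and the proof of Lemma~\ref{lem3.7} simply opens with ``since $\{u_n\}$ is bounded''. So within the paper's framework you can cite Lemma~\ref{lem3.8} and stop. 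You are right that boundedness is being taken for granted there, but the argument you offer to supply it fails.

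After merging $F_1$ and $F_2$, the $\tfrac14$-identity reads
\[
J_\varepsilon(u)-\tfrac14\langle J_\varepsilon'(u),u\rangle
=\tfrac14\int_{\R^3}|(-\Delta)^{\frac\alpha2}u|^2\,dx
+\tfrac14\int_{\R^3}u^2\Bigl[V(\varepsilon x)+2-\log u^2+\bigl(1-\tfrac4p\bigr)|u|^{p-2}\Bigr]dx .
\]
To absorb $\tfrac14\int u^2\log^+u^2\le\tfrac14\bigl(\eta\|u\|_p^p+C_\eta\|u\|_2^2\bigr)$ you need two things at once: $\eta<1-\tfrac4p$, and $C_\eta$ below a fixed constant determined by $V_0$ (at best $C_\eta<V_0+2$). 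The optimal constant is $C_\eta=\tfrac{2}{p-2}\bigl(\log\tfrac{2}{\eta(p-2)}-1\bigr)$, which blows up as $\eta\to0$, so there is no free parameter to choose.

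Concretely, with $V_0$ in place of $V(\varepsilon x)$, the minimum of the bracket over $s>0$ is
\[
V_0+2+\tfrac{2}{p-2}\Bigl(1-\log\tfrac{2p}{(p-2)(p-4)}\Bigr).
\]
This is negative for $p$ close to $4$. That regime is forced when $\alpha$ is near $\tfrac34$, since then $2_\alpha^*\to4$. So the integrand is negative on a range of moderate $s$, and no pointwise absorption can work.

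Your fallbacks do not help either. Interpolation between $L^2$ and $L^p$ bounds intermediate norms in terms of $\|u\|_2$ and $\|u\|_p$; it cannot bound $\|u\|_2$ itself. Any combination of your two identities is a multiple of $J_\varepsilon-\theta\langle J_\varepsilon',u\rangle$. For $\theta>\tfrac14$ this has the wrong Poisson sign; for $\theta\le\tfrac14$ its local integrand is pointwise no better than at $\theta=\tfrac14$.

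The idea you are missing is that the bad Poisson sign in the $\tfrac12$-identity is harmless. That identity contains no logarithm and no $F_1$. Combined with $\int\phi_u^\alpha u^2\le C\|u\|_{H_\varepsilon}^4$ and $p>4$, it shows: if $\|u_n\|_{H_\varepsilon}\to\infty$, then $w_n:=u_n/\|u_n\|_{H_\varepsilon}\to0$ in $L^p$, hence in every $L^q$ with $2<q<2_\alpha^*$. For sequences lying on $\mathcal N_\varepsilon$, the fibering maximality of Lemma~\ref{lem3.3} then gives
\[
\tfrac{R^2}{2}\le\liminf_n J_\varepsilon(Rw_n)\le\limsup_n J_\varepsilon(u_n)
\]
for every $R>0$, which is a contradiction. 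So a workable fix is to produce the $(PS)_{c_\varepsilon}$-sequence on $\mathcal N_\varepsilon$ rather than from the bare mountain-pass theorem.
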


\begin{proof}
By Lemma~\ref{lem3.6}(iv), there exists \(\varepsilon_0>0\) such that
\[
c_\varepsilon<c_\infty,
\qquad \forall\, \varepsilon\in(0,\varepsilon_0).
\]
Fix \(\varepsilon\in(0,\varepsilon_0)\). By Lemma~\ref{lem3.2}, \(J_\varepsilon\) has the mountain-pass geometry. Hence the Mountain Pass Theorem yields a \((PS)_{c_\varepsilon}\)-sequence \(\{u_n\}\subset X_\varepsilon\) such that
\[
J_\varepsilon(u_n)\to c_\varepsilon,
\qquad
J_\varepsilon'(u_n)\to0
\quad \text{in }X_\varepsilon^*.
\]
Since \(c_\varepsilon<c_\infty\), Lemma~\ref{lem3.8} implies, up to a subsequence,
\[
u_n\to u_\varepsilon
\qquad \text{in }X_\varepsilon
\]
for some \(u_\varepsilon\in X_\varepsilon\setminus\{0\}\). Therefore
\[
J_\varepsilon'(u_\varepsilon)=0
\qquad\text{and}\qquad
J_\varepsilon(u_\varepsilon)=c_\varepsilon.
\]
By Lemma~\ref{lem3.6}(ii),
\[
c_\varepsilon
=
\inf_{u\in\mathcal N_\varepsilon}J_\varepsilon(u).
\]
Hence \(u_\varepsilon\) is a ground state solution of the rescaled problem.
\end{proof}

\section{Autonomous problem and concentration tools}

Throughout this section, we work in the autonomous space
\[
X:=H^{\alpha}(\R^{3})\cap L^{F_{1}}(\R^{3}).
\]

\subsection{The autonomous family and ground states}

For every \(\mu>-1\), define
\[
J_{\mu}(u)
=
\frac12\int_{\R^{3}}
\Bigl(
|(-\Delta)^{\frac{\alpha}{2}}u|^{2}
+
(\mu+1)|u|^{2}
\Bigr)\,dx
+
\frac14\int_{\R^{3}}\phi_{u}^{\alpha}u^{2}\,dx
+
\int_{\R^{3}}F_{1}(u)\,dx
-
\int_{\R^{3}}F_{2}(u)\,dx
-
\frac1p\int_{\R^{3}}|u|^{p}\,dx,
\]
and
\[
\mathcal N_{\mu}
=
\left\{
u\in X\setminus\{0\}:
\langle J_{\mu}'(u),u\rangle=0
\right\},
\qquad
c_{\mu}
=
\inf_{u\in\mathcal N_{\mu}}J_{\mu}(u).
\]
In particular,
\[
c_0=c_{V_0},
\qquad
c_\infty=c_{V_\infty}.
\]

Arguing exactly as in Section~3, one sees that for every \(\mu>-1\) and every \(u\in X\setminus\{0\}\), there exists a unique \(t_\mu(u)>0\) such that
\[
t_\mu(u)u\in\mathcal N_\mu,
\qquad
J_\mu(t_\mu(u)u)=\max_{t\ge0}J_\mu(tu).
\]
Moreover, \(\mathcal N_\mu\) is a \(C^1\)-manifold, a natural constraint for \(J_\mu\), and
\[
c_\mu
=
\inf_{u\in X\setminus\{0\}}\max_{t\ge0}J_\mu(tu)
>0.
\]

\begin{lemma}\label{lem4.1}
Let \(\mu>-1\), and let \(\{u_n\}\subset \mathcal N_{\mu}\) satisfy
\[
J_{\mu}(u_n)\to c_{\mu}.
\]
Then, up to a subsequence, one of the following alternatives holds:
\begin{itemize}
\item[(i)] \(u_n\to u\) strongly in \(X\);
\item[(ii)] there exists a sequence \(\{y_n\}\subset\R^{3}\) such that
\[
w_n(x):=u_n(x+y_n)
\]
converges strongly in \(X\).
\end{itemize}
In both cases, the limit is a nontrivial critical point of \(J_{\mu}\) at level \(c_{\mu}\). In particular, \(c_{\mu}\) is attained for every \(\mu>-1\).
\end{lemma}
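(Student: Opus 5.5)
Since \(J_\mu\) is invariant under translations, the plan is to build a Palais--Smale sequence at level \(c_\mu\), use the translations to recover a nonzero weak limit, and then upgrade weak convergence to strong convergence through the splitting of Lemma~\ref{lem2.4}.

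\textbf{Step 1 (from a minimizing sequence to a Palais--Smale sequence).} Since \(\mathcal N_\mu\) is a \(C^1\)-manifold with \(\langle \mathcal H_\mu'(u),u\rangle<0\) on \(\mathcal N_\mu\) (same proof as Lemma~\ref{lem3.4}), Ekeland's variational principle on \(\mathcal N_\mu\) produces a new minimizing sequence, still denoted \(\{u_n\}\), with \(\|u_n-\tilde u_n\|_X\to0\) relative to the original one. For this sequence we have \(J_\mu'(u_n)-\lambda_n\mathcal H_\mu'(u_n)\to0\). Testing with \(u_n\) gives \(\lambda_n\langle\mathcal H_\mu'(u_n),u_n\rangle\to0\).

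\textbf{Step 2 (boundedness and the multiplier).} Boundedness in \(X\) comes from the Nehari identity:
\[
J_\mu(u_n)-\tfrac14\langle J_\mu'(u_n),u_n\rangle
=\tfrac14\|u_n\|_{H_\mu}^2+\int \Bigl(F_1(u_n)-\tfrac14G_1(u_n)\Bigr)\,dx+\int\Bigl(\tfrac14G_2(u_n)-F_2(u_n)\Bigr)\,dx+\Bigl(\tfrac14-\tfrac1p\Bigr)\|u_n\|_p^p .
\]
Here \(F_1-\tfrac14G_1\ge \tfrac12F_1\) by Lemma~\ref{lem2.3}(i). The remaining difficulty is that \(\tfrac14G_2-F_2\) may be negative. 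A cleaner route is to use the original logarithmic form: \(J_\mu(u)-\tfrac12\langle J_\mu'(u),u\rangle=\tfrac12\|u\|_2^2-\tfrac14\int\phi_u u^2+(\tfrac12-\tfrac1p)\|u\|_p^p\). This is not directly signed either, so the plan is to combine the \(\tfrac14\)- and \(\tfrac12\)-identities so that the \(L^p\) term dominates. This gives bounds on \(\|u_n\|_p\) and \(\|u_n\|_2\), and then on \(\|u_n\|_{H_\mu}\) and \(\int F_1(u_n)\). Hence \(\{u_n\}\) is bounded in \(X\).

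Once the sequence is bounded, \(\langle\mathcal H_\mu'(u_n),u_n\rangle\) stays bounded away from \(0\). To see this, I would argue that if \(\limsup\) were \(0\), then comparing with \(\Psi\) in Lemma~\ref{lem3.4} gives \(\Psi'(1)\le -(2\int\phi_{u_n}u_n^2+(p-2)\|u_n\|_p^p)\). The right-hand side is bounded away from \(0\) by Lemma~\ref{lem3.1} together with the Nehari identity, which forces \(\|u_n\|_p\ge c>0\). Therefore \(\lambda_n\to0\), and \(\{u_n\}\) is a \((PS)_{c_\mu}\)-sequence for \(J_\mu\) on \(X\).

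\textbf{Step 3 (translation to a nonzero weak limit).} Vanishing is excluded exactly as in Lemma~\ref{lem3.7}: Lions' lemma together with the Nehari identity would give \(u_n\to0\) in \(X\), contradicting \(c_\mu>0\). So there are \(y_n\) with \(\int_{B_R(y_n)}u_n^2\ge\delta\). Translation invariance keeps \(w_n=u_n(\cdot+y_n)\) a bounded \((PS)_{c_\mu}\)-sequence, and \(w_n\rightharpoonup w\neq0\) with \(J_\mu'(w)=0\), so \(w\in\mathcal N_\mu\). Alternative (i) corresponds to \(\{y_n\}\) bounded, where no translation is needed up to a compact shift; alternative (ii) is the general case.

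\textbf{Step 4 (strong convergence).} This is the main obstacle. With \(z_n=w_n-w\), Lemma~\ref{lem2.4} gives \(c_\mu=J_\mu(w)+J_\mu(z_n)+o(1)\) and \(\langle J_\mu'(z_n),z_n\rangle=o(1)\). I will use the nonnegativity of \(J_\mu-\tfrac14\langle J_\mu',\cdot\rangle\) up to the controllable term from Step 2, applied to \(z_n\), to get \(\liminf J_\mu(z_n)\ge0\). Hence \(J_\mu(w)\le c_\mu\), and since \(w\in\mathcal N_\mu\) we get \(J_\mu(w)=c_\mu\) and \(J_\mu(z_n)\to0\). Then, if \(z_n\) does not vanish, the profile argument of Lemma~\ref{lem3.7} would produce a second profile with energy at least \(c_\mu\), which is impossible. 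So \(z_n\to0\) in \(L^r\) for \(r\in(2,2_\alpha^*)\), and the Nehari identity for \(z_n\) gives \(\|z_n\|_{H_\mu}\to0\) and \(\int G_1(z_n)\to0\). Hence \(\int F_1(z_n)\to0\), and \(w_n\to w\) in \(X\) by the modular characterization of the Luxemburg norm.
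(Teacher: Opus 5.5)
Your architecture (Ekeland on \(\mathcal N_\mu\), passage to a free Palais--Smale sequence, exclusion of vanishing, translation, Br\'ezis--Lieb splitting) follows the paper. However, Step~4 has a genuine gap. You need \(\liminf_n J_\mu(z_n)\ge0\) for the remainder twice. First, to get \(J_\mu(w)\le c_\mu\). Second, to exclude a second profile: after extracting \(w^2\) with \(J_\mu(w^2)\ge c_\mu\) from a sequence with \(J_\mu(z_n)\to0\), the new remainder has energy tending to \(-J_\mu(w^2)<0\), which is a contradiction only if remainders have nonnegative energy. Nothing in your argument gives this. For every \(z\in X\),
\[
J_\mu(z)-\tfrac14\langle J_\mu'(z),z\rangle
=\tfrac14\|z\|_{H_\mu}^2+\tfrac14\|z\|_2^2-\tfrac14\int_{\R^3}z^2\log z^2\,dx+\Bigl(\tfrac14-\tfrac1p\Bigr)\|z\|_p^p .
\]
The part of the logarithmic integral over \(\{|z|>1\}\) is not small unless \(z_n\) already vanishes, which is what you are trying to prove. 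The integrand is not even pointwise nonnegative: for \(p\) close to \(4\), \(\tfrac14 s^2\log s^2\) exceeds \(\tfrac{\mu+2}{4}s^2+(\tfrac14-\tfrac1p)s^p\) for suitable \(s>1\). Your Step~2 produced no estimate on this term, so the ``controllable term'' is not controlled and the argument is circular. The paper needs no sign information on \(J_\mu\) away from \(\mathcal N_\mu\). It iterates the extraction until the remainder vanishes; this takes finitely many steps, since every nontrivial critical point has \(H_\mu\)-norm at least \(\beta_\mu\) and the quadratic part splits. It then uses the Nehari-type identity of the vanishing remainder to show that its energy tends to \(0\), which gives \(c_\mu=\sum_{j=1}^kJ_\mu(w^j)\ge k\,c_\mu\), hence \(k=1\). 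You should replace your one-step splitting by this full decomposition; strong convergence then follows as in Lemma~\ref{lem3.8}.

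A second problem, which the paper's own proof shares, is the multiplier step. The inequality \(\Psi'(1)\le-\bigl(2\int\phi_{u}^\alpha u^2+(p-2)\|u\|_p^p\bigr)\) has the wrong sign on the Poisson term. Since \(J_\mu(tu)\) contains \(+\tfrac{t^4}{4}\int\phi_u^\alpha u^2\), the quotient \(h_u'(t)/t\) contains \(+t^2\int\phi_u^\alpha u^2\), and on \(\mathcal N_\mu\) one has exactly
\[
\langle\mathcal H_\mu'(u),u\rangle=h_u''(1)=2\int_{\R^3}\phi_u^\alpha u^2\,dx-2\|u\|_2^2-(p-2)\|u\|_p^p ,
\]
which is the formula used in Lemma~\ref{lem5.3}. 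So Lemma~\ref{lem3.1} gives no uniform negative bound, and \(\lambda_n\to0\) does not follow as you state; this step needs an independent justification. Likewise, your boundedness sketch is not yet an argument. On \(\mathcal N_\mu\), every combination of your two identities equals \(J_\mu-\theta\langle J_\mu'(u),u\rangle\) for a single \(\theta\). None of these is visibly coercive: depending on \(\theta\), either the term \(\int_{\{|u|>1\}}u^2\log u^2\), the Poisson term, or the \(L^p\) term enters with the wrong sign. The paper simply asserts boundedness at this point.
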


\begin{proof}
Set
\[
\mathcal H_\mu(u)=\langle J_\mu'(u),u\rangle.
\]
Since \(\{u_n\}\subset\mathcal N_\mu\) and \(J_\mu(u_n)\to c_\mu\), the sequence \(\{u_n\}\) is bounded in \(X\).

Applying Ekeland's variational principle on the complete metric space \(\mathcal N_\mu\), we find \(\{v_n\}\subset\mathcal N_\mu\) such that
\[
J_\mu(v_n)\to c_\mu,
\qquad
\|u_n-v_n\|_X\to0,
\qquad
\|(J_\mu|_{\mathcal N_\mu})'(v_n)\|\to0.
\]
Hence there exists \(\lambda_n\in\R\) such that
\[
J_\mu'(v_n)-\lambda_n\mathcal H_\mu'(v_n)=o_n(1)
\qquad\text{in }X^*.
\]
Exactly as in Lemma~\ref{lem3.4}, the fibering-map argument gives
\[
\langle \mathcal H_\mu'(u),u\rangle<0
\qquad \forall\, u\in\mathcal N_\mu.
\]
More precisely, if
\[
h_u(t):=J_\mu(tu),
\qquad u\in\mathcal N_\mu,
\]
then
\[
\langle \mathcal H_\mu'(u),u\rangle=h_u''(1)
\le -2\int_{\R^3}\phi_u^\alpha u^2\,dx-(p-2)\int_{\R^3}|u|^p\,dx.
\]
By the autonomous analogue of Lemma~\ref{lem3.1}, there exist constants \(\beta_\mu,C_\mu>0\) such that
\[
\|u\|_{H_\mu}\ge\beta_\mu,
\qquad
\|u\|_{H_\mu}^2\le C_\mu\int_{\R^3}|u|^p\,dx
\qquad \forall\, u\in\mathcal N_\mu.
\]
Consequently,
\[
\langle \mathcal H_\mu'(u),u\rangle
\le -\frac{(p-2)\beta_\mu^2}{C_\mu}
=:-\sigma_\mu
\qquad \forall\, u\in\mathcal N_\mu.
\]
Testing the previous relation against \(v_n\), and using \(\mathcal H_\mu(v_n)=0\), we obtain \(\lambda_n\to0\). Therefore
\[
J_\mu'(v_n)\to0
\qquad \text{in }X^*.
\]
Thus \(\{v_n\}\) is a free \((PS)_{c_\mu}\)-sequence for \(J_\mu\).

Vanishing is impossible. Indeed, if
\[
\sup_{y\in\R^{3}}\int_{B_R(y)}|v_n|^{2}\,dx\to0
\qquad \forall\, R>0,
\]
then Lions' lemma yields
\[
v_n\to0
\qquad \text{in }L^r(\R^3),
\qquad \forall\, r\in(2,2_\alpha^*).
\]
Using \(J_\mu'(v_n)\to0\), \((f_2)\), and the positivity of the \(F_1\)- and Poisson terms, we infer that
\[
\|v_n\|_{H_\mu}\to0,
\qquad
\int_{\R^3}G_1(v_n)\,dx\to0.
\]
Since \(lF_1\le G_1\), it follows that \(\int_{\R^3}F_1(v_n)\,dx\to0\), hence \(v_n\to0\) in \(X\). This contradicts \(J_\mu(v_n)\to c_\mu>0\).

We now apply the standard translation-invariant profile decomposition based on Lemma~\ref{lem2.4}. There exist an integer \(k\ge1\), nontrivial critical points \(w^1,\dots,w^k\in X\) of \(J_\mu\), and translation sequences \(\{y_n^j\}\subset\R^3\) satisfying \(|y_n^i-y_n^j|\to\infty\) for \(i\neq j\), such that
\[
v_n-\sum_{j=1}^k w^j(\cdot-y_n^j)\to0
\qquad \text{in }L^r(\R^3),
\qquad \forall\, r\in[2,2_\alpha^*),
\]
and
\[
c_\mu
=
\lim_{n\to\infty}J_\mu(v_n)
=
\sum_{j=1}^k J_\mu(w^j).
\]
Since every nontrivial critical point of \(J_\mu\) belongs to \(\mathcal N_\mu\), each \(w^j\) satisfies
\[
J_\mu(w^j)\ge c_\mu.
\]
Hence necessarily \(k=1\). Denote the unique profile by \(w\).

If \(\{y_n^1\}\) is bounded, then, after passing to a subsequence, \(y_n^1\to y_0\). Replacing \(w(\cdot-y_n^1)\) by the translated profile \(w(\cdot-y_0)\), we obtain
\[
v_n\to w(\cdot-y_0)
\qquad \text{in }L^r(\R^3),
\qquad \forall\, r\in[2,2_\alpha^*),
\]
which yields alternative \((i)\).

If \(|y_n^1|\to\infty\), define
\[
w_n(x):=v_n(x+y_n^1).
\]
Then
\[
w_n\to w
\qquad \text{in }L^r(\R^3),
\qquad \forall\, r\in[2,2_\alpha^*),
\]
which yields alternative \((ii)\).

In either case, the strong convergence in \(X\) follows exactly as in the proof of Lemma~\ref{lem3.8}: from
\[
\langle J_\mu'(\xi_n)-J_\mu'(\xi),\xi_n-\xi\rangle=o_n(1),
\]
where \(\xi_n\) denotes either \(v_n\) or \(w_n\), and \(\xi\) denotes the corresponding limit, the convexity of \(F_1\), the strong \(L^p\)-convergence, and Lemma~\ref{lem2.5}(ii),(vi) imply
\[
\xi_n\to\xi
\qquad \text{in }H_\mu.
\]
Since \(\{\xi_n-\xi\}\) is bounded in \(L^{F_1}(\R^3)\), \(F_1\in(\Delta_2)\), and \(\xi_n-\xi\to0\) a.e. in \(\R^3\), the modular characterization of the Luxemburg norm gives
\[
\xi_n\to\xi
\qquad \text{in }L^{F_1}(\R^3).
\]
Therefore \(\xi_n\to\xi\) strongly in \(X\). Finally,
\[
J_\mu(\xi)=\lim_{n\to\infty}J_\mu(v_n)=c_\mu,
\qquad
J_\mu'(\xi)=0.
\]
Since \(\|u_n-v_n\|_X\to0\), the same conclusion holds for the original sequence \(\{u_n\}\).
\end{proof}

\begin{lemma}\label{lem4.2}
For every \(\mu>-1\), the level \(c_{\mu}\) is attained. Moreover, if
\[
-1<\mu_1<\mu_2,
\]
then
\[
c_{\mu_1}<c_{\mu_2}.
\]
In particular,
\[
c_0<c_{\infty}.
\]
\end{lemma}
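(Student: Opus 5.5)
Attainment of \(c_\mu\) is already part of Lemma~\ref{lem4.1}: given any minimizing sequence \(\{u_n\}\subset\mathcal N_\mu\) with \(J_\mu(u_n)\to c_\mu\), one of the alternatives (i) or (ii) there produces a nontrivial critical point \(w_\mu\in\mathcal N_\mu\) with \(J_\mu(w_\mu)=c_\mu\). So the plan concentrates on the strict monotonicity. The last assertion then follows by taking \(\mu_1=V_0\) and \(\mu_2=V_\infty\); this is allowed because \((V)\) gives \(-1<V_0<V_\infty\).

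To compare \(c_{\mu_1}\) and \(c_{\mu_2}\), fix \(-1<\mu_1<\mu_2\) and let \(w:=w_{\mu_2}\in\mathcal N_{\mu_2}\) be a minimizer for \(c_{\mu_2}\); it exists by the first part. Since \(w\neq0\), Lemma~\ref{lem3.3}, applied in its autonomous version, gives a unique \(t_1:=t_{\mu_1}(w)>0\) with \(t_1w\in\mathcal N_{\mu_1}\). The key observation is that \(J_{\mu_1}\) and \(J_{\mu_2}\) differ only in the coefficient of the \(L^2\)-term:
\[
J_{\mu_1}(v)=J_{\mu_2}(v)-\frac{\mu_2-\mu_1}{2}\int_{\R^3}v^2\,dx ,\qquad v\in X .
\]
This gives the chain
\[
c_{\mu_1}\le J_{\mu_1}(t_1w)=J_{\mu_2}(t_1w)-\frac{\mu_2-\mu_1}{2}\,t_1^2\int_{\R^3}w^2\,dx\le \max_{t\ge0}J_{\mu_2}(tw)-\frac{\mu_2-\mu_1}{2}\,t_1^2\|w\|_2^2 .
\]
Because \(w\in\mathcal N_{\mu_2}\), the uniqueness in Lemma~\ref{lem3.3} shows that the maximum of \(t\mapsto J_{\mu_2}(tw)\) is attained at \(t=1\), so that maximum equals \(J_{\mu_2}(w)=c_{\mu_2}\). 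The subtracted term is strictly positive since \(t_1>0\), \(w\neq0\) and \(\mu_2>\mu_1\). Hence \(c_{\mu_1}<c_{\mu_2}\).

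The argument is short, and its only genuine prerequisite is that \(c_{\mu_2}\) is attained. If we knew only that \(c_{\mu_2}\) is an infimum, the strict gap \(\tfrac{\mu_2-\mu_1}{2}t_1^2\|w\|_2^2\) could degenerate along a minimizing sequence. That gap is bounded away from zero only if \(t_1\) and \(\|w\|_2\) stay away from zero, and establishing that would require extra estimates based on Lemma~\ref{lem3.1}. Lemma~\ref{lem4.1} removes this issue entirely, so I expect no real obstacle. The one point to check is that the fibering results, Lemmas~\ref{lem3.3} and \ref{lem3.4}, indeed carry over verbatim to \(J_\mu\) on \(X\), as the paper asserts at the beginning of this section.
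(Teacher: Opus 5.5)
Your proposal is correct and follows the paper's proof. You get attainment from Lemma~\ref{lem4.1}, then bound \(c_{\mu_1}\) along the ray through a minimizer \(w\) of \(c_{\mu_2}\), using that \(J_{\mu_1}\) and \(J_{\mu_2}\) differ by \(\frac{\mu_2-\mu_1}{2}\int_{\R^3}v^2\,dx\). Evaluating at the \(\mu_1\)-fiber maximizer \(t_1>0\) is a slightly cleaner way to make the inequality strict than the paper's step from the pointwise strict inequality \(J_{\mu_1}(tw)<J_{\mu_2}(tw)\), \(t>0\), to a strict inequality between the maxima.
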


\begin{proof}
The attainment statement is exactly Lemma~\ref{lem4.1}.

Let \(-1<\mu_1<\mu_2\), and let \(u_{\mu_2}\in\mathcal N_{\mu_2}\) satisfy
\[
J_{\mu_2}(u_{\mu_2})=c_{\mu_2}.
\]
By the mountain-pass characterization,
\[
c_{\mu_1}
=
\inf_{u\in X\setminus\{0\}}\max_{t\ge0}J_{\mu_1}(tu)
\le
\max_{t\ge0}J_{\mu_1}(tu_{\mu_2}).
\]
For every \(t>0\),
\[
J_{\mu_1}(tu_{\mu_2})
=
J_{\mu_2}(tu_{\mu_2})
-
\frac{\mu_2-\mu_1}{2}t^2\int_{\R^{3}}|u_{\mu_2}|^2\,dx
<
J_{\mu_2}(tu_{\mu_2}).
\]
Therefore
\[
\max_{t\ge0}J_{\mu_1}(tu_{\mu_2})
<
\max_{t\ge0}J_{\mu_2}(tu_{\mu_2})
=
J_{\mu_2}(u_{\mu_2})
=
c_{\mu_2},
\]
and hence \(c_{\mu_1}<c_{\mu_2}\).

Since \(V_\infty>V_0>-1\) by \((V)\), we conclude that
\[
c_0=c_{V_0}<c_{V_\infty}=c_\infty.
\]
\end{proof}

\begin{lemma}\label{lem4.3}
Let \(u_0\in X\) be a ground state solution of the limit problem, namely
\[
u_0\in\mathcal N_0,
\qquad
J_0(u_0)=c_0,
\qquad
J_0'(u_0)=0.
\]
Then \(u_0\) can be chosen nonnegative. Moreover, every nonnegative ground state solution is strictly positive in \(\R^3\).
\end{lemma}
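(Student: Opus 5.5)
Two steps: produce a nonnegative ground state by replacing $u_0$ with $|u_0|$ (or $t|u_0|$), then show that any nonnegative ground state is strictly positive by a strong maximum principle for the fractional Laplacian.

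\emph{Nonnegativity.} Given a ground state $u_0$ from Lemma~\ref{lem4.1}, consider $|u_0|\in X$. The pointwise inequality $\bigl||u_0(x)|-|u_0(y)|\bigr|\le |u_0(x)-u_0(y)|$ gives
\[
[|u_0|]_{H^\alpha(\R^3)}\le [u_0]_{H^\alpha(\R^3)}.
\]
All other terms of $J_0$ depend only on $u_0^2$ or on even functions of $u_0$; this covers $\phi^\alpha_{|u_0|}=\phi^\alpha_{u_0}$, $F_1$, $F_2$ and $|u|^p$. Hence for every $t\ge0$,
\[
J_0(t|u_0|)\le J_0(tu_0).
\]
By Lemma~\ref{lem3.3} (autonomous version),
\[
c_0\le \max_{t\ge0}J_0(t|u_0|)\le \max_{t\ge0}J_0(tu_0)=J_0(u_0)=c_0.
\]
So $t_0|u_0|\in\mathcal N_0$ attains $c_0$, where $t_0=t_0(|u_0|)$. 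The Nehari manifold is a natural constraint: $\mathcal N_0$ is a $C^1$ manifold with $\langle\mathcal H_0'(u),u\rangle<0$ on $\mathcal N_0$, so the Lagrange multiplier vanishes, exactly as in the proof of Lemma~\ref{lem4.1}. Therefore a minimizer of $J_0$ on $\mathcal N_0$ is a free critical point, and $t_0|u_0|$ is a nonnegative ground state. (One can also show $t_0=1$, but this is not needed.)

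\emph{Strict positivity.} Let $u\ge0$, $u\not\equiv0$, be a ground state. It solves weakly
\[
(-\Delta)^\alpha u+(V_0+1)u+\phi_u^\alpha u+F_1'(u)=F_2'(u)+u^{p-1}.
\]
\begin{itemize}
\item[(a)] \emph{Regularity.} A Moser iteration or the Bessel-kernel approach gives $u\in L^\infty(\R^3)$. The right-hand side has subcritical growth by $(f_2)$, and the terms $F_1'(u)u\ge0$ and $\phi_u^\alpha u^2\ge0$ have favorable sign. H\"older estimates for $(-\Delta)^\alpha$ then make $u$ continuous, and $\phi_u^\alpha$ is bounded since $u\in L^1\cap L^\infty$ locally.
\item[(b)] \emph{Contradiction at a zero.} Suppose $u(x_0)=0$ at some point. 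Near $u=0$ we have $F_1'(u)=-(\log u^2+1)u$, so
\[
(V_0+1)u+F_1'(u)=-u\log u^2+V_0u,
\]
and $-u\log u^2\ge0$ for small $u$. These terms cannot simply be moved to the right-hand side as a bounded potential, because the coefficient $\log u^2$ is unbounded. However, since $-u\log u^2\ge0$ near zero, they still have the sign needed for a contradiction. Evaluate the equation at $x_0$ in the pointwise or viscosity sense:
\[
(-\Delta)^\alpha u(x_0)=-C(\alpha)\,\mathrm{P.V.}\int_{\R^3}\frac{u(y)}{|x_0-y|^{3+2\alpha}}\,dy<0,
\]
because $u\ge0$ and $u\not\equiv0$. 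At $x_0$ all the other terms vanish, since $u(x_0)=0$ and $F_2'(0)=0$. This is a contradiction.
\item[(c)] \emph{Justification.} The argument in (b) needs $u$ to be regular enough, $C^{2\alpha+\gamma}$ near $x_0$, for $(-\Delta)^\alpha u(x_0)$ to be defined pointwise. I will get this either from Schauder estimates or by a weak formulation: test against suitable functions near a minimum point, or compare with a small barrier built from the $\alpha$-harmonic kernel.
\end{itemize}

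\emph{Main obstacle.} The hardest step is (b)--(c). The logarithmic term $u\log u^2$ is not Lipschitz at $0$, so the standard strong maximum principle, which needs a bounded zero-order coefficient $c(x)$ in $(-\Delta)^\alpha u+c(x)u\ge0$, does not apply directly. I would first try the pointwise evaluation at a zero, which only requires continuity of the nonlinear terms, after securing enough regularity. Since $s\mapsto s\log s^2$ is H\"older continuous, bootstrapping in H\"older spaces should yield $u\in C^{1,\gamma}$ or $C^{2\alpha+\gamma}$ locally. If that fails, I would fall back on a weak-sense argument: test the equation with $\varphi\ge0$ supported near $x_0$ and use the nonlocal tail $\iint u(y)\varphi(x)|x-y|^{-3-2\alpha}\,dx\,dy>0$ to contradict the vanishing of $u$ on a neighbourhood.
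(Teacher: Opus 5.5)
Your proposal is correct and follows the paper's proof essentially step for step. It uses the same replacement of $u_0$ by $t_0|u_0|$ via $J_0(t|u_0|)\le J_0(tu_0)$, and the same contradiction at a zero $x_0$ between $(-\Delta)^\alpha u(x_0)<0$ and the vanishing there of every other term (since $s\log s^2\to 0$); your explicit natural-constraint and regularity justifications fill in what the paper simply calls standard, though your weak-formulation fallback only excludes vanishing on an open set, so the pointwise (or viscosity) evaluation at $x_0$ is what actually carries the argument.
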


\begin{proof}
Let \(u_0\) be any ground state solution. Since
\[
[\,|u_0|\,]_{H^{\alpha}(\R^3)}\le [u_0]_{H^{\alpha}(\R^3)},
\]
and all the other terms in \(J_0\) depend only on \(|u_0|\), we have
\[
J_0(t|u_0|)\le J_0(tu_0),
\qquad \forall\, t\ge0.
\]
Let \(t_0>0\) be such that
\[
t_0|u_0|\in\mathcal N_0
\qquad\text{and}\qquad
J_0(t_0|u_0|)=\max_{t\ge0}J_0(t|u_0|).
\]
Then
\[
c_0
\le
J_0(t_0|u_0|)
\le
\max_{t\ge0}J_0(tu_0)
=
J_0(u_0)
=
c_0.
\]
Hence \(t_0|u_0|\) is also a ground state, and we may assume \(u_0\ge0\).

Since \(u_0\) solves
\[
(-\Delta)^\alpha u_0+V_0u_0+\phi_{u_0}^\alpha u_0
=
u_0\log u_0^2+u_0^{p-1}
\qquad \text{in }\R^3,
\]
standard regularity for fractional equations implies that \(u_0\) is continuous. If \(u_0(x_0)=0\) for some \(x_0\in\R^3\), then \(x_0\) is a global minimum point of \(u_0\). Since \(u_0\ge0\) and \(u_0\not\equiv0\), the pointwise formula for \((-\Delta)^\alpha\) yields
\[
(-\Delta)^\alpha u_0(x_0)<0.
\]
On the other hand, since \(t\log t^2\to0\) as \(t\to0^+\), evaluating the equation at \(x_0\) gives
\[
(-\Delta)^\alpha u_0(x_0)=0,
\]
a contradiction. Therefore
\[
u_0>0
\qquad \text{in }\R^3.
\]
\end{proof}

\subsection{Translation compactness of low-energy Nehari sequences}

\begin{lemma}\label{lem4.4}
Let \(\varepsilon_n\to0^{+}\), and let \(u_n\in\mathcal N_{\varepsilon_n}\) satisfy
\[
J_{\varepsilon_n}(u_n)\to c_0.
\]
Then there exists a sequence \(\{\widetilde y_n\}\subset\R^{3}\) such that, setting
\[
v_n(x)=u_n(x+\widetilde y_n),
\qquad
y_n=\varepsilon_n\widetilde y_n,
\]
one has, up to a subsequence,
\[
v_n\to v
\qquad \text{strongly in }X,
\]
where \(v\neq0\) is a ground state solution of the limit problem, and
\[
y_n\to y_0\in M.
\]
\end{lemma}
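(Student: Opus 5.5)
The plan is the standard concentration argument of Rabinowitz/Alves type, adapted to the Orlicz framework. First I would show that $\{u_n\}$ is bounded in $X$, uniformly in $n$. Since $u_n\in\mathcal N_{\varepsilon_n}$, the Nehari identity rewrites the energy as
\[
J_{\varepsilon_n}(u_n)=J_{\varepsilon_n}(u_n)-\tfrac1p\langle J_{\varepsilon_n}'(u_n),u_n\rangle .
\]
Using $p>4$ and $(f_1)$–$(f_3)$, this controls $\|u_n\|_{H_{\varepsilon_n}}^2$, $\int\phi u_n^2$ and $\int F_1(u_n)$, up to a term $\int u_n^2$ coming from the logarithm; that term is absorbed into the $H_{\varepsilon_n}$-norm, since $V+1\ge V_0+1>0$, combined with Lemma~\ref{lem3.1}. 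Because $V(\varepsilon_n x)+1\ge V_0+1$, the $H^\alpha$ norms are then bounded as well.

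Next I would exclude vanishing exactly as in Lemma~\ref{lem3.7}. If $\sup_y\int_{B_R(y)}u_n^2\to0$, Lions' lemma together with the Nehari identity forces $\|u_n\|_{H_{\varepsilon_n}}\to0$, contradicting Lemma~\ref{lem3.1}. This yields $\widetilde y_n$, $R$ and $\eta>0$ with $\int_{B_R(\widetilde y_n)}u_n^2\ge\eta$. Setting $v_n=u_n(\cdot+\widetilde y_n)$, we get $v_n\rightharpoonup v\ne0$ in $X$ and a.e.

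To compare with $c_0$, let $t_n>0$ be such that $t_nv_n\in\mathcal N_0$ (equivalently $\mathcal N_{V_0}$). Since $V\ge V_0$, translation invariance and Lemma~\ref{lem3.3} give
\[
c_0\le J_0(t_nv_n)\le J_{\varepsilon_n}(t_nu_n)\le J_{\varepsilon_n}(u_n)=c_0+o(1).
\]
So $\{t_nv_n\}$ is a minimizing sequence on $\mathcal N_0$. The argument of Lemma~\ref{lem3.5} (growth $t^p$ versus $t^2\log t$, and Lemma~\ref{lem3.1}) shows $t_n\to t_0>0$. Lemma~\ref{lem4.1} applied to $\{t_nv_n\}$ then gives strong convergence in $X$ up to translations. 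Since $v_n\rightharpoonup v\neq0$ already, no further translation is needed: alternative (ii) with $|y_n|\to\infty$ would contradict the nontrivial weak limit. Hence $t_nv_n\to t_0v$ strongly, and therefore $v_n\to v$ strongly in $X$. Renaming $t_0v$ as $v$ (or checking $t_0=1$ via the limiting Nehari identity), $v$ is a ground state of the limit problem.

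The main step is $y_n=\varepsilon_n\widetilde y_n\to y_0\in M$. I would first show that $\{y_n\}$ is bounded. If $|y_n|\to\infty$ along a subsequence, then strong convergence of $v_n$, Fatou's lemma, and $V(\varepsilon_nx+y_n)\to V_\infty$ pointwise give
\[
c_0=\lim J_0(t_nv_n)\ge\liminf J_{\varepsilon_n}(t_nu_n)\ge J_{V_\infty}(t_0v)\ge c_{V_\infty}=c_\infty ,
\]
where the middle inequality uses $J_{\varepsilon_n}(t_nu_n)=\tfrac12\int(V(\varepsilon_nx+y_n)+1)t_n^2v_n^2+\dots$. This contradicts Lemma~\ref{lem4.2}. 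So, along a subsequence, $y_n\to y_0$, and the same Fatou argument yields
\[
c_0\ge J_{V(y_0)}(t_0v)\ge c_{V(y_0)}.
\]
The strict monotonicity of Lemma~\ref{lem4.2} then forces $V(y_0)\le V_0$, i.e.\ $y_0\in M$. The delicate point is justifying the Fatou step uniformly for the logarithmic and $F_1$ parts. This works because those terms do not involve $V$ and converge by the strong convergence in $X$ just established, so only the potential term requires Fatou's lemma.
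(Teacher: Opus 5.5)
Your overall architecture matches the paper: Lions' lemma to get $\widetilde y_n$, then projecting $v_n$ onto $\mathcal N_0$ with the squeeze $c_0\le J_0(t_nv_n)\le J_{\varepsilon_n}(t_nu_n)\le J_{\varepsilon_n}(u_n)$, then Lemma~\ref{lem4.1} together with the nonzero weak limit to exclude escaping translations. However, the step that locates $y_0$ is wrong as written. You assert $J_{V_\infty}(t_0v)\ge c_{V_\infty}$ and $J_{V(y_0)}(t_0v)\ge c_{V(y_0)}$. Since $c_\mu=\inf_{u\neq0}\max_{t\ge0}J_\mu(tu)$, a value $J_\mu(w)$ is bounded below by $c_\mu$ only when $w$ realizes the fiber maximum, i.e.\ $w\in\mathcal N_\mu$. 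But $t_0v\in\mathcal N_0$, so for $\mu>V_0$ you have $\langle J_\mu'(t_0v),t_0v\rangle=(\mu-V_0)\|t_0v\|_2^2>0$. The point $t_0v$ therefore lies strictly before the maximum of $t\mapsto J_\mu(t\,t_0v)$, and nothing prevents $J_\mu(t_0v)=c_0+\frac{\mu-V_0}{2}\|t_0v\|_2^2$ from lying below $c_\mu$.

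There are two repairs. The direct one, essentially the paper's, needs no comparison with $c_\mu$. Your Fatou computation already gives $c_0\ge\liminf J_{\varepsilon_n}(t_nu_n)\ge c_0+\frac{\mu-V_0}{2}t_0^2\|v\|_2^2$ with $\mu=V_\infty$ (resp.\ $\mu=V(y_0)$), which forces $\mu=V_0$ since $v\neq0$. The paper phrases this as $\int(V(\varepsilon_nx+y_n)-V_0)v_n^2\,dx\to0$ against the mass bound $\int_{B_R(0)}v_n^2\ge\beta$. If you want to keep the monotonicity of $c_\mu$ from Lemma~\ref{lem4.2}, evaluate at the correct point of the fiber of $u_n$. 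Take $s_\mu>0$ with $s_\mu v\in\mathcal N_\mu$. Then $J_{\varepsilon_n}(u_n)\ge J_{\varepsilon_n}(s_\mu u_n)\to J_\mu(s_\mu v)=\max_{s\ge0}J_\mu(sv)\ge c_\mu$, using strong convergence of $v_n$ in $X$ and boundedness of $V$. This yields $c_0\ge c_\mu$, hence $\mu\le V_0$.

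Two smaller points. First, ``renaming $t_0v$ as $v$'' is not allowed. The statement requires the limit of $v_n$ itself to be a ground state, so you must prove $t_0=1$. The limiting Nehari identity $\langle J_0'(v),v\rangle=0$ is available only once you know $\int(V(\varepsilon_nx+y_n)-V_0)v_n^2\,dx\to0$. This follows from your squeeze, because $t_n\to t_0>0$, or from $y_0\in M$; uniqueness of the fiber maximum then gives $t_0=1$.

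Second, your boundedness sketch does not work. On $\mathcal N_{\varepsilon_n}$, the quantity $J_{\varepsilon_n}(u)-\frac1p\langle J_{\varepsilon_n}'(u),u\rangle$ contains $-(\frac12-\frac1p)\int u^2\log u^2\,dx$, while the $\int u^2$ term actually enters with a plus sign. That logarithmic term is negative and superquadratic on $\{|u|>1\}$ and cannot be absorbed into $\|u\|_{H_{\varepsilon_n}}^2$; Lemma~\ref{lem3.1} is a lower bound and does not help. The paper merely asserts this boundedness. A correct argument goes by contradiction:
\begin{itemize}
\item normalize $w_n=u_n/\|u_n\|_{\varepsilon_n}$;
\item use $J_{\varepsilon_n}(sw_n)\le J_{\varepsilon_n}(u_n)$ for a fixed large $s$ to rule out vanishing;
\item in the nonvanishing case, divide $J_{\varepsilon_n}(u_n)$ by $\|u_n\|_{\varepsilon_n}^p$ to obtain a negative limit, contradicting $J_{\varepsilon_n}(u_n)\ge c_{\varepsilon_n}>0$.
\end{itemize}
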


\begin{proof}
Since \(u_n\in\mathcal N_{\varepsilon_n}\) and \(J_{\varepsilon_n}(u_n)\to c_0\), the sequence \(\{u_n\}\) is bounded in \(X_{\varepsilon_n}\), hence in \(H^\alpha(\R^3)\).

We first exclude vanishing. If
\[
\sup_{\xi\in\R^3}\int_{B_R(\xi)}|u_n|^2\,dx\to0
\qquad \forall\, R>0,
\]
then, by Lions' lemma,
\[
u_n\to0
\qquad \text{in }L^r(\R^3),
\qquad \forall\, r\in(2,2_\alpha^*).
\]
Using
\[
\langle J_{\varepsilon_n}'(u_n),u_n\rangle=0,
\]
together with \((f_1)\), Lemma~\ref{lem2.5}(i), and \((f_2)\), exactly as in the proof of Lemma~\ref{lem3.7}, we deduce that
\[
\|u_n\|_{H_{\varepsilon_n}}\to0,
\qquad
\int_{\R^3}G_1(u_n)\,dx\to0.
\]
Since \(lF_1\le G_1\), it follows that
\[
\int_{\R^3}F_1(u_n)\,dx\to0,
\]
hence \(u_n\to0\) in \(X_{\varepsilon_n}\). Consequently \(J_{\varepsilon_n}(u_n)\to0\), contradicting \(J_{\varepsilon_n}(u_n)\to c_0>0\).

Therefore vanishing does not occur. Hence there exist \(R>0\), \(\beta>0\), and a sequence \(\{\widetilde y_n\}\subset\R^3\) such that
\[
\int_{B_R(\widetilde y_n)}|u_n|^2\,dx\ge\beta
\qquad \forall\, n.
\]
Define
\[
v_n(x)=u_n(x+\widetilde y_n),
\qquad
y_n=\varepsilon_n\widetilde y_n.
\]
Then
\[
\int_{B_R(0)}|v_n|^2\,dx\ge\beta
\qquad \forall\, n.
\]
Moreover, since \(\{u_n\}\) is bounded in \(X_{\varepsilon_n}\), the translation invariance of the \(H^\alpha\)-norm and of the modular
\[
u\mapsto \int_{\R^3}F_1(u)\,dx
\]
implies that \(\{v_n\}\) is bounded in \(H^\alpha(\R^3)\cap L^{F_1}(\R^3)=X\).

For each \(n\), define
\[
\widehat J_n(w)
=
\frac12\int_{\R^3}
\Bigl(
|(-\Delta)^{\frac{\alpha}{2}}w|^2
+
(V(\varepsilon_n x+y_n)+1)|w|^2
\Bigr)\,dx
+
\frac14\int_{\R^3}\phi_w^\alpha w^2\,dx
+
\int_{\R^3}F_1(w)\,dx
-
\int_{\R^3}F_2(w)\,dx
-
\frac1p\int_{\R^3}|w|^p\,dx.
\]
Then
\[
\widehat J_n(v_n)=J_{\varepsilon_n}(u_n)\to c_0
\]
and
\[
\langle \widehat J_n'(v_n),v_n\rangle
=
\langle J_{\varepsilon_n}'(u_n),u_n\rangle
=0.
\]

Let \(t_n>0\) be the unique number such that
\[
w_n=t_n v_n\in\mathcal N_0.
\]
Since \(V(\varepsilon_n x+y_n)\ge V_0\), we have
\[
J_0(\xi)\le \widehat J_n(\xi)
\qquad \forall\, \xi\in X.
\]
Moreover, \(v_n\in \widehat{\mathcal N}_n\), where
\[
\widehat{\mathcal N}_n
=
\left\{
w\in X\setminus\{0\}:\ \langle \widehat J_n'(w),w\rangle=0
\right\}.
\]
By the uniqueness of the fibering maximum for \(\widehat J_n\),
\[
\widehat J_n(w_n)=\widehat J_n(t_n v_n)\le \widehat J_n(v_n).
\]
Therefore
\[
c_0
\le
J_0(w_n)
\le
\widehat J_n(w_n)
\le
\widehat J_n(v_n)
=
c_0+o_n(1),
\]
and hence
\[
J_0(w_n)\to c_0.
\]

We now show that \(\{t_n\}\) is bounded away from \(0\) and \(+\infty\). Since \(w_n\in\mathcal N_0\) and \(J_0(w_n)\to c_0\), the autonomous analogue of Lemma~\ref{lem3.1} gives
\[
\|w_n\|_{H_0}\ge \beta_0>0,
\]
where
\[
\|u\|_{H_0}^2
=
\int_{\R^3}
\Bigl(
|(-\Delta)^{\frac{\alpha}{2}}u|^2+(V_0+1)|u|^2
\Bigr)\,dx.
\]
Since \(\{v_n\}\) is bounded in \(H^\alpha(\R^3)\), it is bounded in \(H_0\), and therefore
\[
t_n=\frac{\|w_n\|_{H_0}}{\|v_n\|_{H_0}}\ge \underline t>0.
\]

On the other hand, from
\[
\int_{B_R(0)}|v_n|^2\,dx\ge\beta
\]
and the continuous embedding \(H_0\hookrightarrow L^2(B_R(0))\), we obtain
\[
\|v_n\|_{H_0}\ge c_R>0
\qquad \forall\, n.
\]
Since \(\{w_n\}\subset\mathcal N_0\) and \(J_0(w_n)\to c_0\), the same boundedness argument as in Lemma~\ref{lem4.1} shows that \(\{w_n\}\) is bounded in \(X\), hence in \(H_0\). Consequently,
\[
t_n=\frac{\|w_n\|_{H_0}}{\|v_n\|_{H_0}}\le \overline t<+\infty.
\]
Thus, up to a subsequence,
\[
t_n\to t_0>0.
\]

Moreover,
\[
\int_{B_R(0)}|w_n|^2\,dx
=
t_n^2\int_{B_R(0)}|v_n|^2\,dx
\ge
\underline t^{\,2}\beta
\qquad \forall\, n.
\]
Since \(w_n\in\mathcal N_0\) and \(J_0(w_n)\to c_0\), Lemma~\ref{lem4.1} applies. We claim that
\[
w_n\to w
\qquad \text{strongly in }X
\]
for some ground state \(w\) of the limit problem.

Indeed, if alternative \((i)\) of Lemma~\ref{lem4.1} holds, there is nothing to prove. Suppose that alternative \((ii)\) holds. Then there exists \(\{z_n\}\subset\R^3\) such that
\[
\xi_n(x):=w_n(x+z_n)\to w
\qquad \text{strongly in }X.
\]
If \(|z_n|\to+\infty\), then
\[
\int_{B_R(0)}|w_n(x)|^2\,dx
=
\int_{B_R(z_n)}|\xi_n(x)|^2\,dx.
\]
Since \(\xi_n\to w\) strongly in \(L^2(\R^3)\) and \(w\in L^2(\R^3)\), the right-hand side tends to \(0\), which contradicts
\[
\int_{B_R(0)}|w_n|^2\,dx\ge \underline t^{\,2}\beta.
\]
Hence \(\{z_n\}\) is bounded. After passing to a subsequence, \(z_n\to z_0\), and therefore
\[
w_n(x)=\xi_n(x-z_n)\to w(x-z_0)
\qquad \text{strongly in }X.
\]
Renaming the limit, we obtain
\[
w_n\to w
\qquad \text{strongly in }X,
\]
where
\[
w\in\mathcal N_0,
\qquad
J_0(w)=c_0,
\qquad
J_0'(w)=0.
\]

Since \(t_n\to t_0>0\), it follows that
\[
v_n=t_n^{-1}w_n\to \bar v:=t_0^{-1}w
\qquad \text{strongly in }X.
\]
In particular,
\[
\bar v\neq0.
\]

Next we prove that
\[
\int_{\R^3}\bigl(V(\varepsilon_n x+y_n)-V_0\bigr)v_n^2\,dx\to0.
\]
Indeed,
\[
0\le \widehat J_n(w_n)-J_0(w_n)
=
\frac{t_n^2}{2}\int_{\R^3}\bigl(V(\varepsilon_n x+y_n)-V_0\bigr)v_n^2\,dx,
\]
while
\[
\widehat J_n(w_n)\le \widehat J_n(v_n)=c_0+o_n(1)
\qquad\text{and}\qquad
J_0(w_n)\to c_0.
\]
Hence the claim follows.

We now show that \(\{y_n\}\) is bounded. Suppose by contradiction that \(|y_n|\to+\infty\). Since
\[
V(x)\to V_\infty>V_0
\qquad \text{as } |x|\to+\infty,
\]
there exist \(\delta>0\) and \(R_\delta>0\) such that
\[
V(x)\ge V_0+\delta
\qquad \forall\, |x|\ge R_\delta.
\]
For \(n\) large, because \(|y_n|\to+\infty\) and \(\varepsilon_n\to0\), one has
\[
|\varepsilon_n x+y_n|\ge R_\delta
\qquad \forall\, x\in B_R(0),
\]
and therefore
\[
V(\varepsilon_n x+y_n)-V_0\ge\delta
\qquad \forall\, x\in B_R(0).
\]
It follows that
\[
\int_{\R^3}\bigl(V(\varepsilon_n x+y_n)-V_0\bigr)v_n^2\,dx
\ge
\delta\int_{B_R(0)}|v_n|^2\,dx
\ge
\delta\beta,
\]
contradicting the previous limit. Thus \(\{y_n\}\) is bounded. Up to a subsequence,
\[
y_n\to y_0\in\R^3.
\]

We claim that \(y_0\in M\). If \(V(y_0)>V_0\), choose \(\delta>0\) such that
\[
V(y_0)\ge V_0+2\delta.
\]
By continuity of \(V\), there exists \(\rho>0\) such that
\[
V(x)\ge V_0+\delta
\qquad \forall\, x\in B_\rho(y_0).
\]
Since \(y_n\to y_0\) and \(\varepsilon_n\to0\), for \(n\) large we have
\[
\varepsilon_n x+y_n\in B_\rho(y_0)
\qquad \forall\, x\in B_R(0),
\]
and thus
\[
V(\varepsilon_n x+y_n)-V_0\ge\delta
\qquad \forall\, x\in B_R(0).
\]
As above,
\[
\int_{\R^3}\bigl(V(\varepsilon_n x+y_n)-V_0\bigr)v_n^2\,dx
\ge
\delta\beta,
\]
again a contradiction. Therefore
\[
V(y_0)=V_0,
\qquad\text{that is,}\qquad
y_0\in M.
\]

Finally, since
\[
\langle \widehat J_n'(v_n),v_n\rangle=0,
\]
we have
\[
0
=
\langle J_0'(v_n),v_n\rangle
+
\int_{\R^3}\bigl(V(\varepsilon_n x+y_n)-V_0\bigr)v_n^2\,dx.
\]
Passing to the limit, using the strong convergence \(v_n\to\bar v\) in \(X\), we obtain
\[
\langle J_0'(\bar v),\bar v\rangle=0.
\]
Hence \(\bar v\in\mathcal N_0\). Moreover,
\[
J_0(\bar v)
=
\lim_{n\to\infty}
\left[
\widehat J_n(v_n)
-
\frac12\int_{\R^3}\bigl(V(\varepsilon_n x+y_n)-V_0\bigr)v_n^2\,dx
\right]
=
c_0.
\]
Thus \(\bar v\) is a ground state solution of the limit problem.

Since also \(w=t_0\bar v\in\mathcal N_0\), the uniqueness of the fibering maximum on the ray \(\{t\bar v:t>0\}\) implies
\[
t_0=1.
\]
Therefore
\[
v_n\to \bar v
\qquad \text{strongly in }X.
\]
Setting \(v=\bar v\), the proof is complete.
\end{proof}

\subsection{Uniform boundedness and decay estimates}

\begin{lemma}\label{lem4.5}
Let \(\varepsilon_n\to0^{+}\), and let \(u_n\in X_{\varepsilon_n}\) be positive critical points of \(J_{\varepsilon_n}\) such that
\[
J_{\varepsilon_n}(u_n)\to c_0.
\]
Let \(\{\widetilde y_n\}\subset\R^{3}\) be the sequence given by Lemma~\ref{lem4.4}, and set
\[
y_n:=\varepsilon_n\widetilde y_n,
\qquad
v_n(x):=u_n(x+\widetilde y_n).
\]
Then there exists \(C>0\), independent of \(n\), such that
\[
\|v_n\|_{L^\infty(\R^3)}\le C
\qquad \forall\, n.
\]
Moreover,
\[
v_n(x)\to0
\qquad \text{as }|x|\to\infty
\]
uniformly in \(n\).
\end{lemma}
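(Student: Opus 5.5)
The plan has two parts. First we get a uniform $L^\infty$ bound by a Moser iteration. Then we get uniform decay by rewriting the equation for $v_n$ as a linear fractional inequality with a fixed positive mass term, and comparing $v_n$ with the Bessel kernel. The starting point is Lemma~\ref{lem4.4}: $v_n\to v$ strongly in $X$, hence in $H^\alpha(\R^3)$ and in every $L^r(\R^3)$ with $r\in[2,2_\alpha^*]$. Each $v_n$ solves
\[
(-\Delta)^\alpha v_n+(V(\varepsilon_n x+y_n)+1)v_n+\phi_{v_n}^\alpha v_n
=F_2'(v_n)-F_1'(v_n)+v_n^{p-1}
\qquad\text{in }\R^3 .
\]
Since $v_n>0$, $\phi_{v_n}^\alpha\ge0$ (Lemma~\ref{lem2.5}(i)), $V+1\ge V_0+1>0$ and $F_1'(s)s\ge0$, we obtain the subsolution inequality
\[
(-\Delta)^\alpha v_n+(V_0+1)v_n\le g(v_n):=F_2'(v_n)+v_n^{p-1}\le C\bigl(v_n+v_n^{p-1}\bigr),
\]
using $(f_2)$. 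Only the positive part of the nonlinearity needs to be kept, which is what removes the singular logarithm near $0$.

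\textbf{$L^\infty$ bound.} We run the standard Moser iteration for the fractional Laplacian. For $\beta\ge1$ and $T>0$ we test with $v_nv_{n,T}^{2(\beta-1)}$, where $v_{n,T}=\min\{v_n,T\}$. We then use the inequality
\[
\iint\frac{(a-b)\bigl(a a_T^{2(\beta-1)}-b b_T^{2(\beta-1)}\bigr)}{|x-y|^{3+2\alpha}}\ge \frac{c}{\beta^2}\bigl[v_nv_{n,T}^{\beta-1}\bigr]_{H^\alpha}^2
\]
together with the Sobolev embedding, and we split the term $v_n^{p-2}$ into a bounded part and a part that is small in $L^{2_\alpha^*/(2_\alpha^*-2)}$. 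This gives
\[
\|v_n\|_{\beta 2_\alpha^*}\le (C\beta)^{1/\beta}\|v_n\|_{\beta\,\cdot\,2\cdot2_\alpha^*/(2_\alpha^*-2)\cdots}
\]
(the precise exponent is routine), and iterating yields $\|v_n\|_\infty\le C$. The constants must be uniform in $n$. The main point is the uniform integrability of $|v_n|^{2_\alpha^*}$, which follows from the strong convergence $v_n\to v$ in $L^{2_\alpha^*}$. This lets us choose the truncation level $K$ with $\int_{\{v_n>K\}}v_n^{2_\alpha^*}$ small for all $n$ at once, and this is where Lemma~\ref{lem4.4} is essential.

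\textbf{Uniform decay.} Once $\|v_n\|_\infty\le C$ is known, the right-hand side $h_n:=g(v_n)\le C v_n$ is bounded in $L^\infty$. Moreover $h_n\to g(v)$ in $L^2$, and in fact in $L^r$ for every $r\ge2$ by interpolation with the $L^\infty$ bound. Let $\mathcal K$ be the Bessel kernel of $(-\Delta)^\alpha+(V_0+1)$. The comparison principle gives
\[
0<v_n\le \mathcal K*h_n .
\]
The hypothesis $\alpha>\tfrac34$ enters here: it ensures $\mathcal K\in L^2(\R^3)$, as noted in the Introduction. Since $h_n\to g(v)$ in $L^2$, we get $\mathcal K*h_n\to\mathcal K*g(v)$ uniformly on $\R^3$ (Cauchy--Schwarz). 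Also $\mathcal K*g(v)$ is a convolution of two $L^2$ functions, hence continuous and vanishing at infinity. Given $\eta>0$, we pick $n_0$ with $\|\mathcal K*h_n-\mathcal K*g(v)\|_\infty<\eta$ for $n\ge n_0$, and then $R$ so that $\mathcal K*g(v)<\eta$ and $\mathcal K*h_j<\eta$ on $|x|\ge R$ for the finitely many $j<n_0$. This gives $v_n(x)<2\eta$ for $|x|\ge R$ and all $n$, which is the uniform decay.

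\textbf{Main obstacle.} I expect the hardest step to be justifying the comparison $v_n\le\mathcal K*h_n$ rigorously. One must check that $w_n:=\mathcal K*h_n-v_n$ satisfies $(-\Delta)^\alpha w_n+(V_0+1)w_n\ge0$ weakly in $H^\alpha$. Testing with $w_n^-$ and using
\[
\int(-\Delta)^{\alpha/2}w\,(-\Delta)^{\alpha/2}w^-\le-\|w^-\|_{\dot H^{\alpha/2}\text{-part}}^2
\]
then forces $w_n^-=0$. A second delicate point is making the Moser constants independent of $n$, which again reduces to the uniform integrability coming from Lemma~\ref{lem4.4}.
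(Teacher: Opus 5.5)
Your proposal is correct and follows essentially the paper's route: a uniform fractional Moser iteration from $(-\Delta)^\alpha v_n\le C(v_n+v_n^{p-1})$, then the comparison $0\le v_n\le\mathcal K*h_n$ with the Bessel kernel of $(-\Delta)^\alpha+\mu$. There $\mathcal K\in L^2(\R^3)$ because $\alpha>\frac34$, the convergence $h_n\to h$ in $L^2$ gives uniform convergence of $\mathcal K*h_n$ to a $C_0$ function, and the finitely many remaining $n$ are handled separately. The differences are cosmetic: you use the mass $V_0+1$ and $g=F_2'+t^{p-1}$, whereas the paper uses $\mu=\frac{V_0+1}{2}$ and $G=(F_2'-F_1'+t^{p-1})^+$; also, since $p<2_\alpha^*$, mere boundedness of $v_n$ in $L^{2_\alpha^*}$ already makes the Moser constants uniform, so the strong convergence from Lemma~\ref{lem4.4} is more than you need at that step.
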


\begin{proof}
By Lemma~\ref{lem4.4},
\[
v_n\to v
\qquad \text{strongly in }X,
\]
where \(v\) is a ground state of the limit problem. In particular, \(\{v_n\}\) is bounded in \(H^\alpha(\R^3)\), and hence
\[
\|v_n\|_{L^r(\R^3)}\le C
\qquad \forall\, r\in[2,2_\alpha^*].
\]

Since each \(u_n\) is a critical point of \(J_{\varepsilon_n}\), the translated function \(v_n\) solves
\[
(-\Delta)^{\alpha}v_n+\bigl(V(\varepsilon_n x+y_n)+1\bigr)v_n+\phi_{v_n}^{\alpha}v_n
=
F_2'(v_n)-F_1'(v_n)+v_n^{p-1}
\qquad \text{in }\R^3.
\]
Because \(v_n\ge0\), \(F_1'(v_n)\ge0\), \(V(\varepsilon_n x+y_n)+1\ge V_0+1>0\), and \(\phi_{v_n}^\alpha\ge0\), we have
\[
(-\Delta)^\alpha v_n
\le
F_2'(v_n)+v_n^{p-1}
\qquad \text{in }\R^3.
\]
By \((f_2)\), there exists \(C>0\) such that
\[
F_2'(t)\le C\bigl(t+t^{p-1}\bigr),
\qquad \forall\, t\ge0,
\]
and hence
\[
(-\Delta)^\alpha v_n
\le
C\bigl(v_n+v_n^{p-1}\bigr)
\qquad \text{in }\R^3.
\]
Since \(p<2_\alpha^*\), the standard fractional Moser iteration gives
\[
\|v_n\|_{L^\infty(\R^3)}\le C
\qquad \forall\, n.
\]

Set
\[
\mu:=\frac{V_0+1}{2}>0,
\qquad
G(t):=\bigl(F_2'(t)-F_1'(t)+t^{p-1}\bigr)^+,
\qquad t\ge0.
\]
Since \(\{v_n\}\) is uniformly bounded in \(L^\infty(\R^3)\), there exists \(M>0\) such that
\[
0\le v_n(x)\le M
\qquad \forall\, x\in\R^3,\ \forall\, n.
\]
Moreover, by the explicit formulas for \(F_1'\) and \(F_2'\), there exists \(\tau\in(0,\delta_0)\) such that
\[
F_2'(t)-F_1'(t)+t^{p-1}<0
\qquad \forall\, t\in(0,\tau].
\]
Thus \(G(t)=0\) on \([0,\tau]\). Since \(G\in C^1([\tau,M])\), it follows that \(G\) is Lipschitz on \([0,M]\). In particular, because \(G(0)=0\), there exists \(L>0\) such that
\[
|G(s)-G(t)|\le L|s-t|,
\qquad
|G(t)|\le Lt,
\qquad \forall\, s,t\in[0,M].
\]

Define
\[
h_n(x):=G(v_n(x)),
\qquad
h(x):=G(v(x)).
\]
Since \(v_n\to v\) in \(L^2(\R^3)\) and \(G\) is Lipschitz on \([0,M]\), we have
\[
h_n\to h
\qquad \text{in }L^2(\R^3).
\]

From the equation for \(v_n\) and the choice of \(\mu\), we obtain
\[
(-\Delta)^\alpha v_n+\mu v_n
=
F_2'(v_n)-F_1'(v_n)+v_n^{p-1}
-\bigl(V(\varepsilon_n x+y_n)+1-\mu\bigr)v_n
-\phi_{v_n}^\alpha v_n
\le h_n
\]
in \(\R^3\), because \(V(\varepsilon_n x+y_n)+1-\mu\ge\mu>0\) and \(\phi_{v_n}^\alpha\ge0\).

Let \(\mathcal K_\mu\) be the Bessel kernel associated with \((-\Delta)^\alpha+\mu\), and define
\[
z_n:=\mathcal K_\mu*h_n,
\qquad
z:=\mathcal K_\mu*h.
\]
Then
\[
(-\Delta)^\alpha z_n+\mu z_n=h_n
\qquad \text{in }\R^3,
\]
and therefore
\[
(-\Delta)^\alpha(z_n-v_n)+\mu(z_n-v_n)\ge0
\qquad \text{in }\R^3.
\]
Testing with \((z_n-v_n)^-\), we obtain
\[
0\le v_n\le z_n
\qquad \text{in }\R^3.
\]

Since \(\alpha>\frac34\), we have \(\mathcal K_\mu\in L^2(\R^3)\). Hence Young's inequality gives
\[
\|z_n-z\|_{L^\infty(\R^3)}
\le
\|\mathcal K_\mu\|_{L^2(\R^3)}\|h_n-h\|_{L^2(\R^3)}
\to0.
\]
Moreover, \(\mathcal K_\mu\in L^2(\R^3)\) and \(h\in L^2(\R^3)\), so \(z=\mathcal K_\mu*h\in C_0(\R^3)\). Therefore
\[
z(x)\to0
\qquad \text{as }|x|\to\infty.
\]

Fix \(\eta>0\). Choose \(R>0\) such that
\[
|z(x)|<\frac{\eta}{2}
\qquad \forall\, |x|\ge R.
\]
Since \(z_n\to z\) in \(L^\infty(\R^3)\), we have
\[
z_n(x)<\eta
\qquad \forall\, |x|\ge R,\ \forall\, n \text{ large}.
\]
Enlarging \(R\) if necessary, the same estimate holds for the finitely many remaining \(n\). Hence
\[
\sup_n\sup_{|x|\ge R}z_n(x)\le\eta.
\]
Since \(0\le v_n\le z_n\), it follows that
\[
\sup_n\sup_{|x|\ge R}v_n(x)\le\eta.
\]
Therefore
\[
v_n(x)\to0
\qquad \text{as }|x|\to\infty
\]
uniformly in \(n\).
\end{proof}

\section{Multiplicity and proof of the main theorem}

\subsection{Low-energy compactness on the Nehari manifold}

In this subsection we study the low-energy compactness of the constrained functional
\[
\left.J_{\varepsilon}\right|_{\mathcal N_{\varepsilon}}.
\]

Recall that
\[
\mathcal H_{\varepsilon}(u):=\langle J_{\varepsilon}'(u),u\rangle,
\qquad
\mathcal N_{\varepsilon}
=
\left\{
u\in X_{\varepsilon}\setminus\{0\}:\mathcal H_{\varepsilon}(u)=0
\right\}.
\]
By Lemma~\ref{lem3.4}, a point \(u\in\mathcal N_\varepsilon\) is a constrained critical point of \(\left.J_\varepsilon\right|_{\mathcal N_\varepsilon}\) if
\[
\|J_{\varepsilon}'(u)\|_{*}
:=
\min_{\lambda\in\R}
\bigl\|J_{\varepsilon}'(u)-\lambda \mathcal H_{\varepsilon}'(u)\bigr\|_{X_{\varepsilon}^{*}}
=0.
\]
Accordingly, a \((PS)_c\)-sequence for \(\left.J_\varepsilon\right|_{\mathcal N_\varepsilon}\) is a sequence \(\{u_n\}\subset\mathcal N_\varepsilon\) such that
\[
J_\varepsilon(u_n)\to c
\qquad\text{and}\qquad
\|J_{\varepsilon}'(u_n)\|_{*}\to0.
\]

\begin{lemma}\label{lem5.1}
Let \(u\in\mathcal N_{\varepsilon}\) be a constrained critical point of \(\left.J_{\varepsilon}\right|_{\mathcal N_{\varepsilon}}\). Then \(u\) is a critical point of \(J_{\varepsilon}\) in \(X_{\varepsilon}\).
\end{lemma}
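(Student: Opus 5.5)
The plan is the standard Lagrange multiplier argument. If \(u\in\mathcal N_\varepsilon\) is a constrained critical point, then by the definition of \(\|\cdot\|_*\) (the minimum over \(\lambda\) is attained) there exists \(\lambda\in\R\) with
\[
J_\varepsilon'(u)=\lambda\,\mathcal H_\varepsilon'(u)\qquad\text{in }X_\varepsilon^*.
\]
This is legitimate because \(\mathcal H_\varepsilon\in C^1(X_\varepsilon,\R)\) by Lemma~\ref{lem2.3}, and \(\mathcal N_\varepsilon\) is a \(C^1\)-manifold of codimension one by Lemma~\ref{lem3.4}. It therefore suffices to show that \(\lambda=0\).

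To get \(\lambda=0\), I will test the identity against \(u\) itself, which lies in \(X_\varepsilon\). The left-hand side gives \(\langle J_\varepsilon'(u),u\rangle=\mathcal H_\varepsilon(u)=0\), since \(u\in\mathcal N_\varepsilon\). Hence
\[
\lambda\,\langle\mathcal H_\varepsilon'(u),u\rangle=0.
\]
In the proof of Lemma~\ref{lem3.4} we showed, via the fibering map \(\Psi(t)=h'(t)/t\), that
\[
\langle\mathcal H_\varepsilon'(u),u\rangle=\Psi'(1)<0\qquad\text{for every }u\in\mathcal N_\varepsilon.
\]
This forces \(\lambda=0\), so \(J_\varepsilon'(u)=0\) in \(X_\varepsilon^*\), which is the claim.

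The only delicate point is justifying the strict inequality \(\Psi'(1)<0\). The map \(\Psi\) is built from terms \(\int\frac{F_i'(tu)}{tu}u^2\,dx\) with \(F_2\) only \(C^1\), so \(\Psi\) is not obviously differentiable, and strict monotonicity alone does not give a strictly negative derivative. I would bypass this by computing \(\langle\mathcal H_\varepsilon'(u),u\rangle\) directly from Lemma~\ref{lem2.3}, using \(G_1,G_2\in C^1\). The computation gives
\[
\langle\mathcal H_\varepsilon'(u),u\rangle=2\|u\|_{H_\varepsilon}^2+4\int_{\R^3}\phi_u^\alpha u^2\,dx+\int_{\R^3}G_1'(u)u\,dx-\int_{\R^3}G_2'(u)u\,dx-p\int_{\R^3}|u|^p\,dx.
\]
Subtracting twice the Nehari identity \(\mathcal H_\varepsilon(u)=0\) then yields
\[
2\int_{\R^3}\phi_u^\alpha u^2\,dx+\int_{\R^3}\bigl(G_1'(u)u-2G_1(u)\bigr)\,dx-\int_{\R^3}\bigl(G_2'(u)u-2G_2(u)\bigr)\,dx-(p-2)\int_{\R^3}|u|^p\,dx.
\]
The monotonicity in \((f_3)\), and the fact that \(F_1'(s)/s\) is nonincreasing, give pointwise \(G_1'(s)s-2G_1(s)\le0\) and \(G_2'(s)s-2G_2(s)\ge0\), because these quantities equal \(s^3\frac{d}{ds}\bigl(F_i'(s)/s\bigr)\) where defined. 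Hence the whole expression is at most
\[
2\int_{\R^3}\phi_u^\alpha u^2\,dx-(p-2)\int_{\R^3}|u|^p\,dx.
\]
That bound alone is not enough, so I would use the Nehari identity a second time, exactly as in Lemma~\ref{lem4.1}. Writing \(\langle\mathcal H_\varepsilon'(u),u\rangle-4\mathcal H_\varepsilon(u)\) instead, the Poisson term is eliminated, and using \(p>4\) the expression is bounded by
\[
-2\|u\|_{H_\varepsilon}^2-(p-4)\int_{\R^3}|u|^p\,dx<0,
\]
after checking the sign of the \(G_i'(u)u-4G_i(u)\) terms. This sign check is the step I expect to need the most care. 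I would reduce it to the elementary one-variable inequalities \(G_1'(s)s\le 2G_1(s)\) and \(G_2'(s)s\ge 2G_2(s)\), and I would also allow a contribution \(-2\int G_1(u)\,dx\le0\) to absorb the remainder.
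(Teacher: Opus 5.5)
Your Lagrange-multiplier argument (test \(J_\varepsilon'(u)=\lambda\mathcal H_\varepsilon'(u)\) against \(u\), use \(\mathcal H_\varepsilon(u)=0\), conclude \(\lambda=0\)) is exactly the paper's proof. The paper simply quotes \(\langle\mathcal H_\varepsilon'(u),u\rangle<0\) from Lemma~\ref{lem3.4}. The gap is in the computation you substitute for that citation.

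Your bound \(\langle\mathcal H_\varepsilon'(u),u\rangle-4\mathcal H_\varepsilon(u)\le-2\|u\|_{H_\varepsilon}^2-(p-4)\int|u|^p\,dx\) needs \(G_2'(s)s-4G_2(s)\ge0\). This fails: for \(|s|\ge\delta_0\) one computes
\[
G_2'(s)s-4G_2(s)=-2s^2\log\frac{s^2}{\delta_0^2}+6s^2-6\delta_0|s|\longrightarrow-\infty .
\]
In fact \(G_1(s)-G_2(s)=-(s^2\log s^2+s^2)\), so the \(G\)-terms combine exactly. For \(u\in\mathcal N_\varepsilon\) this gives
\[
\langle\mathcal H_\varepsilon'(u),u\rangle=-2\|u\|_{H_\varepsilon}^2+2\int_{\R^3}u^2\log u^2\,dx-(p-4)\int_{\R^3}|u|^p\,dx ,
\]
which agrees with the formula \(2\int\phi_u^\alpha u^2-2\int u^2-(p-2)\int|u|^p\) in Lemma~\ref{lem5.3}. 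The middle term is positive wherever \(|u|>1\), and nothing in your argument controls it. Reducing to \(G_1'(s)s\le2G_1(s)\) and \(G_2'(s)s\ge2G_2(s)\) leaves exactly \(2\int(G_2-G_1)(u)\,dx=2\int(u^2\log u^2+u^2)\,dx\). The term \(-2\int G_1(u)\,dx\) cannot absorb this.

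You are right not to trust Lemma~\ref{lem3.4}, and the defect is worse than the differentiability issue you spotted. The Poisson term enters \(h'(t)/t\) as \(+t^2\int\phi_u^\alpha u^2\,dx\); Lemmas~\ref{lem3.3} and \ref{lem3.4} write it with the wrong sign, so \(\Psi\) need not be monotone. Concretely, take \(u=A\omega(\cdot/R)\) with \(R\) tied to \(A\) so that \(2\int\phi_u^\alpha u^2=4\int u^2+(p-2)\int|u|^p\). Once \(p-4\) is small (depending on \(\sup V\) and \(\omega\)), the intermediate value theorem gives an \(A\) solving the Nehari identity. This produces \(u\in\mathcal N_\varepsilon\) with \(\langle\mathcal H_\varepsilon'(u),u\rangle=2\int u^2>0\), so strict negativity on all of \(\mathcal N_\varepsilon\) cannot hold in general.

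Two repairs are available.
\begin{itemize}
\item[(a)] Use the pointwise bound \(2s^2\log s^2-(p-4)|s|^p\le C_ps^2\) with \(C_p=\frac{4}{p-2}\bigl(\log\frac{4}{(p-4)(p-2)}-1\bigr)\). Your identity then gives \(\langle\mathcal H_\varepsilon'(u),u\rangle\le-2\|(-\Delta)^{\alpha/2}u\|_2^2-\bigl(2(V_0+1)-C_p\bigr)\|u\|_2^2<0\), so your \(-4\mathcal H_\varepsilon\) strategy works under the extra hypothesis \(C_p\le2(V_0+1)\).
\item[(b)] Without that hypothesis, state Lemma~\ref{lem5.1} only where \(\langle\mathcal H_\varepsilon'(u),u\rangle\neq0\) is known. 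An example is the sublevel \(\{J_\varepsilon\le c_0+\nu\}\) of Lemma~\ref{lem5.3}, which is all that Lemma~\ref{lem5.13} uses. The real work then becomes proving \(\langle\mathcal H_0'(v),v\rangle<0\) at limit ground states \(v\), which the paper again takes from Lemma~\ref{lem3.4}.
\end{itemize}
The Lagrange step itself only needs \(\langle\mathcal H_\varepsilon'(u),u\rangle\neq0\), not a particular sign.
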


\begin{proof}
There exists \(\lambda\in\R\) such that
\[
J_{\varepsilon}'(u)=\lambda \mathcal H_{\varepsilon}'(u).
\]
Testing against \(u\), we obtain
\[
0=\langle J_{\varepsilon}'(u),u\rangle
=\lambda \langle \mathcal H_{\varepsilon}'(u),u\rangle.
\]
By Lemma~\ref{lem3.4},
\[
\langle \mathcal H_{\varepsilon}'(u),u\rangle<0.
\]
Hence \(\lambda=0\), and therefore \(J_{\varepsilon}'(u)=0\).
\end{proof}

\begin{lemma}\label{lem5.2}
There exist \(\nu>0\) and \(\varepsilon_{*}>0\) such that, for every
\[
\varepsilon\in(0,\varepsilon_{*}),
\]
the functional \(J_{\varepsilon}\) satisfies the \((PS)_{c}\) condition for every
\[
c\in[c_{\varepsilon},\,c_{0}+\nu].
\]
\end{lemma}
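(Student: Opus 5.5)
The plan is to reduce the statement to the compactness argument already carried out in Lemma~\ref{lem3.7} and Lemma~\ref{lem3.8}. Those proofs used the hypothesis $c_\varepsilon<c_\infty$ only in one place: to rule out profiles escaping to infinity, through the energy inequality $c\ge J_\varepsilon(u_\varepsilon)+c_\infty$. We therefore fix
\[
\nu:=\tfrac12\,(c_\infty-c_0)>0,
\]
which is legitimate by Lemma~\ref{lem4.2}. By Lemma~\ref{lem3.6}(iii),(iv), there is $\varepsilon_*>0$ such that $c_\varepsilon<c_0+\nu<c_\infty$ for all $\varepsilon\in(0,\varepsilon_*)$, so the interval $[c_\varepsilon,c_0+\nu]$ is nonempty and lies strictly below $c_\infty$. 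We interpret the $(PS)_c$ condition both for the free functional and for the constrained one. A constrained $(PS)_c$ sequence on $\mathcal N_\varepsilon$ is first converted into a free one, exactly as in Lemma~\ref{lem4.1}. From $J_\varepsilon'(u_n)-\lambda_n\mathcal H_\varepsilon'(u_n)=o(1)$, test against $u_n$ and use $\mathcal H_\varepsilon(u_n)=0$. The uniform bound
\[
\langle\mathcal H_\varepsilon'(u),u\rangle\le -(p-2)\|u\|_p^p\le -\sigma<0 \quad\text{on }\mathcal N_\varepsilon
\]
follows from Lemma~\ref{lem3.1} together with $\|u\|_{H_\varepsilon}^2\le C\|u\|_p^p$, and $\sigma$ is independent of $\varepsilon$. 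Together with boundedness of $\mathcal H_\varepsilon'(u_n)$ on bounded sets, this gives $\lambda_n\to0$, hence $J_\varepsilon'(u_n)\to0$.

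Next, let $\{u_n\}$ be a free $(PS)_c$ sequence with $c\in[c_\varepsilon,c_0+\nu]$. First show $\{u_n\}$ is bounded in $X_\varepsilon$. Since $p>4$, the quantity
\[
J_\varepsilon(u_n)-\tfrac14\langle J_\varepsilon'(u_n),u_n\rangle
=\tfrac14\|u_n\|_{H_\varepsilon}^2+\tfrac14\int u_n^2+\Big(\tfrac1 4-\tfrac1p\Big)\|u_n\|_p^p
\]
controls $\|u_n\|_{H_\varepsilon}$. Here we use $F_1-F_2-\frac14(G_1-G_2)=\frac14 s^2$, a direct computation from $F_2-F_1=\frac12 s^2\log s^2$. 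The $F_1$-modular is then controlled through the Nehari-type identity together with $lF_1\le G_1$ and $(f_2)$. This bound is uniform, which also matters for $\sigma$ above.

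Then rerun the proof of Lemma~\ref{lem3.7} with $c$ in place of $c_\varepsilon$. Vanishing is impossible because it would force $u_n\to0$ in $X_\varepsilon$ and $c=0$, contradicting $c\ge c_\varepsilon\ge\eta_0$. Extract a weak limit $u_\varepsilon$, which is a critical point, and perform the profile decomposition at infinity with limit profiles that are critical points of $J_\infty$. If some profile $w^1\ne0$ appears, we get
\[
c\ge J_\varepsilon(u_\varepsilon)+c_\infty .
\]
Here $J_\varepsilon(u_\varepsilon)\ge0$: either $u_\varepsilon=0$, or $u_\varepsilon\in\mathcal N_\varepsilon$, in which case $J_\varepsilon(u_\varepsilon)\ge c_\varepsilon>0$. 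So $c\ge c_\infty>c_0+\nu$, a contradiction. Hence $u_n\to u_\varepsilon$ in $L^r$ for $r\in[2,2_\alpha^*)$.

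Finally, upgrade to strong convergence in $X_\varepsilon$ exactly as in Lemma~\ref{lem3.8}. Monotonicity of $F_1'$ (convexity) and strong $L^p$ convergence give convergence in $H_\varepsilon$, and the Poisson term is handled by Lemma~\ref{lem2.5}. For the Orlicz part, use the modular/Vitali argument with $F_1\in(\Delta_2)$. Vitali needs uniform integrability of $F_1(u_n-u_\varepsilon)$, and a.e. convergence with boundedness alone is not quite enough. I will get it from the $L^2$ convergence instead: $F_1(s)\le C(s^2+|s|^{2-\theta})$ locally, and the tails are controlled by $\int G_1(u_n)$ converging via the Nehari identity.

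I expect this last point to be the main obstacle. Convergence of $\int G_1(u_n)\to\int G_1(u_\varepsilon)$ follows from $\langle J_\varepsilon'(u_n),u_n\rangle\to0=\langle J_\varepsilon'(u_\varepsilon),u_\varepsilon\rangle$, since all other terms already converge. Combined with the Brézis–Lieb splitting of Lemma~\ref{lem2.4}, this yields $\int G_1(u_n-u_\varepsilon)\to0$, hence $\int F_1(u_n-u_\varepsilon)\to0$ by $lF_1\le G_1$. That closes the argument.
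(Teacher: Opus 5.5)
Your overall route is the paper's. You choose \(\nu\) with \(c_0+\nu<c_\infty\), rerun the profile extraction of Lemma~\ref{lem3.7} to get \(c\ge J_\varepsilon(u)+c_\infty\), and finish with Lemma~\ref{lem3.8}. Your choice \(\nu=\frac12(c_\infty-c_0)\) is fine, since the paper's extra requirement \(c_0+\nu<2c_\varepsilon\) is never used.

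The step that fails is the boundedness argument you add; the paper only invokes ``standard arguments'' there. The identity \(F_1-F_2-\frac14(G_1-G_2)=\frac14 s^2\) is wrong. Since \(F_1-F_2=-\frac12 s^2\log s^2\) and \(G_1-G_2=-s^2\log s^2-s^2\), one gets \(\frac14 s^2-\frac14 s^2\log s^2\), so
\[
J_\varepsilon(u)-\tfrac14\langle J_\varepsilon'(u),u\rangle
=\tfrac14\|u\|_{H_\varepsilon}^2+\tfrac14\|u\|_2^2-\tfrac14\int_{\R^3}u^2\log u^2\,dx+\Bigl(\tfrac14-\tfrac1p\Bigr)\|u\|_p^p .
\]
This is not coercive. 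For \(p\) close to \(4\), which is admissible for every \(\alpha\), the density \(\frac14(V_\infty+2)s^2-\frac14 s^2\log s^2+(\frac14-\frac1p)|s|^p\) is negative for suitable \(s>1\). Evaluate on wide smoothed plateaus of that height placed far out, where \(V(\varepsilon x)\approx V_\infty\) and the kinetic part only grows like the surface area. This shows the right-hand side is unbounded below on \(X_\varepsilon\).

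The clean identity is the one with coefficient \(\frac12\). But there the \(H_\varepsilon\)-norm drops out and the Poisson term appears as \(-\frac14\int_{\R^3}\phi_u^\alpha u^2\,dx\). Everything downstream needs bounded \((PS)_c\)-sequences: the weak limit, the splittings of Lemma~\ref{lem2.4}, the finiteness of the profile decomposition, and your final \(G_1\)-argument. So this step needs a genuine estimate of \(\int_{\{|u_n|>1\}}u_n^2\log u_n^2\,dx\) that also uses the Poisson and \(L^p\) terms.

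Your constrained-functional detour is not needed, because the statement concerns the free functional \(J_\varepsilon\). It also rests on a false bound. For \(u\in\mathcal N_\varepsilon\), the fibering computation in the proof of Lemma~\ref{lem5.3} gives
\[
\langle\mathcal H_\varepsilon'(u),u\rangle=2\int_{\R^3}\phi_u^\alpha u^2\,dx-2\int_{\R^3}u^2\,dx-(p-2)\int_{\R^3}|u|^p\,dx .
\]
The Poisson term enters with a plus sign. So \(\langle\mathcal H_\varepsilon'(u),u\rangle\le-(p-2)\|u\|_p^p\) would require \(\int_{\R^3}\phi_u^\alpha u^2\,dx\le\int_{\R^3}u^2\,dx\), which nothing guarantees on the unbounded manifold \(\mathcal N_\varepsilon\). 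This is why the paper obtains a negative bound only on the sublevel set \(\{J_\varepsilon\le c_0+\nu\}\), arguing by contradiction via Lemma~\ref{lem4.4}.

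Your last step is a correct repair of the paper's Vitali argument, which does not follow from boundedness in \(L^{F_1}\) and a.e.\ convergence alone. You get \(\int_{\R^3}G_1(u_n)\,dx\to\int_{\R^3}G_1(u_\varepsilon)\,dx\) from the Nehari-type identity, then apply the Br\'ezis--Lieb splitting for \(G_1\) together with \(lF_1\le G_1\). This step too, however, presupposes bounded sequences.
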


\begin{proof}
By Lemma~\ref{lem4.2},
\[
c_0<c_{\infty}.
\]
Choose \(\nu>0\) such that
\[
c_0+\nu<\min\{2c_0,c_{\infty}\}.
\]
Since \(c_{\varepsilon}\to c_0\) as \(\varepsilon\to0\), there exists \(\varepsilon_{*}>0\) such that
\[
c_0+\nu<\min\{2c_{\varepsilon},c_{\infty}\}
\qquad \forall\, \varepsilon\in(0,\varepsilon_{*}).
\]

Fix \(\varepsilon\in(0,\varepsilon_{*})\), let
\[
c\in[c_{\varepsilon},c_0+\nu],
\]
and let \(\{u_n\}\subset X_{\varepsilon}\) be a \((PS)_c\)-sequence for \(J_\varepsilon\). By standard arguments, \(\{u_n\}\) is bounded in \(X_\varepsilon\). Up to a subsequence, there exists \(u\in X_\varepsilon\) such that
\[
u_n\rightharpoonup u
\qquad\text{in }X_{\varepsilon},
\]
\[
u_n\to u
\qquad\text{in }L_{\mathrm{loc}}^{r}(\R^{3}),
\qquad \forall\, r\in[1,2_{\alpha}^{*}),
\]
and
\[
u_n(x)\to u(x)
\qquad\text{a.e. in }\R^{3}.
\]
Passing to the limit in \(J_{\varepsilon}'(u_n)\to0\), we obtain
\[
J_{\varepsilon}'(u)=0
\qquad\text{in }X_{\varepsilon}^{*}.
\]

We claim that
\[
u_n\to u
\qquad \text{in }L^{r}(\R^{3}),
\qquad \forall\, r\in[2,2_{\alpha}^{*}).
\]
If not, we may repeat the profile extraction argument used in Lemma~\ref{lem3.7}. Since \(u_n\to u\) locally in \(L^2\), any nontrivial remainder profile must occur at infinity and therefore solves the autonomous limit problem. Consequently one obtains
\[
c\ge J_{\varepsilon}(u)+c_{\infty}.
\]
If \(u=0\), this gives \(c\ge c_\infty\). If \(u\neq0\), then \(u\in\mathcal N_\varepsilon\), and Lemma~\ref{lem3.6}(ii) yields \(J_\varepsilon(u)\ge c_\varepsilon>0\), so again \(c\ge c_\infty\). Both alternatives contradict
\[
c\le c_0+\nu<c_{\infty}.
\]
Thus
\[
u_n\to u
\qquad \text{in }L^{r}(\R^{3}),
\qquad \forall\, r\in[2,2_{\alpha}^{*}).
\]

The proof of Lemma~\ref{lem3.8} now applies and yields
\[
u_n\to u
\qquad \text{strongly in }X_{\varepsilon}.
\]
Hence \(J_\varepsilon\) satisfies the \((PS)_c\) condition for every
\[
c\in[c_{\varepsilon},c_0+\nu].
\]
\end{proof}

\begin{lemma}\label{lem5.3}
There exist \(\nu>0\), \(\sigma>0\), and \(\varepsilon_{*}>0\) such that, for every
\[
\varepsilon\in(0,\varepsilon_{*}),
\qquad
u\in\mathcal N_{\varepsilon},
\qquad
J_{\varepsilon}(u)\le c_0+\nu,
\]
one has
\[
\langle \mathcal H_{\varepsilon}'(u),u\rangle\le -\sigma.
\]
\end{lemma}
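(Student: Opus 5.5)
The plan is to compute \(\langle \mathcal H_\varepsilon'(u),u\rangle\) explicitly along the fibering map and show it is dominated by \(-(p-2)\int|u|^p\). Then I bound \(\int|u|^p\) from below uniformly, which is exactly what was done in the proof of Lemma~\ref{lem4.1} in the autonomous case. The energy restriction \(J_\varepsilon(u)\le c_0+\nu\) should not really be needed, except to guarantee boundedness if one prefers that route. I expect the estimate to hold with any \(\nu>0\), \(\varepsilon_*\) as in Lemma~\ref{lem5.2}.

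\emph{Step 1 (fibering identity).} For \(u\in\mathcal N_\varepsilon\), set \(h(t)=J_\varepsilon(tu)\). As in the proof of Lemma~\ref{lem3.4}, \(\mathcal H_\varepsilon(tu)=t h'(t)\), so \(\langle\mathcal H_\varepsilon'(u),u\rangle=h'(1)+h''(1)=h''(1)\). Writing \(h'(t)/t=\Psi(t)\) as in Lemma~\ref{lem3.4}, we get \(h''(1)=\Psi'(1)\).

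\emph{Step 2 (differentiate \(\Psi\)).} The term \(\int \frac{F_1'(tu)}{tu}u^2\) is nonincreasing in \(t\), since \(s\mapsto F_1'(s)/s\) is nonincreasing in \(|s|\). The term \(-\int\frac{F_2'(tu)}{tu}u^2\) is nonincreasing by \((f_3)\). Their derivatives at \(t=1\) are therefore \(\le0\). Rigorously, one differentiates using \(G_1,G_2\in C^1\) from Lemma~\ref{lem2.3}. Concretely, one checks that \(G_1'(s)s-2G_1(s)\le0\) and \(G_2'(s)s-2G_2(s)\ge0\) pointwise; this holds because \(F_1'(s)/s=-(\log s^2+1)\) for \(|s|<\delta_0\) and is monotone beyond. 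Hence
\[
\langle \mathcal H_\varepsilon'(u),u\rangle\le -2\int_{\R^3}\phi_u^\alpha u^2\,dx-(p-2)\int_{\R^3}|u|^p\,dx\le-(p-2)\int_{\R^3}|u|^p\,dx .
\]

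\emph{Step 3 (uniform lower bound).} From the proof of Lemma~\ref{lem3.1}, every \(u\in\mathcal N_\varepsilon\) satisfies \(\|u\|_{H_\varepsilon}^2\le C\int|u|^p\) and \(\|u\|_{H_\varepsilon}\ge\beta\), with \(C,\beta\) independent of \(\varepsilon\). The uniformity comes from \(V(\varepsilon x)+1\ge V_0+1\) and Sobolev constants. Thus \(\int|u|^p\ge\beta^2/C\), and we may take \(\sigma=(p-2)\beta^2/C\). The values of \(\nu\) and \(\varepsilon_*\) can be taken from Lemma~\ref{lem5.2}, so that the statement is compatible with later use.

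The main obstacle is Step 2. One must justify differentiating the \(F_1,F_2\) integrals in \(t\), and verify the pointwise sign of \(\frac{d}{dt}\big|_{t=1}\frac{F_i'(tu)}{tu}\) across the junction \(|s|=\delta_0\), where \(F_i\) is only \(C^1\). My first attempt would avoid derivatives there. Monotonicity of \(t\mapsto\Psi(t)+t^2\int\phi_u^\alpha u^2+t^{p-2}\int|u|^p\) gives the one-sided difference-quotient bound directly. Since \(\Psi\) is \(C^1\) by Lemma~\ref{lem2.3}, that suffices to get \(\Psi'(1)\le-2\int\phi_u^\alpha u^2-(p-2)\int|u|^p\).
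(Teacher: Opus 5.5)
\textbf{There is a gap in Step 2: the Poisson term has the wrong sign.} Since $J_\varepsilon$ contains $+\frac14\int\phi_u^\alpha u^2$ and $\phi_{tu}^\alpha=t^2\phi_u^\alpha$, the correct fibering quotient is
\[
\Psi(t)=\frac{h'(t)}{t}=\|u\|_{H_\varepsilon}^2+t^2\int_{\R^3}\phi_u^\alpha u^2\,dx+\int_{\R^3}\frac{F_1'(tu)}{tu}u^2\,dx-\int_{\R^3}\frac{F_2'(tu)}{tu}u^2\,dx-t^{p-2}\int_{\R^3}|u|^p\,dx .
\]
So the quartic term is \emph{increasing} in $t$. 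The formula for $\Psi$ in the proof of Lemma~\ref{lem3.4}, and the inequality in the proof of Lemma~\ref{lem4.1} that you reproduce, carry the opposite sign. The paper's own proof of this lemma computes $h_u''(1)$ directly and gets the sign right.

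Your pointwise computations are correct, and they combine to $[G_1'(s)s-2G_1(s)]-[G_2'(s)s-2G_2(s)]=-2s^2$. Hence, for $u\in\mathcal N_\varepsilon$,
\[
\langle\mathcal H_\varepsilon'(u),u\rangle=2\int_{\R^3}\phi_u^\alpha u^2\,dx-2\int_{\R^3}u^2\,dx-(p-2)\int_{\R^3}|u|^p\,dx .
\]
Your claimed bound is therefore equivalent to $2\int\phi_u^\alpha u^2\le\int u^2$. Even your weaker conclusion $\le-(p-2)\int|u|^p$ requires $\int\phi_u^\alpha u^2\le\int u^2$. Neither holds on $\mathcal N_\varepsilon$ in general. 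For example, at Nehari points on rays of very spread-out profiles, $\int\phi_u^\alpha u^2$ is comparable to $\int|u|^p$ and much larger than $\int u^2$.

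Your fallback fails for the same reason. The function that is actually nonincreasing is $\Psi(t)-t^2\int\phi_u^\alpha u^2+t^{p-2}\int|u|^p$, and it only gives $\Psi'(1)\le 2\int\phi_u^\alpha u^2-(p-2)\int|u|^p$. Step 3 is correct, but it cannot absorb the positive quartic term. Your assertion that the energy restriction is superfluous is therefore unsupported.

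\textbf{The missing idea is to use $J_\varepsilon(u)\le c_0+\nu$ to control that term through compactness, as the paper does.} Argue by contradiction with $\varepsilon_n\to0$, $u_n\in\mathcal N_{\varepsilon_n}$, $J_{\varepsilon_n}(u_n)\le c_0+\frac1n$ and $\langle\mathcal H_{\varepsilon_n}'(u_n),u_n\rangle\to0$. Since $c_{\varepsilon_n}\le J_{\varepsilon_n}(u_n)$ and $c_{\varepsilon_n}\to c_0$, the energies converge to $c_0$. Lemma~\ref{lem4.4} then gives translates $v_n\to v$ strongly in $X$, with $v$ a ground state. The right-hand side of the identity above is translation invariant and continuous under strong convergence (Lemma~\ref{lem2.5}(vi)). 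It therefore tends to $\langle\mathcal H_0'(v),v\rangle$, and strict negativity there yields the contradiction.

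This reduces the sign question from all of $\mathcal N_\varepsilon$ to a single limiting function. Be aware, however, that the paper justifies $\langle\mathcal H_0'(v),v\rangle<0$ by the autonomous analogue of Lemma~\ref{lem3.4}. That lemma's monotonicity argument contains the same sign slip, so on this route the final inequality also needs an independent justification.
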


\begin{proof}
For \(u\in\mathcal N_{\varepsilon}\), set
\[
h_u(t):=J_{\varepsilon}(tu),
\qquad t\ge0.
\]
Since \(u\in\mathcal N_{\varepsilon}\),
\[
h_u'(1)=\langle J_{\varepsilon}'(u),u\rangle=0.
\]
Using
\[
F_1(s)-F_2(s)=-\frac12 s^2\log s^2,
\]
we may write
\[
\begin{aligned}
h_u(t)
&=
\frac{t^2}{2}\|u\|_{H_{\varepsilon}}^2
+\frac{t^4}{4}\int_{\R^3}\phi_u^\alpha u^2\,dx
-\frac{t^2}{2}\int_{\R^3}u^2\log u^2\,dx \\
&\quad
-\frac{t^2}{2}\log t^2\int_{\R^3}u^2\,dx
-\frac{t^p}{p}\int_{\R^3}|u|^p\,dx.
\end{aligned}
\]
Differentiating twice at \(t=1\), we obtain
\[
\langle \mathcal H_{\varepsilon}'(u),u\rangle
=
h_u''(1)
=
2\int_{\R^3}\phi_u^\alpha u^2\,dx
-2\int_{\R^3}|u|^2\,dx
+(2-p)\int_{\R^3}|u|^p\,dx.
\]

Assume by contradiction that the conclusion fails. Then there exist
\[
\varepsilon_n\to0^+,
\qquad
u_n\in\mathcal N_{\varepsilon_n},
\qquad
J_{\varepsilon_n}(u_n)\le c_0+\frac1n,
\]
such that
\[
\langle \mathcal H_{\varepsilon_n}'(u_n),u_n\rangle\to0.
\]
Since
\[
c_{\varepsilon_n}\le J_{\varepsilon_n}(u_n)\le c_0+\frac1n
\]
and \(c_{\varepsilon_n}\to c_0\), it follows that
\[
J_{\varepsilon_n}(u_n)\to c_0.
\]
By Lemma~\ref{lem4.4}, there exists \(\{\widetilde y_n\}\subset\R^3\) such that
\[
v_n(x):=u_n(x+\widetilde y_n)\to v
\qquad \text{strongly in }X,
\]
where \(v\neq0\) is a ground state solution of the limit problem.

Since
\[
2\int_{\R^3}\phi_u^\alpha u^2\,dx
-2\int_{\R^3}|u|^2\,dx
+(2-p)\int_{\R^3}|u|^p\,dx
\]
is translation invariant,
\[
\langle \mathcal H_{\varepsilon_n}'(u_n),u_n\rangle
=
2\int_{\R^3}\phi_{v_n}^\alpha v_n^2\,dx
-2\int_{\R^3}|v_n|^2\,dx
+(2-p)\int_{\R^3}|v_n|^p\,dx.
\]
Passing to the limit and using Lemma~\ref{lem2.5}(vi), we obtain
\[
\langle \mathcal H_{\varepsilon_n}'(u_n),u_n\rangle
\to
\langle \mathcal H_0'(v),v\rangle.
\]
Since \(v\in\mathcal N_0\), the autonomous analogue of Lemma~\ref{lem3.4} gives
\[
\langle \mathcal H_0'(v),v\rangle<0,
\]
a contradiction.
\end{proof}

\begin{lemma}\label{lem5.4}
Assume that there exist \(d>c_{\varepsilon}\) and \(\sigma>0\) such that
\[
\langle \mathcal H_{\varepsilon}'(u),u\rangle\le -\sigma
\]
for every \(u\in\mathcal N_{\varepsilon}\) satisfying
\[
J_{\varepsilon}(u)\le d.
\]
Assume moreover that \(J_{\varepsilon}\) satisfies the \((PS)_{c}\) condition for every
\[
c\in[c_{\varepsilon},d].
\]
Then the constrained functional
\[
\left.J_{\varepsilon}\right|_{\mathcal N_{\varepsilon}}
\]
satisfies the \((PS)_{c}\) condition for every
\[
c\in[c_{\varepsilon},d].
\]
\end{lemma}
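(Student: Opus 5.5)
The plan is to show that a constrained Palais--Smale sequence is in fact a free Palais--Smale sequence for \(J_\varepsilon\), and then to invoke the assumed \((PS)_c\) condition. Let \(c\in[c_\varepsilon,d]\) and let \(\{u_n\}\subset\mathcal N_\varepsilon\) satisfy \(J_\varepsilon(u_n)\to c\) and \(\|J_\varepsilon'(u_n)\|_*\to0\). By definition of \(\|\cdot\|_*\), there are multipliers \(\lambda_n\in\R\) with
\[
J_\varepsilon'(u_n)-\lambda_n\mathcal H_\varepsilon'(u_n)=o_n(1)\qquad\text{in }X_\varepsilon^*.
\]
Discarding finitely many terms, we may assume \(J_\varepsilon(u_n)\le d+1\). (If \(c=d\), we apply the hypothesis with the obvious care, or note that \(J_\varepsilon(u_n)\le d+o(1)\). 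If the hypothesis is needed strictly up to \(d\) only, it suffices to slightly enlarge \(d\) in the application, since Lemma~\ref{lem5.3} gives the bound up to \(c_0+\nu\) anyway.)

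First I will prove that \(\{u_n\}\) is bounded in \(X_\varepsilon\). From \(\mathcal H_\varepsilon(u_n)=0\) we have
\[
J_\varepsilon(u_n)=J_\varepsilon(u_n)-\tfrac14\langle J_\varepsilon'(u_n),u_n\rangle.
\]
Writing \(F_1-F_2=-\frac12 s^2\log s^2\) and using that \(\frac12s^2\log s^2-\frac14(s^2\log s^2+s^2)\) combines with the other terms, this becomes
\[
J_\varepsilon(u_n)=\tfrac14\|u_n\|_{H_\varepsilon}^2+\tfrac14\|u_n\|_2^2+\bigl(\tfrac14-\tfrac1p\bigr)\|u_n\|_p^p .
\]
Hence \(\|u_n\|_{H_\varepsilon}\) and \(\|u_n\|_p\) are bounded. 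To bound \(\int F_1(u_n)\) I will use the Nehari identity: \(\int G_1(u_n)\le \int G_2(u_n)+\|u_n\|_p^p\le C\), and \(lF_1\le G_1\) by Lemma~\ref{lem2.3}(i). Then \eqref{eq2.2} bounds \(\|u_n\|_{F_1}\).

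Next I will show \(\lambda_n\to0\). Testing the multiplier relation against \(u_n\) (bounded) gives
\[
\langle J_\varepsilon'(u_n),u_n\rangle-\lambda_n\langle\mathcal H_\varepsilon'(u_n),u_n\rangle=o_n(1).
\]
Since the first term is zero on \(\mathcal N_\varepsilon\) and \(\langle\mathcal H_\varepsilon'(u_n),u_n\rangle\le-\sigma\) by hypothesis (valid since \(J_\varepsilon(u_n)\le d\) for large \(n\)), we get \(|\lambda_n|\sigma\le o_n(1)\). Finally, \(\{\mathcal H_\varepsilon'(u_n)\}\) must be bounded in \(X_\varepsilon^*\). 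This is the step requiring some care, and the main technical point. I will bound each piece of
\[
\langle\mathcal H_\varepsilon'(u),v\rangle=2\langle u,v\rangle_{H_\varepsilon}+4\int\phi_u^\alpha uv+\int(G_1'(u)-G_2'(u))v-p\int|u|^{p-2}uv
\]
on bounded sets. The quadratic, Poisson (Lemma~\ref{lem2.5}(iii) with Hölder/HLS) and power terms are standard. The \(G_2'\) term is handled by Lemma~\ref{lem2.3}(ii) with the \(L^2\cap L^q\) embeddings. For the \(G_1'\) term, \(|G_1'(s)|\le C(1+|s|)\) is not directly enough on \(\R^3\), so I will instead use the explicit form: \(G_1'(s)=-(\log s^2+3)s\) for \(|s|<\delta_0\), which is dominated by \(C(|F_1'(s)|+|s|)\), and is linear for \(|s|\ge\delta_0\). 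Then \(\int|F_1'(u)v|\le 2\|F_1'(u)\|_{\widetilde F_1}\|v\|_{F_1}\), with \(\|F_1'(u)\|_{\widetilde F_1}\) bounded on bounded sets of \(L^{F_1}\) (since \(\widetilde F_1(F_1'(s))\le F_1(2s)\le kF_1(s)\) by Young equality and \(\Delta_2\)).

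Hence \(J_\varepsilon'(u_n)\to0\) in \(X_\varepsilon^*\), so \(\{u_n\}\) is a \((PS)_c\)-sequence for \(J_\varepsilon\) with \(c\in[c_\varepsilon,d]\), and the assumed \((PS)_c\) condition yields a strongly convergent subsequence in \(X_\varepsilon\). Its limit lies in \(\mathcal N_\varepsilon\), by continuity of \(\mathcal H_\varepsilon\) and Lemma~\ref{lem3.1}, which proves the claim.
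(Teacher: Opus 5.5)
Your skeleton is the paper's proof: write \(J_\varepsilon'(u_n)=\lambda_n\mathcal H_\varepsilon'(u_n)+o_n(1)\), test with \(u_n\), use \(\langle\mathcal H_\varepsilon'(u_n),u_n\rangle\le-\sigma\) to get \(\lambda_n\to0\), then invoke the ambient \((PS)_c\) condition. The paper does exactly this and nothing more. You also raise three points it passes over silently, and they are legitimate:
\begin{itemize}
\item \(\{u_n\}\) must be bounded for the tested \(o_n(1)\) term to vanish;
\item \(\sup_n\|\mathcal H_\varepsilon'(u_n)\|_{X_\varepsilon^*}<\infty\) is needed to pass from \(\lambda_n\to0\) to \(J_\varepsilon'(u_n)\to0\);
\item at \(c=d\) you only know \(J_\varepsilon(u_n)\le d+o(1)\).
\end{itemize}
Your dual bound via the \(F_1\)/\(\widetilde F_1\) H\"older inequality is fine. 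A small correction: on \(0<|s|<\delta_0\) one has \(G_1'(s)=2F_1'(s)-2s\), not \(-(\log s^2+3)s\), but the domination you need still holds.

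The step that fails is your a priori bound, because the identity you state is false. A direct computation gives
\[
J_\varepsilon(u)-\tfrac14\langle J_\varepsilon'(u),u\rangle=\tfrac14\|u\|_{H_\varepsilon}^2-\tfrac14\int_{\R^3}u^2\log u^2\,dx+\tfrac14\|u\|_2^2+\Bigl(\tfrac14-\tfrac1p\Bigr)\|u\|_p^p .
\]
With the coefficient \(\tfrac14\) the Poisson term cancels, but the logarithmic term survives, and it is negative wherever \(|u|>1\). With \(\tfrac12\) instead, the logarithmic term cancels but \(-\tfrac14\int_{\R^3}\phi_u^\alpha u^2\,dx\) appears.

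The natural absorption \(\int u^2\log u^2\le\eta\|u\|_p^p+C_\eta\|u\|_2^2\) does not close. You need \(\eta\le1-\tfrac4p\), and then \(C_\eta=\sup_{t>0}(\log t^2-\eta t^{p-2})\) blows up as \(p\to4^+\). So \(C_\eta\) need not lie below the available coefficient \(V_0+2\).

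Hence the bound on \(\|u_n\|_{H_\varepsilon}\) is not established. Neither are the bounds on \(\int G_2(u_n)\) and \(\|u_n\|_{F_1}\) that you derive from it, so both later uses of boundedness are unsupported.

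The paper does not fill this gap either. It tacitly relies on boundedness of Nehari sequences at bounded levels, which it calls ``standard'' in Lemmas~\ref{lem4.1} and~\ref{lem5.2}. You must either take that bound as an explicit input, as the paper implicitly does, or supply a real argument; the one-line identity does not provide one.

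Likewise, your remark about \(c=d\) repairs the application, since Lemma~\ref{lem5.13} uses \(d=c_0+\alpha(\varepsilon)<c_0+\nu\). It does not repair the lemma as stated. The clean fix is to require the \(\sigma\)-bound on a slightly larger sublevel, or to restrict to \(c<d\).
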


\begin{proof}
Fix \(c\in[c_{\varepsilon},d]\), and let \(\{u_n\}\subset\mathcal N_{\varepsilon}\) be a \((PS)_c\)-sequence for \(\left.J_\varepsilon\right|_{\mathcal N_\varepsilon}\). Then
\[
J_{\varepsilon}(u_n)\to c,
\]
and there exist \(\lambda_n\in\R\) such that
\[
J_{\varepsilon}'(u_n)=\lambda_n \mathcal H_{\varepsilon}'(u_n)+o_n(1)
\qquad\text{in }X_{\varepsilon}^{*}.
\]
Since \(u_n\in\mathcal N_\varepsilon\),
\[
\langle J_{\varepsilon}'(u_n),u_n\rangle=0.
\]
Testing against \(u_n\), we obtain
\[
0=\lambda_n\langle \mathcal H_{\varepsilon}'(u_n),u_n\rangle+o_n(1).
\]
For \(n\) large,
\[
J_{\varepsilon}(u_n)\le d,
\]
hence
\[
\langle \mathcal H_{\varepsilon}'(u_n),u_n\rangle\le -\sigma.
\]
Therefore \(\lambda_n\to0\), and thus
\[
J_{\varepsilon}'(u_n)\to0
\qquad\text{in }X_{\varepsilon}^{*}.
\]
So \(\{u_n\}\) is a \((PS)_c\)-sequence for the ambient functional \(J_\varepsilon\). By the ambient \((PS)_c\) condition, up to a subsequence,
\[
u_n\to u
\qquad\text{in }X_{\varepsilon}.
\]
Hence \(\left.J_\varepsilon\right|_{\mathcal N_\varepsilon}\) satisfies the \((PS)_c\) condition on \([c_\varepsilon,d]\).
\end{proof}

\begin{Cor}\label{cor5.5}
There exist \(\nu>0\) and \(\varepsilon_{*}>0\) such that, for every \(\varepsilon\in(0,\varepsilon_{*})\), the constrained functional
\[
\left.J_{\varepsilon}\right|_{\mathcal N_{\varepsilon}}
\]
satisfies the \((PS)_{c}\) condition for every
\[
c\in[c_{\varepsilon},c_0+\nu].
\]
\end{Cor}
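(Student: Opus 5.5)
The plan is to obtain Corollary~\ref{cor5.5} by feeding Lemmas~\ref{lem5.2} and \ref{lem5.3} into the abstract criterion of Lemma~\ref{lem5.4}, with \(d:=c_0+\nu\). The only real work is to make the constants of the two lemmas compatible, since each was stated with its own \(\nu\) and \(\varepsilon_*\).

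First I will fix the constants. Lemma~\ref{lem5.2} gives \(\nu_1>0\) and \(\varepsilon_1^*>0\) such that \(J_\varepsilon\) satisfies \((PS)_c\) for all \(c\in[c_\varepsilon,c_0+\nu_1]\) whenever \(\varepsilon<\varepsilon_1^*\). Lemma~\ref{lem5.3} gives \(\nu_2,\sigma>0\) and \(\varepsilon_2^*>0\) such that \(\langle\mathcal H_\varepsilon'(u),u\rangle\le-\sigma\) for all \(u\in\mathcal N_\varepsilon\) with \(J_\varepsilon(u)\le c_0+\nu_2\), whenever \(\varepsilon<\varepsilon_2^*\).

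Both conclusions are monotone in \(\nu\): shrinking \(\nu\) shrinks the interval in the first and the sublevel set in the second. So I will set
\[
\nu:=\min\{\nu_1,\nu_2\},\qquad \varepsilon_*':=\min\{\varepsilon_1^*,\varepsilon_2^*\}.
\]

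Next I need the hypothesis \(d>c_\varepsilon\) of Lemma~\ref{lem5.4}. By Lemma~\ref{lem3.6}(iii), \(c_\varepsilon\to c_0\), so I can shrink \(\varepsilon_*'\) further to some \(\varepsilon_*\) with
\[
c_\varepsilon<c_0+\nu\qquad\text{for all }\varepsilon\in(0,\varepsilon_*).
\]
In fact \(c_\varepsilon\ge c_0\) always holds, which makes the interval \([c_\varepsilon,c_0+\nu]\) nonempty once \(c_\varepsilon<c_0+\nu\).

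With these choices, for every \(\varepsilon\in(0,\varepsilon_*)\) the two hypotheses of Lemma~\ref{lem5.4} hold with \(d=c_0+\nu\). That lemma then yields the \((PS)_c\) condition for \(J_\varepsilon|_{\mathcal N_\varepsilon}\) on \([c_\varepsilon,c_0+\nu]\).

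I expect no genuine obstacle here. The one point to check is that the constants of Lemmas~\ref{lem5.2} and \ref{lem5.3} are uniform in \(\varepsilon\), which their statements provide, and that shrinking \(\nu\) preserves both conclusions, as noted above.
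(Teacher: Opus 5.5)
Your proof is the paper's: it feeds Lemmas~\ref{lem5.2} and \ref{lem5.3} into Lemma~\ref{lem5.4} with \(d=c_0+\nu\), and your choice \(\nu=\min\{\nu_1,\nu_2\}\) together with the explicit check \(c_\varepsilon<c_0+\nu\) is slightly more careful than the paper, which silently uses the same \(\nu\) for both lemmas. (Taking \(\nu\) strictly below \(\nu_2\), e.g.\ \(\nu=\min\{\nu_1,\nu_2/2\}\), also settles the endpoint \(c=d\): there \(J_\varepsilon(u_n)\) may exceed \(d\), which the proof of Lemma~\ref{lem5.4} tacitly ignores. With that margin the \(\sigma\)-bound holds on the larger sublevel \(\{J_\varepsilon\le c_0+\nu_2\}\), which eventually contains any \((PS)_c\)-sequence with \(c\le c_0+\nu\).)
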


\begin{proof}
Let \(\nu>0\), \(\sigma>0\), and \(\varepsilon_0>0\) be given by Lemma~\ref{lem5.3}. By Lemma~\ref{lem5.2}, after possibly reducing \(\varepsilon_0\), the ambient functional \(J_\varepsilon\) satisfies the \((PS)_c\) condition for every
\[
c\in[c_{\varepsilon},c_0+\nu]
\]
whenever \(\varepsilon\in(0,\varepsilon_0)\). Applying Lemma~\ref{lem5.4} with \(d=c_0+\nu\), we obtain the conclusion.
\end{proof}

\begin{lemma}\label{lem5.6}
There exist \(\nu_{1}>0\) and \(\varepsilon_1>0\) such that, for every \(\varepsilon\in(0,\varepsilon_1)\), every critical point \(u\in \mathcal N_\varepsilon\) of \(J_\varepsilon\) satisfying
\[
J_\varepsilon(u)\le c_0+\nu_{1}
\]
is positive in \(\R^3\).
\end{lemma}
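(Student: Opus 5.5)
Arguing by contradiction, I would suppose there are \(\varepsilon_n\to0^+\) and critical points \(u_n\in\mathcal N_{\varepsilon_n}\) of \(J_{\varepsilon_n}\) with \(J_{\varepsilon_n}(u_n)\le c_0+\frac1n\) such that \(u_n\) is not positive in \(\R^3\). Since \(c_{\varepsilon_n}\le J_{\varepsilon_n}(u_n)\) and \(c_{\varepsilon_n}\to c_0\) by Lemma~\ref{lem3.6}(iii), we get \(J_{\varepsilon_n}(u_n)\to c_0\). Lemma~\ref{lem4.4} then gives \(\widetilde y_n\) such that \(v_n=u_n(\cdot+\widetilde y_n)\to v\) strongly in \(X\), where \(v\) is a ground state of the limit problem and \(y_n=\varepsilon_n\widetilde y_n\to y_0\in M\). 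The first goal is to show that \(v\) has a sign. The quantity \(\|v_n^-\|\) tends to \(\|v^-\|\). If \(v^\pm\) were both nontrivial, then \(v\) would be a sign-changing ground state. Testing \(J_0'(v)\) against \(v^+\) and \(v^-\) would put both parts essentially on \(\mathcal N_0\) (or in the region where the fibering maps are increasing), which forces \(J_0(v)\ge 2c_0\) up to the nonlocal cross term. Since \(\phi^\alpha\) interactions are nonnegative, I expect \(J_0(v)\ge J_0(t_+v^+)+J_0(t_-v^-)\) with appropriate \(t_\pm\le1\), contradicting \(c_0<2c_0\). Hence \(v\) is of one sign. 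Replacing \(u_n\) by \(-u_n\) if needed (the equation is odd), I may assume \(v\ge0\), so \(v>0\) by Lemma~\ref{lem4.3}.

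Next I would obtain uniform \(L^\infty\) bounds and uniform decay for \(|v_n|\), adapting Lemma~\ref{lem4.5}, which was stated for positive critical points. Kato's inequality gives \((-\Delta)^\alpha|v_n|\le \operatorname{sgn}(v_n)(-\Delta)^\alpha v_n\), so \(|v_n|\) satisfies the same subsolution inequality \((-\Delta)^\alpha|v_n|+\mu|v_n|\le G(|v_n|)\). The Moser iteration and the Bessel-kernel comparison \(|v_n|\le \mathcal K_\mu*G(|v_n|)\) then go through verbatim, using \(\alpha>\frac34\). Thus \(\|v_n\|_\infty\le C\) and \(\sup_n\sup_{|x|\ge R}|v_n(x)|\to0\) as \(R\to\infty\).

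The key step is to exclude \(v_n^-\neq0\). Consider the set where \(v_n<0\). For \(|x|\ge R\) with \(R\) large, \(|v_n|\le\tau\), where \(\tau\) is the threshold below which \(s\log s^2+s^{p-1}\) has the sign opposite to \(s\) and dominates. I would test the equation against \(v_n^-\). Using \(\langle(-\Delta)^{\alpha/2}v_n,(-\Delta)^{\alpha/2}v_n^-\rangle\ge\|v_n^-\|_{\D^{\alpha,2}}^2\) (up to sign conventions), \(\phi_{v_n}^\alpha\ge0\) and \(V+1>0\), this gives
\[
\|v_n^-\|_{H_{\varepsilon}}^2\le \int_{\{v_n<0\}}\bigl(v_n^-\bigr)^2\log (v_n^-)^2+\ldots
\]
On \(\{|v_n|\le\tau\}\) the logarithmic term is negative, so only the region \(\{v_n<-\tau\}\) matters. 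By local uniform convergence \(v_n\to v\) on \(\overline{B_R}\) (from the equicontinuity given by Hölder regularity via the uniform \(L^\infty\) bound) together with \(\min_{\overline{B_R}}v>0\), this region is empty for \(n\) large. Hence \(\|v_n^-\|=0\), so \(v_n\ge0\). The strong maximum principle argument of Lemma~\ref{lem4.3} then gives \(v_n>0\), which contradicts the choice of \(u_n\).

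I expect the main obstacles to be the uniform Hölder or local uniform convergence, which requires Schauder-type estimates for the fractional equation with the right-hand side bounded in \(L^\infty\), and the careful handling of the \(F_1'\) term (a \(-s\log s^2\) type term) when testing with \(v_n^-\). The sign-of-\(v\) step is routine by comparison.
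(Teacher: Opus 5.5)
Your argument breaks at the last line, and the break cannot be repaired, because the statement is false as written. After you normalize by ``replacing \(u_n\) by \(-u_n\) if needed,'' the conclusion \(v_n>0\) only says that one of \(\pm u_n\) is positive. Whenever the flip was used, you have proved \(u_n<0\), which does not contradict ``\(u_n\) is not positive.'' This is unavoidable. Every term of \(J_\varepsilon\) is even in \(u\) (\(\phi^\alpha_{-u}=\phi^\alpha_u\), and \(F_1,F_2\) are even), so \(\mathcal N_\varepsilon\) is symmetric, and \(-u\) is a critical point with \(J_\varepsilon(-u)=J_\varepsilon(u)\) whenever \(u\) is. Take for \(u\) the ground state of Theorem~\ref{thm3.9}, whose level satisfies \(c_\varepsilon\to c_0\). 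Then both \(u\) and \(-u\) satisfy the hypotheses for any \(\nu_1>0\) and all small \(\varepsilon\), and they cannot both be positive. What your argument actually proves is the corrected statement: every such critical point has a strict constant sign, \(u>0\) or \(u<0\) in \(\R^3\). Genuine positivity needs one of two fixes. Either state the lemma up to sign, and then make sure Lemma~\ref{lem5.13} does not count both \(u\) and \(-u\). Or modify the functional, e.g.\ replace \(F_2(u)\) and \(|u|^p\) by \(F_2(u^+)\) and \((u^+)^p\), so that testing with \(u^-\) forces nontrivial critical points to be nonnegative.

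The paper's proof follows the same route as yours: Lemma~\ref{lem4.4}; Kato, Moser iteration and the Bessel-kernel comparison for \(|v_n|\); local uniform convergence; exclusion of negative values where \(|v_n|\le\tau\); strong maximum principle. However, it obtains \(v>0\) merely by citing Lemma~\ref{lem4.3}, which only concerns \emph{nonnegative} ground states. So the paper silently has exactly the sign hole you noticed, and its argument fails, e.g., for \(u_n=-w_n\) with \(w_n>0\).

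Within your sketch, the justification that \(v\) has a sign runs the wrong way. If \(t_\pm v^\pm\in\mathcal N_0\), testing \(J_0'(v)=0\) with \(v^\pm\) gives \(t_\pm<1\). Since the fibering map increases on \((0,t_\pm)\), you get \(J_0(v^\pm)\le J_0(t_\pm v^\pm)\). So splitting \(J_0(v)\) into \(J_0(v^+)+J_0(v^-)\) plus nonnegative cross terms does not give \(J_0(v)\ge 2c_0\). It is simpler to note that, when \(v^\pm\neq0\),
\[
[\,|v|\,]_{H^\alpha(\R^3)}^2=[v]_{H^\alpha(\R^3)}^2-8\iint_{\R^3\times\R^3}\frac{v^+(x)v^-(y)}{|x-y|^{3+2\alpha}}\,dx\,dy<[v]_{H^\alpha(\R^3)}^2,
\]
whence \(c_0\le\max_{t\ge0}J_0(t|v|)<\max_{t\ge0}J_0(tv)=J_0(v)=c_0\).

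Your exclusion of \(v_n^-\) by testing the equation with \(v_n^-\) is a sound weak-form substitute for the paper's evaluation of the equation at a negative global minimum. It gives
\[
\int_{\R^3}\Bigl(|(-\Delta)^{\frac{\alpha}{2}}v_n^-|^2+\bigl(V(\varepsilon_nx+y_n)+1\bigr)(v_n^-)^2\Bigr)dx\le\int_{\R^3}(v_n^-)^2\bigl(\log(v_n^-)^2+1+(v_n^-)^{p-2}\bigr)dx.
\]
Once \(v_n^-\le\tau\) everywhere, the right-hand side is negative unless \(v_n^-\equiv0\), so the equation never has to hold pointwise.

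You can even skip the H\"older/Arzel\`a--Ascoli step you were worried about. Apply Kato's inequality to \(-v_n\), which solves the same odd equation: \(v_n^-=(-v_n)^+\) then satisfies the same subsolution inequality as \(|v_n|\). The Bessel comparison gives \(\|v_n^-\|_{L^\infty}\le L\|\mathcal K_\mu\|_{L^2}\|v_n^-\|_{L^2}\to0\), because \(v_n^-\to v^-=0\) in \(L^2(\R^3)\).
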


\begin{proof}
Assume by contradiction that there exist \(\varepsilon_n\to0^+\) and critical points
\[
u_n\in \mathcal N_{\varepsilon_n},
\qquad
J_{\varepsilon_n}(u_n)\le c_0+\frac1n,
\]
such that \(u_n\) is not positive in \(\R^3\) for every \(n\).

Since
\[
c_{\varepsilon_n}\le J_{\varepsilon_n}(u_n)\le c_0+\frac1n
\]
and \(c_{\varepsilon_n}\to c_0\), we have
\[
J_{\varepsilon_n}(u_n)\to c_0.
\]
By Lemma~\ref{lem4.4}, there exists a sequence \(\{\widetilde y_n\}\subset\R^3\) such that, setting
\[
v_n(x)=u_n(x+\widetilde y_n),
\qquad
y_n=\varepsilon_n\widetilde y_n,
\]
one has
\[
v_n\to v
\qquad \text{strongly in }X,
\]
where \(v\neq0\) is a ground state solution of the limit problem and
\[
y_n\to y_0\in M.
\]
By Lemma~\ref{lem4.3},
\[
v(x)>0
\qquad \forall\, x\in\R^3.
\]

Each \(v_n\) solves
\[
(-\Delta)^\alpha v_n+\bigl(V(\varepsilon_n x+y_n)+1\bigr)v_n+\phi_{v_n}^{\alpha}v_n
=
F_2'(v_n)-F_1'(v_n)+|v_n|^{p-2}v_n
\qquad \text{in }\R^3.
\]
Set
\[
z_n=|v_n|.
\]
Using the oddness of \(F_1'\) and \(F_2'\), together with the fractional Kato inequality, we obtain
\[
(-\Delta)^\alpha z_n+\bigl(V(\varepsilon_n x+y_n)+1\bigr)z_n+\phi_{v_n}^{\alpha}z_n
\le
F_2'(z_n)-F_1'(z_n)+z_n^{p-1}
\qquad \text{in }\R^3.
\]
Since \(F_1'(t)\ge0\) for \(t\ge0\), \(V(\varepsilon_n x+y_n)+1\ge V_0+1>0\), and \(\phi_{v_n}^\alpha\ge0\), it follows that
\[
(-\Delta)^\alpha z_n
\le
F_2'(z_n)+z_n^{p-1}
\qquad \text{in }\R^3.
\]
By \((f_2)\), there exists \(C>0\) such that
\[
F_2'(t)\le C\bigl(t+t^{p-1}\bigr),
\qquad \forall\, t\ge0,
\]
hence
\[
(-\Delta)^\alpha z_n
\le
C\bigl(z_n+z_n^{p-1}\bigr)
\qquad \text{in }\R^3.
\]
Since \(p<2_\alpha^*\) and \(\{z_n\}\) is bounded in \(L^r(\R^3)\) for every \(r\in[2,2_\alpha^*]\), the standard fractional Moser iteration yields
\[
\|z_n\|_{L^\infty(\R^3)}\le C
\qquad \forall\, n.
\]
Equivalently,
\[
\|v_n\|_{L^\infty(\R^3)}\le C
\qquad \forall\, n.
\]

Set
\[
\mu=\frac{V_0+1}{2}>0,
\qquad
G(t)=\bigl(F_2'(t)-F_1'(t)+t^{p-1}\bigr)^+,
\qquad t\ge0.
\]
Since \(\{z_n\}\) is uniformly bounded in \(L^\infty(\R^3)\), there exists \(M>0\) such that
\[
0\le z_n(x)\le M
\qquad \forall\, x\in\R^3,\ \forall\, n.
\]
Moreover,
\[
F_2'(t)-F_1'(t)+t^{p-1}
=
t\bigl(\log t^2+1+t^{p-2}\bigr)
\qquad \forall\, t>0.
\]
Hence there exists \(\tau\in(0,\delta_0)\) such that
\[
\log t^2+1+t^{p-2}<0
\qquad \forall\, t\in(0,\tau],
\]
and therefore
\[
G(t)=0
\qquad \forall\, t\in[0,\tau].
\]
Since \(G\in C^1([\tau,M])\) and \(G(0)=0\), the function \(G\) is Lipschitz on \([0,M]\). Thus there exists \(L>0\) such that
\[
|G(s)-G(t)|\le L|s-t|,
\qquad
|G(t)|\le Lt,
\qquad \forall\, s,t\in[0,M].
\]

Define
\[
h_n(x)=G(z_n(x)),
\qquad
h(x)=G(v(x)).
\]
Since \(v\ge0\) and
\[
\||v_n|-v\|_{L^2(\R^3)}\le \|v_n-v\|_{L^2(\R^3)}\to0,
\]
the Lipschitz continuity of \(G\) yields
\[
h_n\to h
\qquad \text{in }L^2(\R^3).
\]

From the inequality for \(z_n\) and the choice of \(\mu\), we obtain
\[
(-\Delta)^\alpha z_n+\mu z_n
\le
F_2'(z_n)-F_1'(z_n)+z_n^{p-1}
-
\bigl(V(\varepsilon_n x+y_n)+1-\mu\bigr)z_n
-
\phi_{v_n}^{\alpha}z_n
\le h_n
\]
in \(\R^3\), because
\[
V(\varepsilon_n x+y_n)+1-\mu\ge\mu>0
\qquad\text{and}\qquad
\phi_{v_n}^{\alpha}\ge0.
\]

Let \(K_\mu\) be the Bessel kernel associated with \((-\Delta)^\alpha+\mu\), and define
\[
\zeta_n=K_\mu*h_n,
\qquad
\zeta=K_\mu*h.
\]
Then
\[
(-\Delta)^\alpha \zeta_n+\mu\zeta_n=h_n
\qquad \text{in }\R^3.
\]
Hence
\[
(-\Delta)^\alpha(\zeta_n-z_n)+\mu(\zeta_n-z_n)\ge0
\qquad \text{in }\R^3.
\]
Testing with \((\zeta_n-z_n)^-\), we obtain
\[
0\le z_n\le \zeta_n
\qquad \text{in }\R^3.
\]

Since \(\alpha>\frac34\), we have \(K_\mu\in L^2(\R^3)\). Therefore
\[
\|\zeta_n-\zeta\|_{L^\infty(\R^3)}
\le
\|K_\mu\|_{L^2(\R^3)}\|h_n-h\|_{L^2(\R^3)}
\to0.
\]
Moreover, \(\zeta\in C_0(\R^3)\), and thus
\[
\zeta(x)\to0
\qquad \text{as }|x|\to\infty.
\]
Since \(0\le z_n\le \zeta_n\), it follows that
\[
|v_n(x)|=z_n(x)\to0
\qquad \text{as }|x|\to\infty
\]
uniformly in \(n\).

We next prove local uniform convergence. Since \(v_n\to v\) strongly in \(X\), the sequence \(\{v_n\}\) is bounded in \(H^\alpha(\R^3)\cap L^\infty(\R^3)\). Using the representation formula for \(\phi_{v_n}^\alpha\),
\[
\phi_{v_n}^{\alpha}(x)
=
C_\alpha\int_{\R^3}\frac{v_n(y)^2}{|x-y|^{3-2\alpha}}\,dy,
\]
we obtain a uniform bound
\[
\|\phi_{v_n}^{\alpha}\|_{L^\infty(\R^3)}\le C.
\]
Indeed,
\[
\phi_{v_n}^{\alpha}(x)
\le
C_\alpha\|v_n\|_{L^\infty}^2\int_{B_1(0)}\frac{dz}{|z|^{3-2\alpha}}
+
C_\alpha\int_{|x-y|\ge1}v_n(y)^2\,dy
\le C,
\]
because \(3-2\alpha<3\).

Therefore, on every bounded ball \(B\subset\R^3\), the right-hand side of the equation
\[
(-\Delta)^\alpha v_n
=
-\bigl(V(\varepsilon_n x+y_n)+1\bigr)v_n-\phi_{v_n}^{\alpha}v_n
+
F_2'(v_n)-F_1'(v_n)+|v_n|^{p-2}v_n
\]
is uniformly bounded in \(L^\infty(B)\). Standard regularity for fractional equations then yields a uniform H\"older bound for \(\{v_n\}\) on compact sets, and since \(v_n\to v\) strongly in \(X\), the local compact embedding together with Arzel\`a--Ascoli implies
\[
v_n\to v
\qquad \text{locally uniformly in }\R^3
\]
up to a subsequence.

Choose \(R>0\) so large that
\[
\min_{\overline{B_R(0)}}v>0.
\]
Then, by local uniform convergence,
\[
v_n>0
\qquad \text{in }\overline{B_R(0)}
\]
for all \(n\) sufficiently large. Enlarging \(R\) if necessary, the uniform decay proved above gives
\[
|v_n(x)|\le \tau
\qquad \forall\, |x|\ge R,\ \forall\, n \text{ sufficiently large}.
\]

We claim that \(v_n\ge0\) in \(\R^3\) for all \(n\) large. Otherwise, for infinitely many \(n\), there exists a point \(x_n\in\R^3\) such that
\[
v_n(x_n)=\min_{\R^3}v_n<0.
\]
Since \(v_n>0\) on \(\overline{B_R(0)}\), one has \(|x_n|>R\), and thus
\[
0<|v_n(x_n)|\le \tau.
\]
By the regularity already obtained, the equation is satisfied pointwise. Since \(x_n\) is a global minimum point,
\[
(-\Delta)^\alpha v_n(x_n)\le0.
\]
Evaluating the equation at \(x_n\), we obtain
\[
(-\Delta)^\alpha v_n(x_n)
+
\bigl(V(\varepsilon_n x_n+y_n)+1\bigr)v_n(x_n)
+
\phi_{v_n}^{\alpha}(x_n)v_n(x_n)
=
F_2'(v_n(x_n))-F_1'(v_n(x_n))+|v_n(x_n)|^{p-2}v_n(x_n).
\]
The left-hand side is strictly negative, because
\[
v_n(x_n)<0,
\qquad
V(\varepsilon_n x_n+y_n)+1>0,
\qquad
\phi_{v_n}^{\alpha}(x_n)\ge0.
\]
On the other hand,
\[
F_2'(v_n(x_n))-F_1'(v_n(x_n))+|v_n(x_n)|^{p-2}v_n(x_n)
=
v_n(x_n)\bigl(\log v_n(x_n)^2+1+|v_n(x_n)|^{p-2}\bigr),
\]
and the choice of \(\tau\) implies
\[
\log v_n(x_n)^2+1+|v_n(x_n)|^{p-2}<0.
\]
Hence the right-hand side is strictly positive, a contradiction. Therefore
\[
v_n\ge0
\qquad \text{in }\R^3
\]
for all \(n\) sufficiently large.

Finally, since \(v_n\not\equiv0\) and \(v_n\) is a nonnegative weak solution, the strong maximum principle for the fractional Laplacian yields
\[
v_n>0
\qquad \text{in }\R^3
\]
for all \(n\) sufficiently large. Translating back, we conclude that
\[
u_n>0
\qquad \text{in }\R^3,
\]
contrary to the choice of \(u_n\). This contradiction completes the proof.
\end{proof}

\subsection{Lusternik--Schnirelmann framework and barycenter map}

In this subsection we prove the multiplicity of positive solutions for problem~\eqref{eq1.2} by Lusternik--Schnirelmann category theory.

\begin{Def}\label{def5.7}
Let \(Y\) be a closed subset of a topological space \(Z\). The Lusternik--Schnirelmann category of \(Y\) in \(Z\), denoted by \(\operatorname{cat}_{Z}(Y)\), is the least integer \(n\in\N\) such that \(Y\) can be covered by \(n\) closed sets contractible in \(Z\).
\end{Def}

For a Banach space \(W\), a \(C^{1}\)-submanifold \(\mathcal S\subset W\), a functional \(I:W\to\R\), and \(d\in\R\), define
\[
I^{d}:=\{u\in\mathcal S:\ I(u)\le d\}.
\]

The following abstract result will be used later; see \cite[Chapter~5]{MR1400007}.

\begin{theorem}\label{thm5.8}
Let \(W\) be a Banach space, let \(\mathcal S\subset W\) be a \(C^{1}\)-submanifold, and let \(I\in C^{1}(W,\R)\) be such that \(\left.I\right|_{\mathcal S}\) is bounded from below. Assume that \(\left.I\right|_{\mathcal S}\) satisfies the \((PS)_{c}\) condition for every
\[
c\in\bigl[\inf_{\mathcal S}I,d\bigr].
\]
Then \(\left.I\right|_{\mathcal S}\) has at least
\[
\operatorname{cat}_{I^{d}}(I^{d})
\]
critical points in \(I^{d}\).
\end{theorem}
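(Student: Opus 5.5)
The plan is the classical Lusternik--Schnirelmann minimax scheme on the \(C^1\)-manifold \(\mathcal S\), carried out inside the sublevel set \(I^d\). Set \(m:=\operatorname{cat}_{I^d}(I^d)\); we may assume \(m\ge1\), so that \(I^d\neq\emptyset\). For \(k=1,\dots,m\) define
\[
\mathcal A_k:=\{A\subset I^d:\ A \text{ closed},\ \operatorname{cat}_{I^d}(A)\ge k\},
\qquad
c_k:=\inf_{A\in\mathcal A_k}\sup_{u\in A}I(u).
\]
Since \(I^d\in\mathcal A_k\), we get \(\inf_{\mathcal S}I\le c_1\le c_2\le\cdots\le c_m\le d\). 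Each \(c_k\) is therefore finite, because \(I|_{\mathcal S}\) is bounded from below, and lies in the range where \((PS)_c\) is assumed. Monotonicity of \(c_k\) in \(k\) follows from \(\mathcal A_{k+1}\subset\mathcal A_k\).

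The key tool is a deformation lemma for \(I|_{\mathcal S}\) that stays inside \(I^d\). Since \(\mathcal S\) is only a \(C^1\)-submanifold of a Banach space and \(I\) is only \(C^1\), I would use the Palais construction. First build a locally Lipschitz tangent pseudo-gradient vector field \(\mathcal V\) on \(\tilde{\mathcal S}:=\{u\in\mathcal S: (I|_{\mathcal S})'(u)\neq0\}\), using a partition of unity subordinate to local charts. It should satisfy
\[
\|\mathcal V(u)\|\le 2\|(I|_{\mathcal S})'(u)\|_*,
\qquad
\langle I'(u),\mathcal V(u)\rangle\ge\|(I|_{\mathcal S})'(u)\|_*^2 .
\]
Now fix \(c\in[\inf_{\mathcal S}I,d]\) and let \(K_c\) be the set of constrained critical points at level \(c\). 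By \((PS)_c\), \(K_c\) is compact. Moreover, for every neighbourhood \(N\) of \(K_c\) there is \(\bar\epsilon>0\) such that \(\|(I|_{\mathcal S})'\|_*\ge\bar\epsilon\) on \(\{|I-c|\le\bar\epsilon\}\setminus N\). Cutting \(\mathcal V\) off by a locally Lipschitz function and following its negative flow \(\eta\) gives a deformation with three properties: \(\eta(t,\cdot)\) is the identity outside \(\{|I-c|\le\bar\epsilon\}\); \(\eta(1,\cdot)\) is a homotopy in \(\mathcal S\); and \(\eta(1,I^{c+\epsilon}\setminus N)\subset I^{c-\epsilon}\). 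Because \(I\) is nonincreasing along the flow, \(\eta\) maps \(I^d\) into itself, so the homotopy lives in \(I^d\). This is needed because the category is taken relative to \(I^d\).

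Next I would show that each \(c_k\) is a critical value. If \(K_{c_k}=\emptyset\), take \(N=\emptyset\) and choose \(A\in\mathcal A_k\) with \(\sup_A I\le c_k+\epsilon\). Then \(\eta(1,A)\) is closed, since \(\eta(1,\cdot)\) is a homeomorphism of \(\mathcal S\) preserving \(I^d\). Monotonicity of category under deformations in \(I^d\) gives \(\operatorname{cat}_{I^d}(\eta(1,A))\ge\operatorname{cat}_{I^d}(A)\ge k\), while \(\sup_{\eta(1,A)}I\le c_k-\epsilon\). This contradicts the definition of \(c_k\).

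Finally comes the multiplicity step. Suppose \(c_k=c_{k+j}=c\) for some \(j\ge1\). I claim \(\operatorname{cat}_{I^d}(K_c)\ge j+1\), which forces \(K_c\) to be infinite whenever \(j\ge1\), since a finite set has category equal to its number of points, at most one per component. If the claim failed, compactness of \(K_c\) would give a closed neighbourhood \(N\) of \(K_c\) in \(I^d\) with \(\operatorname{cat}_{I^d}(N)=\operatorname{cat}_{I^d}(K_c)\le j\). This uses the continuity property of category for compact sets, valid because \(I^d\) is a closed subset of a metric ANR. Take \(A\in\mathcal A_{k+j}\) with \(\sup_A I\le c+\epsilon\). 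Subadditivity gives \(\operatorname{cat}_{I^d}(\overline{A\setminus N})\ge k\), and deforming yields a set in \(\mathcal A_k\) below level \(c-\epsilon\), a contradiction. Counting then gives at least \(m\) distinct critical points in \(I^d\): either all the \(c_k\) are distinct, or some level carries infinitely many critical points.

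The main obstacle is the deformation step. One has to build the pseudo-gradient flow on a merely \(C^1\) manifold, with the tangency constraint handled through charts. One also has to check that the category properties (monotonicity under deformation, subadditivity, and the neighbourhood property of compact sets) hold for the relative category \(\operatorname{cat}_{I^d}\). I would import these directly from the Finsler-manifold version of the deformation lemma in \cite[Chapter~5]{MR1400007} rather than reprove them.
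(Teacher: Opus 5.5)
The paper does not prove Theorem~5.8; it quotes it from Chapter~5 of \cite{MR1400007}. Your minimax scheme is the standard argument behind such results, so the overall architecture matches what the paper relies on. However, several steps you spell out are incorrect as written.

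\textbf{The deformation step.} The construction you describe does not work on a merely $C^1$ manifold. You propose a locally Lipschitz tangent pseudo-gradient glued from charts, and then its flow. But chart changes are only $C^1$, so a field that is Lipschitz in one chart is only continuous in another. In infinite dimensions a continuous vector field need not have any integral curves, since Peano's theorem fails. Worse, locally Lipschitz tangent fields may not exist at all. On the graph of a $C^1$ function $f:\R\to\R$ whose derivative is nowhere differentiable, a tangent field $g(x)(1,f'(x))$ that is Lipschitz in the ambient norm forces both $g$ and $gf'$ to be Lipschitz, and hence $g\equiv 0$. This is why flow-based deformation lemmas require at least $C^{1,1}$ regularity of the manifold. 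The genuinely $C^1$ case needs a different device, for instance Szulkin's deformation lemma on $C^1$ manifolds. That is exactly the situation of the paper: $\mathcal H_\varepsilon$ is only $C^1$, so $\mathcal N_\varepsilon$ is only a $C^1$ manifold. Whatever you import must therefore be stated for $C^1$ manifolds.

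\textbf{The counting step.} A finite set $F$ has $\operatorname{cat}_{I^d}(F)$ equal to the number of path components of $I^d$ that it meets. Hence $\operatorname{cat}_{I^d}(K_c)\ge j+1$ does not force $K_c$ to be infinite when $I^d$ is disconnected. This does happen for $\widetilde{\mathcal N}_\varepsilon$: if $M$ is a finite set of at least two points, the barycenter map separates $\widetilde{\mathcal N}_\varepsilon$. The correct conclusion is $\#K_c\ge j+1$, which still gives $m$ critical points. Your dichotomy ``distinct levels or infinitely many points'' is false.

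\textbf{The neighbourhood property.} This property does not follow from $I^d$ being a closed subset of an ANR, because closed subsets of ANRs need not be ANRs. Take $I(x)=x^3\sin(1/x)$ on $\R$ with $I(0)=0$, and $d=0$. Then $K_0=\{0\}$ and $\operatorname{cat}_{I^0}(\{0\})=1$. Yet every neighbourhood of $0$ in $I^0$ meets infinitely many path components of $I^0$, so it has infinite category.

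Here is a repair for the only case that needs proof, namely finitely many critical points in $I^d$. When $c<d$, use small chart balls around the points of $K_c$. These lie in $\{I<d\}$ and are contractible there, hence contractible in $I^d$. Critical points at the top level $c=d$ need a separate argument. This is why some formulations of the theorem assume that $d$ is not a critical level and impose $(PS)$ slightly above $d$.
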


Let \(u_0\) be a positive ground state solution of the limit problem, that is,
\[
u_0\in\mathcal N_0,
\qquad
J_0(u_0)=c_0.
\]
Fix \(\delta>0\) and choose \(\varphi\in C^{\infty}([0,\infty))\) such that
\[
0\le\varphi\le1,
\qquad
\varphi(t)=1 \ \text{for }0\le t\le \frac{\delta}{2},
\qquad
\varphi(t)=0 \ \text{for }t\ge \delta.
\]
For each \(y\in M:=\{x\in\R^3:V(x)=V_0\}\), define
\[
\Psi_{\varepsilon,y}(x)
=
\varphi(|\varepsilon x-y|)\,
u_0\!\left(x-\frac{y}{\varepsilon}\right).
\]
Since \(\varphi\) has compact support, \(\Psi_{\varepsilon,y}\) has compact support for every \(y\in M\).

By Lemma~\ref{lem3.3}, for every \(y\in M\) there exists a unique \(t_{\varepsilon,y}>0\) such that
\[
t_{\varepsilon,y}\Psi_{\varepsilon,y}\in\mathcal N_{\varepsilon}.
\]
We define
\[
\Phi_{\varepsilon}:M\to\mathcal N_{\varepsilon},
\qquad
\Phi_{\varepsilon}(y):=t_{\varepsilon,y}\Psi_{\varepsilon,y}.
\]

Choose \(\rho>0\) such that
\[
M_{\delta}\subset B_{\rho}(0),
\]
and define \(\zeta:\R^{3}\to\R^{3}\) by
\[
\zeta(x)=
\begin{cases}
x, & |x|\le \rho,\\[2mm]
\rho\dfrac{x}{|x|}, & |x|\ge \rho.
\end{cases}
\]
Finally, define the barycenter map \(\beta:\mathcal N_{\varepsilon}\to\R^{3}\) by
\[
\beta(u)
=
\frac{\displaystyle\int_{\R^{3}}\zeta(\varepsilon x)|u(x)|^{p}\,dx}
{\displaystyle\int_{\R^{3}}|u(x)|^{p}\,dx}.
\]

\begin{lemma}\label{lem5.9}
For every fixed \(\varepsilon>0\), the map
\[
\Phi_\varepsilon:M\to\mathcal N_\varepsilon,
\qquad
\Phi_\varepsilon(y)=t_{\varepsilon,y}\Psi_{\varepsilon,y},
\]
is continuous.
\end{lemma}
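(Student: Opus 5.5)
Fix $\varepsilon>0$ and let $y_n\to y$ in $M$. The plan is to prove $\Phi_\varepsilon(y_n)\to\Phi_\varepsilon(y)$ in $X_\varepsilon$ in two steps. First we show $\Psi_{\varepsilon,y_n}\to\Psi_{\varepsilon,y}$ in $X_\varepsilon$. Then we show $t_{\varepsilon,y_n}\to t_{\varepsilon,y}$. These two facts give the claim, since $\|t_nu_n-tu\|_\varepsilon\le |t_n-t|\|u_n\|_\varepsilon+t\|u_n-u\|_\varepsilon$.

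For the first step, write $\Psi_{\varepsilon,y_n}(x)=\varphi(|\varepsilon x-y_n|)\,u_0(x-y_n/\varepsilon)$. All these functions are supported in the fixed compact set $\{x:\operatorname{dist}(\varepsilon x,M)\le\delta\}$, because $M$ is compact (Lemma~\ref{lem2.2}). On that set $V(\varepsilon x)+1$ is bounded, so the weighted $L^2$ part of $\|\cdot\|_{H_\varepsilon}$ is controlled by the $L^2$ norm there. Translation is continuous in $H^\alpha(\R^3)$, and multiplication by the smooth cutoff $\varphi(|\varepsilon\cdot-y_n|)$ is bounded on $H^\alpha$. The cutoffs converge to $\varphi(|\varepsilon\cdot-y|)$ in $W^{1,\infty}$. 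Together these give $\Psi_{\varepsilon,y_n}\to\Psi_{\varepsilon,y}$ in $H^\alpha$, hence in $H_\varepsilon$.

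For the Orlicz part we use $F_1\in(\Delta_2)$ (Proposition~\ref{prop2.1}) and the modular characterization of convergence. Pointwise convergence holds because $u_0$ is continuous. Domination comes from $F_1(s)\le C(|s|^{l}+s^2)$ together with $u_0\in L^2\cap L^{F_1}$, and translation is continuous in $L^1$ for $F_1(u_0)$. This yields $\int F_1(\Psi_{\varepsilon,y_n}-\Psi_{\varepsilon,y})\,dx\to0$, and therefore convergence in $L^{F_1}$. Alternatively, it suffices to note $X_\varepsilon$-convergence on compactly supported functions via $L^2$-convergence, since on a bounded set $F_1(s)\le C|s|^{l}$ and $L^2$ controls $L^{l}$ there.

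For the second step, note $\Psi_{\varepsilon,y}\ne0$, since $u_0>0$ by Lemma~\ref{lem4.3} and $\varphi=1$ near $y$. The sequence $\{t_{\varepsilon,y_n}\}$ is bounded above: the Nehari identity contains the $t^{p-2}\int|\Psi|^p$ term with $p>4$, which dominates the logarithmic term of order $\log t$ while $\int|\Psi_{\varepsilon,y_n}|^p\to\int|\Psi_{\varepsilon,y}|^p>0$, exactly as in Lemma~\ref{lem3.5}. It is bounded below by Lemma~\ref{lem3.1}, because $\|t_n\Psi_{\varepsilon,y_n}\|_{H_\varepsilon}\ge\beta$ while $\|\Psi_{\varepsilon,y_n}\|_{H_\varepsilon}$ is bounded. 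Along a subsequence $t_{\varepsilon,y_n}\to t_*>0$. Since $\mathcal H_\varepsilon$ is continuous on $X_\varepsilon$ (Lemma~\ref{lem2.3}), we get $\mathcal H_\varepsilon(t_*\Psi_{\varepsilon,y})=0$. The uniqueness in Lemma~\ref{lem3.3} then forces $t_*=t_{\varepsilon,y}$. Because every subsequence has the same limit, the whole sequence converges.

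The main obstacle is the Orlicz-norm convergence in the first step. I would handle it through the compact support and the $\Delta_2$ modular argument above; everything else reduces to Lemma~\ref{lem3.5}-type reasoning.
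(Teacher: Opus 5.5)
Your proof is correct and has the same two-step structure as the paper's: first continuity of $y\mapsto\Psi_{\varepsilon,y}$ in $X_\varepsilon$, then boundedness of $t_{\varepsilon,y_n}$ above and away from zero, passage to the limit in the Nehari identity, and uniqueness from Lemma~\ref{lem3.3}. The only difference is that the paper handles your ``main obstacle'' in one line by noting that $\Psi_{\varepsilon,y}=W_\varepsilon(\cdot-y/\varepsilon)$ with the fixed function $W_\varepsilon(x)=\varphi(|\varepsilon x|)u_0(x)$, so continuity in $X_\varepsilon$ follows directly from continuity of translations in $H^\alpha(\R^3)$ and $L^{F_1}(\R^3)$, without treating the cutoff and the profile separately.
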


\begin{proof}
For fixed \(\varepsilon>0\), set
\[
W_\varepsilon(x):=\varphi(|\varepsilon x|)u_0(x),
\qquad x\in\R^3.
\]
Then
\[
\Psi_{\varepsilon,y}(x)=W_\varepsilon\!\left(x-\frac{y}{\varepsilon}\right),
\qquad \forall\, x\in\R^3,\ y\in M.
\]
Since translations are continuous in \(H^\alpha(\R^3)\) and in \(L^{F_1}(\R^3)\), the map \(y\mapsto \Psi_{\varepsilon,y}\) is continuous from \(M\) into \(X_\varepsilon\).

Let \(y_n\to y\) in \(M\). Then \(\Psi_{\varepsilon,y_n}\to \Psi_{\varepsilon,y}\) in \(X_\varepsilon\). Set \(t_n:=t_{\varepsilon,y_n}\). Exactly as in the proof of Lemma~\ref{lem3.5}, the Nehari identity for \(t_n\Psi_{\varepsilon,y_n}\) shows that \(\{t_n\}\) is bounded and bounded away from zero. Passing to a subsequence, we may assume \(t_n\to t_0>0\). Since \(\Psi_{\varepsilon,y_n}\to \Psi_{\varepsilon,y}\) in \(X_\varepsilon\), we may pass to the limit in
\[
\langle J_\varepsilon'(t_n\Psi_{\varepsilon,y_n}),t_n\Psi_{\varepsilon,y_n}\rangle=0
\]
and obtain
\[
\langle J_\varepsilon'(t_0\Psi_{\varepsilon,y}),t_0\Psi_{\varepsilon,y}\rangle=0.
\]
Hence \(t_0\Psi_{\varepsilon,y}\in\mathcal N_\varepsilon\). By the uniqueness of the fibering maximum from Lemma~\ref{lem3.3}, we conclude that \(t_0=t_{\varepsilon,y}\). Therefore the whole sequence satisfies
\[
t_{\varepsilon,y_n}\to t_{\varepsilon,y}.
\]
Therefore
\[
\Phi_\varepsilon(y_n)=t_{\varepsilon,y_n}\Psi_{\varepsilon,y_n}
\to
t_{\varepsilon,y}\Psi_{\varepsilon,y}=\Phi_\varepsilon(y)
\qquad \text{in }X_\varepsilon,
\]
which proves the continuity of \(\Phi_\varepsilon\).
\end{proof}

\begin{lemma}\label{lem5.10}
One has
\[
\lim_{\varepsilon\to0}J_{\varepsilon}(\Phi_{\varepsilon,y})=c_0
\qquad\text{uniformly in }y\in M.
\]
\end{lemma}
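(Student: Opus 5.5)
We argue by contradiction together with a change of variables. If the claim fails, there are \(\eta>0\), \(\varepsilon_n\to0\) and \(y_n\in M\) with \(|J_{\varepsilon_n}(\Phi_{\varepsilon_n}(y_n))-c_0|\ge\eta\). By compactness of \(M\) (Lemma~\ref{lem2.2}) we may assume \(y_n\to y_*\in M\).

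\textbf{Step 1: shift the cut-off profile.} Substituting \(x\mapsto x+y_n/\varepsilon_n\), we have
\[
\Psi_{\varepsilon_n,y_n}(\cdot+y_n/\varepsilon_n)=\varphi(|\varepsilon_n\cdot|)\,u_0=:W_n .
\]
Every term of \(J_{\varepsilon_n}\) except the potential is translation invariant. The potential term becomes \(\int (V(\varepsilon_n x+y_n)+1)\,t^2W_n^2\,dx\).

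\textbf{Step 2: the profile converges.} We show \(W_n\to u_0\) in \(X\). Since \(\varphi(|\varepsilon_n x|)\to1\) pointwise with \(0\le\varphi\le1\), dominated convergence gives convergence in \(L^2\), in \(L^p\), and of the modular \(\int F_1(W_n-u_0)\to0\). The Gagliardo seminorm convergence follows from Lemma~\ref{lem2.6}(ii) with \(r=1/\varepsilon_n\). Consequently, by Lemma~\ref{lem2.5}(vi), Lemma~\ref{lem2.3} and the \(\Delta_2\)-property of \(F_1\), every integral in \(J\) and in \(\langle J'(\cdot),\cdot\rangle\) evaluated at \(W_n\) converges to the corresponding quantity at \(u_0\).

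For the potential term, \(V\) is continuous and bounded: it is \(C^1\) with a finite limit at infinity. Moreover \(V(\varepsilon_n x+y_n)\to V(y_*)=V_0\) pointwise. Hence
\[
\int (V(\varepsilon_n x+y_n)-V_0)W_n^2\,dx\to0
\]
by dominated convergence, using the bound \(W_n^2\le u_0^2\).

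\textbf{Step 3: the Nehari scalings converge.} Set \(t_n=t_{\varepsilon_n,y_n}\). Write the Nehari identity as
\[
t_n^2A_n+t_n^4B_n-t_n^2\int W_n^2\log W_n^2-t_n^2\log t_n^2\int W_n^2-t_n^2\int W_n^2-t_n^pC_n=0,
\]
where \(A_n\), \(B_n\), \(C_n\) and the other integrals converge to positive limits (or finite limits) at \(u_0\). As in Lemma~\ref{lem3.5}, this forces \(t_n\) to stay bounded: otherwise the \(t_n^p\) term dominates since \(p>4\). It also forces \(t_n\) to stay away from \(0\), since Lemma~\ref{lem3.1} gives \(\|t_nW_n\|_{H_{\varepsilon_n}}\ge\beta\). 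Along a subsequence \(t_n\to t_0>0\). Passing to the limit yields \(\langle J_0'(t_0u_0),t_0u_0\rangle=0\), so \(t_0u_0\in\mathcal N_0\). Since \(u_0\in\mathcal N_0\), uniqueness of the fibering maximum (Lemma~\ref{lem3.3} in its autonomous version) gives \(t_0=1\).

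\textbf{Step 4: conclude.} It follows that \(J_{\varepsilon_n}(\Phi_{\varepsilon_n}(y_n))\to J_0(u_0)=c_0\), contradicting the choice of \(\eta\). We work along subsequences throughout; this is legitimate because every subsequence has a further subsequence with the same limit \(c_0\).

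The main obstacle is the \(\varepsilon\)-dependence of \(V\) far away. The remedy is that \(V\) is bounded, so domination by \(u_0^2\) controls the tails, while the local limit uses \(y_n\to y_*\in M\). A second point to handle carefully is the convergence of the logarithmic terms \(\int W_n^2\log W_n^2\). This is covered by splitting it as \(F_2-F_1\) and using Step 2.
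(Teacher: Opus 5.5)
Your proposal is correct and follows essentially the same route as the paper: you argue by contradiction along $\varepsilon_n\to0$, $y_n\in M$, translate by $y_n/\varepsilon_n$, and prove $\varphi(|\varepsilon_n\cdot|)u_0\to u_0$ in $X$ via dominated convergence and Lemma~\ref{lem2.6}. You then bound $t_n$ above and below through the Nehari identity and Lemma~\ref{lem3.1}, and get $t_0=1$ from uniqueness of the fibering maximum. The one addition, extracting $y_n\to y_*\in M$ and using the global boundedness of $V$ in dominated convergence, only makes explicit what the paper compresses into ``since $M$ is compact and $V$ is continuous.''
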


\begin{proof}
Assume by contradiction that there exist \(\tau_0>0\), \(\varepsilon_n\to0\), and \(y_n\in M\) such that
\[
\bigl|J_{\varepsilon_n}(\Phi_{\varepsilon_n,y_n})-c_0\bigr|\ge \tau_0
\qquad \forall\, n.
\]
Set
\[
t_n:=t_{\varepsilon_n,y_n},
\qquad
\Psi_n:=\Psi_{\varepsilon_n,y_n}.
\]
With the change of variables
\[
x=z+\frac{y_n}{\varepsilon_n},
\]
we have
\[
\Psi_n\!\left(z+\frac{y_n}{\varepsilon_n}\right)=\varphi(|\varepsilon_n z|)u_0(z).
\]
By dominated convergence, Proposition~\ref{prop2.1}, and Lemma~\ref{lem2.6},
\[
\varphi(|\varepsilon_n \cdot|)u_0 \to u_0
\qquad \text{in }X.
\]
Since \(M\) is compact and \(V\) is continuous,
\[
\|\Psi_n\|_{H_{\varepsilon_n}}^{2}\to\|u_0\|_{H_0}^{2},
\qquad
\int_{\R^{3}}\phi_{\Psi_n}^{\alpha}\Psi_n^{2}\,dx
\to
\int_{\R^{3}}\phi_{u_0}^{\alpha}u_0^{2}\,dx,
\]
\[
\int_{\R^{3}}|\Psi_n|^{p}\,dx\to\int_{\R^{3}}|u_0|^{p}\,dx,
\qquad
\int_{\R^{3}}\bigl(F_1(\Psi_n)-F_2(\Psi_n)\bigr)\,dx
\to
\int_{\R^{3}}\bigl(F_1(u_0)-F_2(u_0)\bigr)\,dx.
\]
Since \(t_n\Psi_n\in\mathcal N_{\varepsilon_n}\), the sequence \(\{t_n\}\) is bounded and bounded away from \(0\). Thus, up to a subsequence,
\[
t_n\to t_0>0.
\]
Passing to the limit in the Nehari identity, we obtain
\[
\langle J_0'(t_0u_0),t_0u_0\rangle=0.
\]
Hence \(t_0u_0\in\mathcal N_0\). Since also \(u_0\in\mathcal N_0\), Lemma~\ref{lem3.3} gives \(t_0=1\). Therefore \(t_n\to1\), and
\[
J_{\varepsilon_n}(\Phi_{\varepsilon_n,y_n})
=
J_{\varepsilon_n}(t_n\Psi_n)\to J_0(u_0)=c_0,
\]
a contradiction.
\end{proof}

Define
\[
\alpha(\varepsilon):=\sup_{y\in M}\bigl|J_{\varepsilon}(\Phi_{\varepsilon,y})-c_0\bigr|.
\]
Then
\[
\alpha(\varepsilon)\to0
\qquad \text{as }\varepsilon\to0.
\]
Set
\[
\widetilde{\mathcal N}_{\varepsilon}
=
\bigl\{
u\in\mathcal N_{\varepsilon}:\ J_{\varepsilon}(u)\le c_0+\alpha(\varepsilon)
\bigr\}.
\]
Then \(\Phi_{\varepsilon}(M)\subset\widetilde{\mathcal N}_{\varepsilon}\) for \(\varepsilon>0\) sufficiently small.

\begin{lemma}\label{lem5.11}
The barycenter map satisfies
\[
\lim_{\varepsilon\to0}\beta\bigl(\Phi_{\varepsilon,y}\bigr)=y
\qquad \text{uniformly for } y\in M.
\]
\end{lemma}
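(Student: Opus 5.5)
The plan is the standard barycenter computation: change variables to center the test function at the origin, then use dominated convergence uniformly in \(y\in M\), with compactness of \(M\) turning the pointwise argument into a uniform one. The positive scaling factor \(t_{\varepsilon,y}\) cancels in the quotient defining \(\beta\). Setting \(x=z+\frac{y}{\varepsilon}\), we get
\[
\beta(\Phi_{\varepsilon,y})-y
=
\frac{\displaystyle\int_{\R^3}\bigl(\zeta(\varepsilon z+y)-y\bigr)\,\varphi(|\varepsilon z|)^p\,|u_0(z)|^p\,dz}
{\displaystyle\int_{\R^3}\varphi(|\varepsilon z|)^p\,|u_0(z)|^p\,dz}.
\]
Since \(y\in M\subset M_\delta\subset B_\rho(0)\), we have \(\zeta(y)=y\).

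The argument then proceeds in three steps.

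\textbf{Step 1 (denominator).} The denominator is independent of \(y\). It converges to \(\int_{\R^3}|u_0|^p\,dz>0\) as \(\varepsilon\to0\) by dominated convergence, since \(0\le\varphi\le1\) and \(u_0\in L^p\) through \(H^\alpha\hookrightarrow L^p\). Hence it is bounded below by \(\frac12\|u_0\|_p^p\) for \(\varepsilon\) small.

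\textbf{Step 2 (numerator).} The integrand vanishes unless \(|\varepsilon z|\le\delta\). On that set \(|\varepsilon z+y-y|\le\delta\), so \(\varepsilon z+y\in M_\delta\subset B_\rho(0)\) and \(\zeta(\varepsilon z+y)=\varepsilon z+y\). The integrand is therefore \(\varepsilon z\,\varphi(|\varepsilon z|)^p|u_0(z)|^p\), whose modulus is at most \(\min\{\varepsilon|z|,\delta\}\,|u_0(z)|^p\). This bound is independent of \(y\). The function \(\min\{\varepsilon|z|,\delta\}\) tends to \(0\) pointwise as \(\varepsilon\to0\) and is dominated by \(\delta|u_0|^p\in L^1\), so dominated convergence gives
\[
\sup_{y\in M}\Bigl|\int_{\R^3}\bigl(\zeta(\varepsilon z+y)-y\bigr)\varphi(|\varepsilon z|)^p|u_0(z)|^p\,dz\Bigr|
\le \int_{\R^3}\min\{\varepsilon|z|,\delta\}|u_0|^p\,dz\to0 .
\]
Should one prefer not to use the support localization, the fallback is that \(|\zeta|\le\rho\) and \(|y|\le\rho\). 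One then splits the integral into \(|z|\le R\) and \(|z|>R\). On \(|z|\le R\), \(\zeta\) is Lipschitz with constant \(1\), giving a bound \(\varepsilon R\|u_0\|_p^p\). On \(|z|>R\), the bound is \(2\rho\int_{|z|>R}|u_0|^p\), which is small for \(R\) large.

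\textbf{Step 3.} Dividing the Step 2 bound by the Step 1 lower bound yields \(\sup_{y\in M}|\beta(\Phi_{\varepsilon,y})-y|\to0\).

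The only point requiring care is uniformity in \(y\), which the estimate already handles since the dominating bound does not depend on \(y\). Positivity of \(t_{\varepsilon,y}\) comes from Lemma~\ref{lem3.3}, which is what justifies cancelling it. I do not expect any serious obstacle.
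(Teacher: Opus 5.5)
Your proof is correct and follows essentially the paper's route. You change variables to center at $y/\varepsilon$, note that the denominator tends to $\|u_0\|_p^p>0$, and dominate the numerator by a $y$-independent integrable bound using the support restriction $|\varepsilon z|\le\delta$. The differences are cosmetic. You use that $\zeta$ is the identity on $M_\delta\subset B_\rho(0)$, where the paper invokes the $1$-Lipschitz property of $\zeta$. You also get uniformity directly from the $y$-independent bound rather than by taking arbitrary sequences $(\varepsilon_n,y_n)$.
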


\begin{proof}
Fix sequences \(\varepsilon_n\to0\) and \(y_n\in M\). By the change of variables \(x=z+y_n/\varepsilon_n\),
\[
\beta\bigl(\Phi_{\varepsilon_n,y_n}\bigr)
=
\frac{\displaystyle\int_{\R^{3}}\zeta(\varepsilon_n z+y_n)\bigl|\varphi(|\varepsilon_n z|)u_0(z)\bigr|^p\,dz}
{\displaystyle\int_{\R^{3}}\bigl|\varphi(|\varepsilon_n z|)u_0(z)\bigr|^p\,dz}.
\]
Since \(y_n\in M\subset B_{\rho}(0)\), one has \(\zeta(y_n)=y_n\), and therefore
\[
\beta\bigl(\Phi_{\varepsilon_n,y_n}\bigr)-y_n
=
\frac{\displaystyle\int_{\R^{3}}\bigl(\zeta(\varepsilon_n z+y_n)-y_n\bigr)\bigl|\varphi(|\varepsilon_n z|)u_0(z)\bigr|^p\,dz}
{\displaystyle\int_{\R^{3}}\bigl|\varphi(|\varepsilon_n z|)u_0(z)\bigr|^p\,dz}.
\]
The denominator converges to \(\int_{\R^{3}}|u_0|^p\,dz>0\). Since \(\zeta\) is \(1\)-Lipschitz,
\[
|\zeta(\varepsilon_n z+y_n)-y_n|
=
|\zeta(\varepsilon_n z+y_n)-\zeta(y_n)|
\le |\varepsilon_n z|.
\]
On the support of \(\varphi(|\varepsilon_n z|)\), one has \(|\varepsilon_n z|\le\delta\), and dominated convergence yields the conclusion.
\end{proof}

\begin{lemma}\label{lem5.12}
There exists \(\varepsilon_1>0\) such that, for every \(\varepsilon\in(0,\varepsilon_1)\),
\[
\beta(u)\in M_{\delta},
\qquad \forall\, u\in\widetilde{\mathcal N}_{\varepsilon}.
\]
\end{lemma}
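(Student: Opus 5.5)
We argue by contradiction and reduce the statement to the translation-compactness result Lemma~\ref{lem4.4}. Suppose the claim fails. Then there are \(\varepsilon_n\to0^+\) and \(u_n\in\widetilde{\mathcal N}_{\varepsilon_n}\) such that
\[
\operatorname{dist}\bigl(\beta(u_n),M\bigr)>\delta \qquad \forall\, n .
\]
Since \(u_n\in\mathcal N_{\varepsilon_n}\), we have \(c_{\varepsilon_n}\le J_{\varepsilon_n}(u_n)\le c_0+\alpha(\varepsilon_n)\). Lemma~\ref{lem3.6}(iii) gives \(c_{\varepsilon_n}\to c_0\), and Lemma~\ref{lem5.10} gives \(\alpha(\varepsilon_n)\to0\). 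Hence \(J_{\varepsilon_n}(u_n)\to c_0\), and Lemma~\ref{lem4.4} applies. It yields \(\widetilde y_n\in\R^3\) such that \(v_n:=u_n(\cdot+\widetilde y_n)\to v\) strongly in \(X\), where \(v\neq0\) is a ground state of the limit problem, and \(y_n:=\varepsilon_n\widetilde y_n\to y_0\in M\).

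Next we change variables \(x=z+\widetilde y_n\) in the barycenter:
\[
\beta(u_n)=y_n+\frac{\displaystyle\int_{\R^3}\bigl(\zeta(\varepsilon_n z+y_n)-y_n\bigr)|v_n(z)|^p\,dz}{\displaystyle\int_{\R^3}|v_n(z)|^p\,dz}.
\]
The denominator converges to \(\int_{\R^3}|v|^p\,dz>0\), because \(v_n\to v\) in \(H^\alpha(\R^3)\hookrightarrow L^p(\R^3)\). For the numerator we split \(|v_n|^p\le 2^{p-1}\bigl(|v_n-v|^p+|v|^p\bigr)\) and use that \(\zeta\) is bounded by \(\rho\), so that \(|\zeta(\varepsilon_n z+y_n)-y_n|\le 2\rho\) once \(n\) is large enough that \(|y_n|\le\rho\). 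The contribution of \(|v_n-v|^p\) is then at most \(C\|v_n-v\|_p^p\to0\). For the \(|v|^p\) part, \(y_n\to y_0\in M\subset B_\rho(0)\) and \(\zeta\) is continuous, so \(\zeta(\varepsilon_n z+y_n)\to\zeta(y_0)=y_0\) pointwise. Since also \(y_n\to y_0\), the integrand tends to \(0\) pointwise, and it is dominated by \(2\rho|v|^p\in L^1\). Dominated convergence makes this term vanish as well. Hence \(\beta(u_n)-y_n\to0\), and therefore \(\beta(u_n)\to y_0\in M\).

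It follows that \(\operatorname{dist}(\beta(u_n),M)\le|\beta(u_n)-y_0|\to0\), which contradicts \(\operatorname{dist}(\beta(u_n),M)>\delta\). So there is \(\varepsilon_1>0\) with \(\beta(u)\in M_\delta\) for every \(u\in\widetilde{\mathcal N}_\varepsilon\) and every \(\varepsilon\in(0,\varepsilon_1)\). All the analytic difficulty (excluding vanishing, forcing \(y_n\) to stay bounded, and identifying the limit \(y_0\) in \(M\)) has already been handled in Lemma~\ref{lem4.4}. The only care needed here is the uniform control of the numerator, where \(v_n\) depends on \(n\); the splitting into \(v_n-v\) and \(v\) together with the boundedness of \(\zeta\) takes care of this.
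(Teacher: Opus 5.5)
Your proof is correct and follows essentially the same route as the paper: argue by contradiction, use Lemma~\ref{lem4.4} to get $v_n=u_n(\cdot+\widetilde y_n)\to v$ in $X$ and $y_n=\varepsilon_n\widetilde y_n\to y_0\in M$, then change variables in $\beta$ and pass to the limit. The paper does this last step in one line, citing boundedness of $\zeta$, a.e. convergence and strong $L^p$ convergence; your splitting of $|v_n|^p$ into $|v_n-v|^p$ and $|v|^p$ simply spells out that dominated-convergence argument.
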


\begin{proof}
Assume by contradiction that there exist \(\varepsilon_n\to0\) and \(u_n\in\widetilde{\mathcal N}_{\varepsilon_n}\) such that
\[
\beta(u_n)\notin M_{\delta}
\qquad \forall\, n.
\]
Since \(u_n\in\widetilde{\mathcal N}_{\varepsilon_n}\),
\[
c_{\varepsilon_n}\le J_{\varepsilon_n}(u_n)\le c_0+\alpha(\varepsilon_n).
\]
By Lemma~\ref{lem3.6}(iii), \(c_{\varepsilon_n}\to c_0\), so
\[
J_{\varepsilon_n}(u_n)\to c_0.
\]
Applying Lemma~\ref{lem4.4}, there exists \(\{\widetilde y_n\}\subset\R^3\) such that
\[
v_n(x):=u_n(x+\widetilde y_n)\to v
\qquad \text{strongly in }X,
\]
and
\[
y_n:=\varepsilon_n\widetilde y_n\to y_0\in M.
\]
Moreover,
\[
\beta(u_n)
=
\frac{\displaystyle\int_{\R^{3}}\zeta(\varepsilon_n z+y_n)|v_n(z)|^p\,dz}
{\displaystyle\int_{\R^{3}}|v_n(z)|^p\,dz}.
\]
By the strong convergence in \(L^p(\R^3)\),
\[
\int_{\R^{3}}|v_n|^p\,dz\to\int_{\R^{3}}|v|^p\,dz>0.
\]
Since \(\zeta\) is bounded, \(\zeta(\varepsilon_n z+y_n)\to y_0\) a.e., and \(v_n\to v\) strongly in \(L^p(\R^3)\), we obtain
\[
\beta(u_n)\to y_0\in M,
\]
contradicting \(\beta(u_n)\notin M_{\delta}\).
\end{proof}

\subsection{Proof of Theorem~\ref{thm1.1}}

\begin{lemma}\label{lem5.13}
There exists \(\varepsilon_0>0\) such that, for every \(\varepsilon\in(0,\varepsilon_0)\), the rescaled problem associated with \(J_{\varepsilon}\) has at least
\[
\operatorname{cat}_{M_{\delta}}(M)
\]
distinct positive solutions.
\end{lemma}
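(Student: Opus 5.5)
We combine the standard Cingolani--Lazzo argument with Theorem~\ref{thm5.8} applied to \(W=X_\varepsilon\), \(\mathcal S=\mathcal N_\varepsilon\), \(I=J_\varepsilon\) and \(d=c_0+\alpha(\varepsilon)\).

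\emph{Choice of \(\varepsilon_0\).} We take \(\varepsilon_0>0\) so small that four conditions hold for all \(\varepsilon\in(0,\varepsilon_0)\):
\begin{itemize}
\item[(a)] \(\alpha(\varepsilon)<\min\{\nu,\nu_1\}\), where \(\nu\) comes from Corollary~\ref{cor5.5} and \(\nu_1\) from Lemma~\ref{lem5.6};
\item[(b)] \(\Phi_\varepsilon(M)\subset\widetilde{\mathcal N}_\varepsilon\);
\item[(c)] \(\beta(u)\in M_\delta\) for every \(u\in\widetilde{\mathcal N}_\varepsilon\), by Lemma~\ref{lem5.12};
\item[(d)] \(|\beta(\Phi_\varepsilon(y))-y|<\delta\) uniformly in \(y\in M\), by Lemma~\ref{lem5.11}.
\end{itemize}
We also require \(\varepsilon_0<\varepsilon_*,\varepsilon_1\), the thresholds from Corollary~\ref{cor5.5} and Lemma~\ref{lem5.6}.

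\emph{Setting up Theorem~\ref{thm5.8}.} \(J_\varepsilon|_{\mathcal N_\varepsilon}\) is bounded below by \(c_\varepsilon>0\). By (a) and Corollary~\ref{cor5.5}, it satisfies \((PS)_c\) for all \(c\in[c_\varepsilon,c_0+\alpha(\varepsilon)]\). Theorem~\ref{thm5.8} therefore gives at least \(\operatorname{cat}_{\widetilde{\mathcal N}_\varepsilon}(\widetilde{\mathcal N}_\varepsilon)\) constrained critical points in \(\widetilde{\mathcal N}_\varepsilon\). By Lemma~\ref{lem5.1} these are critical points of \(J_\varepsilon\). Their energy is at most \(c_0+\alpha(\varepsilon)\le c_0+\nu_1\), so Lemma~\ref{lem5.6} makes them positive. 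Each \(u\) gives the pair \((u,\phi_u^\alpha)\) solving the rescaled problem.

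\emph{Category estimate.} It remains to show
\[
\operatorname{cat}_{\widetilde{\mathcal N}_\varepsilon}(\widetilde{\mathcal N}_\varepsilon)\ge\operatorname{cat}_{M_\delta}(M).
\]
By (d), the homotopy
\[
H(s,y)=(1-s)\,\beta(\Phi_\varepsilon(y))+s\,y
\]
stays within distance \(\delta\) of \(y\in M\), hence stays in \(M_\delta\) (the straight segment is contained in the closed \(\delta\)-ball around \(y\)). So \(\beta\circ\Phi_\varepsilon\) is homotopic to the inclusion \(M\hookrightarrow M_\delta\). Here \(\Phi_\varepsilon\) is continuous by Lemma~\ref{lem5.9}, and \(\beta\) is continuous on \(\mathcal N_\varepsilon\) because \(u\mapsto|u|^p\) is continuous into \(L^1\) and the denominator does not vanish on \(\mathcal N_\varepsilon\).

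Now suppose \(\widetilde{\mathcal N}_\varepsilon=A_1\cup\dots\cup A_n\) with each \(A_j\) closed and contractible in \(\widetilde{\mathcal N}_\varepsilon\) via a homotopy \(h_j\) to a point \(w_j\). Set \(B_j=\Phi_\varepsilon^{-1}(A_j)\), which is closed in \(M\), and consider \(\beta\circ h_j(t,\Phi_\varepsilon(\cdot))\) on \(B_j\). By (c), this deforms \(\beta\circ\Phi_\varepsilon|_{B_j}\) inside \(M_\delta\) to the constant \(\beta(w_j)\). Concatenating with \(H\) shows that each \(B_j\) is contractible in \(M_\delta\), so \(\operatorname{cat}_{M_\delta}(M)\le n\).

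\emph{Main obstacle.} The delicate point is distinctness together with membership: the critical points must lie in \(\widetilde{\mathcal N}_\varepsilon\), which requires the sublevel to be exactly \(c_0+\alpha(\varepsilon)\), and the \((PS)\) range must cover it, which is why we shrink \(\varepsilon\) so that \(\alpha(\varepsilon)<\nu\). We also need \(\operatorname{cat}_{M_\delta}(M)\) to be finite, which holds since \(M\) is compact by Lemma~\ref{lem2.2}. If \(\operatorname{cat}_{\widetilde{\mathcal N}_\varepsilon}(\widetilde{\mathcal N}_\varepsilon)\) is infinite, the inequality is trivial.
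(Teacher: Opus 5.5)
Your proposal is correct and follows essentially the same route as the paper. You apply Theorem~\ref{thm5.8} on $\mathcal N_\varepsilon$ with $d=c_0+\alpha(\varepsilon)$ and $\alpha(\varepsilon)<\min\{\nu,\nu_1\}$, use Corollary~\ref{cor5.5}, Lemma~\ref{lem5.1} and Lemma~\ref{lem5.6} for the $(PS)$ condition, free criticality and positivity, and bound $\operatorname{cat}_{M_\delta}(M)$ by pulling back the covering via $\Phi_\varepsilon$ and concatenating the straight-line homotopy between the inclusion $M\hookrightarrow M_\delta$ and $\beta\circ\Phi_\varepsilon$ with $\beta\circ h_j$. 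The only differences are cosmetic: your homotopy runs in the opposite direction, so it must be reversed before concatenating, and you explicitly justify the continuity of $\beta$, which the paper leaves implicit.
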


\begin{proof}
Let \(\nu>0\) and \(\varepsilon_{\mathrm{PS}}>0\) be given by Corollary~\ref{cor5.5}. Let \(\nu_1>0\) and \(\varepsilon_{\mathrm{pos}}>0\) be given by Lemma~\ref{lem5.6}, and let \(\varepsilon_{\mathrm{bar}}>0\) be given by Lemma~\ref{lem5.12}. Since
\[
\alpha(\varepsilon)\to0,
\]
after possibly reducing \(\varepsilon_{\mathrm{PS}}\), we may assume that
\[
\alpha(\varepsilon)<\min\{\nu,\nu_1\}
\qquad \forall\, \varepsilon\in(0,\varepsilon_{\mathrm{PS}}).
\]
Fix
\[
\varepsilon\in\bigl(0,\min\{\varepsilon_{\mathrm{PS}},\varepsilon_{\mathrm{bar}},\varepsilon_{\mathrm{pos}}\}\bigr).
\]
Then Corollary~\ref{cor5.5} implies that \(\left.J_{\varepsilon}\right|_{\mathcal N_{\varepsilon}}\) satisfies the \((PS)_c\) condition for every
\[
c\in[c_{\varepsilon},\,c_0+\alpha(\varepsilon)].
\]
Moreover, by Lemma~\ref{lem3.6}(ii),
\[
\inf_{\mathcal N_{\varepsilon}}J_{\varepsilon}=c_{\varepsilon}.
\]
Applying Theorem~\ref{thm5.8} with
\[
W=X_{\varepsilon},
\qquad
\mathcal S=\mathcal N_{\varepsilon},
\qquad
I=J_{\varepsilon},
\qquad
d=c_0+\alpha(\varepsilon),
\]
we obtain at least
\[
\operatorname{cat}_{\widetilde{\mathcal N}_{\varepsilon}}(\widetilde{\mathcal N}_{\varepsilon})
\]
critical points of \(\left.J_{\varepsilon}\right|_{\mathcal N_{\varepsilon}}\) in \(\widetilde{\mathcal N}_{\varepsilon}\). By Lemma~\ref{lem5.1}, these are free critical points of \(J_{\varepsilon}\), and by Lemma~\ref{lem5.6} they are positive solutions of the rescaled problem.

It remains to prove that
\[
\operatorname{cat}_{\widetilde{\mathcal N}_{\varepsilon}}(\widetilde{\mathcal N}_{\varepsilon})
\ge
\operatorname{cat}_{M_{\delta}}(M).
\]
Set
\[
m:=\operatorname{cat}_{\widetilde{\mathcal N}_{\varepsilon}}(\widetilde{\mathcal N}_{\varepsilon}),
\]
and let \(A_1,\dots,A_m\subset\widetilde{\mathcal N}_{\varepsilon}\) be closed sets, each contractible in \(\widetilde{\mathcal N}_{\varepsilon}\), such that
\[
\widetilde{\mathcal N}_{\varepsilon}=\bigcup_{i=1}^{m}A_i.
\]
Define
\[
B_i:=\Phi_{\varepsilon}^{-1}(A_i).
\]
Since \(\Phi_{\varepsilon}(M)\subset\widetilde{\mathcal N}_{\varepsilon}\),
\[
M=\bigcup_{i=1}^{m}B_i,
\]
and each \(B_i\) is closed in \(M\) by Lemma~\ref{lem5.9}.

By Lemma~\ref{lem5.11}, the map
\[
\beta\circ\Phi_{\varepsilon}:M\to M_{\delta}
\]
is homotopic in \(M_{\delta}\) to the inclusion \(i:M\hookrightarrow M_{\delta}\). Indeed, defining
\[
\eta(t,y)=(1-t)y+t\,\beta(\Phi_{\varepsilon}(y)),
\qquad (t,y)\in[0,1]\times M,
\]
we have
\[
\operatorname{dist}(\eta(t,y),M)
\le
|\eta(t,y)-y|
=
t\,|\beta(\Phi_{\varepsilon}(y))-y|.
\]
By Lemma~\ref{lem5.11}, for \(\varepsilon\) sufficiently small,
\[
|\beta(\Phi_{\varepsilon}(y))-y|<\delta
\qquad \forall\, y\in M,
\]
and hence
\[
\eta([0,1]\times M)\subset M_{\delta}.
\]

For each \(i\), let
\[
h_i:[0,1]\times A_i\to\widetilde{\mathcal N}_{\varepsilon}
\]
be a contraction of \(A_i\) in \(\widetilde{\mathcal N}_{\varepsilon}\), so that
\[
h_i(0,u)=u,
\qquad
h_i(1,u)=u_i^{*}
\]
for some fixed \(u_i^{*}\in\widetilde{\mathcal N}_{\varepsilon}\). Define
\[
H_i:[0,1]\times B_i\to M_{\delta}
\]
by
\[
H_i(t,y)=
\begin{cases}
\eta(2t,y), & 0\le t\le \frac12,\\[2mm]
\beta\bigl(h_i(2t-1,\Phi_{\varepsilon}(y))\bigr), & \frac12\le t\le1.
\end{cases}
\]
By Lemma~\ref{lem5.12}, this map is well defined, continuous, and satisfies
\[
H_i(0,y)=y,
\qquad
H_i(1,y)=\beta(u_i^{*}),
\]
where the endpoint is independent of \(y\in B_i\). Thus each \(B_i\) is contractible in \(M_{\delta}\). Therefore
\[
\operatorname{cat}_{M_{\delta}}(M)\le m
=
\operatorname{cat}_{\widetilde{\mathcal N}_{\varepsilon}}(\widetilde{\mathcal N}_{\varepsilon}).
\]
The proof is complete.
\end{proof}

\begin{proof}[Proof of Theorem~\ref{thm1.1}]
The existence statement follows from Lemma~\ref{lem5.13} after passing from the rescaled problem to the original one.

Let
\[
(v_{\varepsilon},\psi_{\varepsilon})
\]
be one of the positive solutions of the rescaled system
\[
\begin{cases}
(-\Delta)^\alpha v+V(\varepsilon x)v+\psi v=v\log v^2+|v|^{p-2}v, & \text{in }\R^3,\\
(-\Delta)^\alpha \psi=v^2, & \text{in }\R^3.
\end{cases}
\]
Define
\[
u_{\varepsilon}(x):=v_{\varepsilon}\!\left(\frac{x}{\varepsilon}\right),
\qquad
\phi_{\varepsilon}(x):=\psi_{\varepsilon}\!\left(\frac{x}{\varepsilon}\right).
\]
By the scaling property of the fractional Laplacian, \((u_{\varepsilon},\phi_{\varepsilon})\) solves the original problem \eqref{eq1.1}. Since the change of variables is invertible, distinct positive solutions of the rescaled problem yield distinct positive solutions of \eqref{eq1.1}. Hence \eqref{eq1.1} has at least
\[
\operatorname{cat}_{M_{\delta}}(M)
\]
distinct positive solutions.

It remains to prove the concentration property. Let \(\varepsilon_n\to0^+\), and let
\[
(v_n,\psi_n)
\]
be a sequence of positive solutions of the rescaled problem given by Lemma~\ref{lem5.13}, with
\[
v_n\in \widetilde{\mathcal N}_{\varepsilon_n}.
\]
Since
\[
c_{\varepsilon_n}\le J_{\varepsilon_n}(v_n)\le c_0+\alpha(\varepsilon_n),
\]
and
\[
c_{\varepsilon_n}\to c_0,
\qquad
\alpha(\varepsilon_n)\to0,
\]
we have
\[
J_{\varepsilon_n}(v_n)\to c_0.
\]
By Lemma~\ref{lem4.4}, there exists \(\{\bar y_n\}\subset\R^3\) such that, setting
\[
w_n(x):=v_n(x+\bar y_n),
\qquad
y_n:=\varepsilon_n\bar y_n,
\]
we have, up to a subsequence,
\[
w_n\to \bar u
\qquad \text{strongly in }X,
\]
where \(\bar u\neq0\) is a ground state solution of the limit problem, and
\[
y_n\to y_0\in M.
\]

By Lemma~\ref{lem4.5},
\[
\|w_n\|_{L^\infty(\R^3)}\le C
\qquad \forall\, n,
\]
and
\[
w_n(x)\to0
\qquad \text{as }|x|\to\infty
\]
uniformly in \(n\). Since \(\bar u>0\) by Lemma~\ref{lem4.3}, there exists \(x_0\in\R^3\) such that
\[
\bar u(x_0)>0.
\]
Using the equation for \(w_n\), the uniform \(L^\infty\)-bound, and \cite[Theorem~5.4]{MR2244602}, we obtain uniform H\"older bounds on bounded sets. Since \(w_n\to\bar u\) strongly in \(X\), the local compact embedding together with Arzel\`a--Ascoli implies
\[
w_n\to\bar u
\qquad \text{locally uniformly in }\R^3
\]
up to a subsequence. Hence
\[
w_n(x_0)\to \bar u(x_0)>0.
\]
Therefore there exists \(c>0\) such that
\[
\|w_n\|_{L^\infty(\R^3)}\ge w_n(x_0)\ge c
\qquad \text{for all sufficiently large } n.
\]

Let \(\widehat y_n\in\R^3\) be a global maximum point of \(w_n\). Since \(w_n(x)\to0\) uniformly as \(|x|\to\infty\), there exists \(R>0\) such that
\[
|w_n(x)|<\frac c2
\qquad \forall\, |x|\ge R,\ \forall\, n.
\]
Thus
\[
|\widehat y_n|<R
\qquad \forall\, n,
\]
and, up to a subsequence,
\[
\widehat y_n\to \widehat y
\qquad \text{for some } \widehat y\in\R^3.
\]

Set
\[
x_n:=\widehat y_n+\bar y_n.
\]
Since
\[
w_n(x)=v_n(x+\bar y_n),
\]
the point \(x_n\) is a global maximum point of \(v_n\). Define
\[
u_n(x):=v_n\!\left(\frac{x}{\varepsilon_n}\right),
\qquad
\phi_n(x):=\psi_n\!\left(\frac{x}{\varepsilon_n}\right),
\qquad
\eta_n:=\varepsilon_n x_n.
\]
Then \(\eta_n\) is a global maximum point of \(u_n\), and
\[
\eta_n
=
\varepsilon_n\widehat y_n+\varepsilon_n\bar y_n
=
\varepsilon_n\widehat y_n+y_n.
\]
Since \(\{\widehat y_n\}\) is bounded and \(\varepsilon_n\to0\),
\[
\varepsilon_n\widehat y_n\to0.
\]
Therefore
\[
\eta_n\to y_0\in M.
\]
By continuity of \(V\),
\[
V(\eta_n)\to V_0.
\]
This proves the concentration property and completes the proof of Theorem~\ref{thm1.1}.
\end{proof}

\section*{Acknowledgments}

\medskip
{\bf Funding:} This work is supported by National Natural Science Foundation of China (12301145, 12261107, 12561020) and Yunnan Fundamental Research Projects (202301AU070144, 202401AU070123).

\medskip
{\bf Author Contributions:} All the authors wrote the main manuscript text together and these authors contributed equally to this work.

\medskip
{\bf Data availability:}  Data sharing is not applicable to this article as no new data were created or analyzed in this study.

\medskip
{\bf Conflict of Interests:} The authors declare that there is no conflict of interest.

\bibliographystyle{plain}
\bibliography{reference}

\end{document}